\titleformat{\section}{\large\scshape\centering}{\thesection}{1em}{}
\titleformat{\subsection}{\normalsize\scshape}{\thesubsection}{1em}{}
\titleformat{\title}{\normalsize\scshape}{\thetitle}{1em}{}
\titleformat{\author}{\normalsize\scshape}{\theauthor}{1em}{}
\titleformat{\subsection}[block]
{\normalfont \scshape \centering}
{\thesubsection}{1em}{}
\titlespacing*{\subsection}{0pt}{\baselineskip}{\baselineskip}
\newtheorem{theorem}{Theorem}[section]
\newtheorem{thm}{Theorem}[section]
\newtheorem{corollary}[thm]{Corollary}
\newtheorem{proposition}[thm]{Proposition}
\newtheorem{lemma}[thm]{Lemma}
\theoremstyle{definition}
\theoremstyle{remark}
\newtheorem{remark}[thm]{Remark}
\newcounter{myequations}
\newcommand{\e}{\mathbf{e}}
\newcommand{\E}{\mathbb{E}}
\newcommand{\I}{\mathcal{I}}
\newcommand{\V}{\operatorname{\mathbb{V}ar}}
\newcommand{\chull}{\operatorname{chull}}
\newcommand{\Per}{\operatorname{Per}}
\newcommand{\diam}{\operatorname{diam}}
\numberwithin{equation}{section}
\title{On the convex hull of two planar random walks}
\author{Daniela Ivankovi\'c, Tomislav Kralj, Nikola Sandri\'c, Stjepan \v{S}ebek}
\begin{document}

\title{\scshape{\Large On the convex hull of two planar random walks}}
\author{\textsc{\large Daniela Ivankovi\'c\footnote{University of Zagreb, Faculty of Science, e-mail: \href{mailto:daniela.ivankovic@math.hr}{daniela.ivankovic@math.hr}}\ , Tomislav Kralj\footnote{University of Zagreb, Faculty of Science, e-mail: \href{mailto:tomislav.kralj@math.hr}{tomislav.kralj@math.hr}}\  , Nikola Sandri\'c\footnote{University of Zagreb, Faculty of Science, e-mail: \href{mailto:nikola.sandric@math.hr}{nikola.sandric@math.hr}}\ , Stjepan \v{S}ebek\footnote{University of Zagreb, Faculty of Electrical Engineering and Computing, e-mail: \href{mailto:stjepan.sebek@fer.unizg.hr}{stjepan.sebek@fer.unizg.hr}}}}
\date{}

\maketitle

\begin{abstract}
In this paper, we study the limiting behavior of the perimeter and diameter functionals of the convex hull spanned by the first $n$ steps of two planar random walks. As the main results, we obtain the strong law of large numbers and the central limit theorem for the perimeter and diameter of these random sets.
%The main results are $L^2$ approximations of these functionals. As a consequence, we conclude the Central Limit Theorems for the perimeter and the diameter of these random sets. \\ \\

\begin{comment}
\noindent \textbf{Personal website:} \ \href{https://kolosovpetro.github.io/}{kolosovpetro.github.io}\\
\noindent \textbf{ORCID:} \ \href{https://orcid.org/0000-0002-6544-8880}{0000-0002-6544-8880}\\
\end{comment}
\end{abstract}

\bigskip

\noindent 
\textbf{Keywords.} random walk, central limit theorem, strong law of large numbers, convex hull, perimeter length, diameter \\
\noindent \textbf{MSC2020.} 60G50; % Sums of independent random variables; random walks
60D05; % Geometric probability and stochastic geometry
60F05; 60F15 \\ % Central limit and other weak theorems

\tableofcontents

\setlength{\parindent}{0pt} % Remove paragraph indent
\setlength{\parskip}{0.5em} % Add space between paragraphs

\section{Introduction}

Random walks are one of the most important classes of stochastic processes. They are used as a mathematical model of many phenomena arising in finance, physics, computer science, and biology, among other fields. In this paper, we focus on the geometric properties of sets of points generated by random walks. Understanding these properties can provide valuable insights into the behavior of various systems modeled by random walks. Investigating geometric properties of random walks in Euclidean space is a topic of persistent interest (see e.g.\ \cite{rudnick1987shapes}). The interest in the convex hull of a random walk, which is a classical geometrical characteristic of the walk \cite{Snyder-Steele, Spitzer-Widom}, has recently increased significantly on several fronts; among many papers, we mention \cite{Eldan1, Eldan2, Tikhomirov, VZ, wade2015convex, wade2015convex2, mcredmond2018convex, mcredmond2017expected, McRedmond_phd, Xu_phd, CSS, CSSW}. We also refer the reader to \cite{Majumdar} for a survey of the state of the field around 2010, that includes motivation in terms of modeling the home range of roaming animals.

In many real-world scenarios, random walks are subject to drifts, which are tendencies of the walk to move in a particular direction. In this paper, we consider two independent planar random walks with drifts. We are particularly interested in the diameter and the perimeter of the convex hull spanned by the first $n$ steps of the walks. The convex hull of a set $A$ is the smallest convex set containing $A$, and its geometric properties like perimeter and diameter can provide useful information about the dispersion of the points of $A$.

\begin{figure}[h]
    \centering
    \begin{tikzpicture}[x=1pt,y=0.65pt, scale = 0.65]
        \definecolor{fillColor}{RGB}{255,255,255}
        \path[use as bounding box,fill=fillColor,fill opacity=0.00] (0,0) rectangle (505.89,505.89);
        \begin{scope}
        \path[clip] (  8.25,  8.25) rectangle (500.39,500.39);
        \definecolor{drawColor}{RGB}{248,118,109}
        
        \path[draw=red,line width= 0.8pt,line join=round] (452.04, 44.48) --
        	(437.74, 60.09) --
        	(439.07, 83.79) --
        	(428.99, 94.53) --
        	(416.22,114.90) --
        	(406.78,131.27) --
        	(421.78,149.14) --
        	(427.00,157.15) --
        	(416.85,178.76) --
        	(400.45,192.15) --
        	(427.80,208.79) --
        	(420.65,236.44) --
        	(417.47,256.44) --
        	(415.29,269.55) --
        	(429.61,268.45) --
        	(417.28,284.90) --
        	(421.83,305.92) --
        	(419.00,320.12) --
        	(428.88,338.46) --
        	(437.84,345.07) --
        	(447.01,346.59) --
        	(446.01,366.49) --
        	(452.90,385.43) --
        	(433.87,399.83) --
        	(474.26,408.01) --
        	(476.09,417.71) --
        	(474.69,428.86) --
        	(478.02,446.77) --
        	(459.40,468.22) --
        	(472.22,478.02);
        \definecolor{drawColor}{RGB}{0,191,196}
        
        \path[draw=blue,line width= 0.8pt,line join=round] (440.85, 30.62) --
        	(434.72, 31.05) --
        	(420.30, 36.75) --
        	(406.55, 36.73) --
        	(387.28, 34.53) --
        	(368.55, 38.59) --
        	(368.99, 33.23) --
        	(359.01, 31.79) --
        	(341.25, 35.23) --
        	(321.55, 40.24) --
        	(295.16, 45.79) --
        	(284.21, 49.91) --
        	(260.97, 51.01) --
        	(240.23, 61.31) --
        	(221.90, 59.96) --
        	(223.88, 71.65) --
        	(223.44, 76.00) --
        	(233.01, 79.25) --
        	(218.20, 81.51) --
        	(227.86, 82.07) --
        	(206.65, 94.84) --
        	(201.95,102.73) --
        	(158.55,121.33) --
        	(150.06,133.21) --
        	(131.40,135.23) --
        	(108.74,147.17) --
        	( 67.60,155.49) --
        	( 62.28,165.24) --
        	( 36.07,171.12) --
        	( 30.62,180.27);
        \definecolor{drawColor}{RGB}{0,0,0}
        
        \path[draw=drawColor,line width= 0.8pt,line join=round] (452.04, 44.48) --
        	(440.85, 30.62) --
        	(359.01, 31.79) --
        	(341.25, 35.23) --
        	(260.97, 51.01) --
        	(221.90, 59.96) --
        	( 36.07,171.12) --
        	( 30.62,180.27) --
        	(472.22,478.02) --
        	(478.02,446.77) --
        	cycle;
        \end{scope}
    \end{tikzpicture}
    \caption{The convex hull of two independent planar random walks.}
    \label{fig:sketch}
\end{figure}
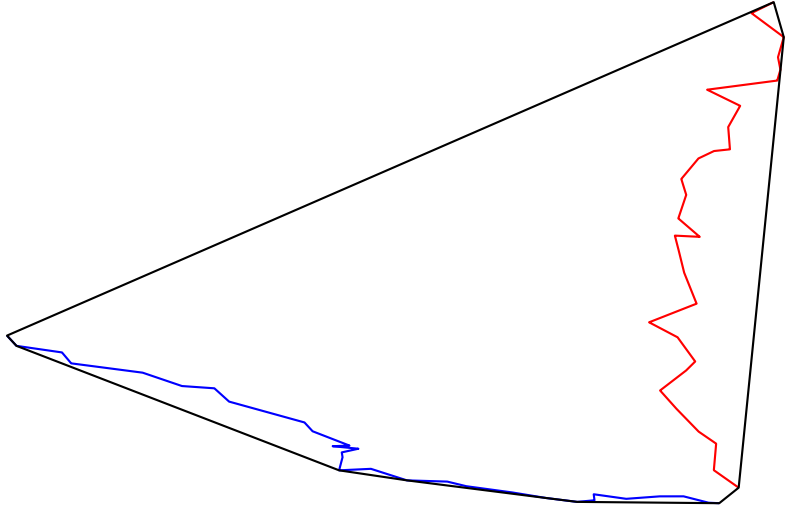

The central question we aim to address is whether we observe approximate normality of centered and scaled perimeter and diameter functionals of the random walks. Understanding these questions is of interest even from an applied point of view, not just theoretical. For instance, a question that ecologists often face, particularly in designing a conservation area to preserve a given animal population \cite{murphy1992integrating}, is how to estimate the home range of this animal population. Different methods are used to estimate this territory, based on monitoring the animals' positions \cite{giuggioli2006theory, worton1995convex}. One of these consists of simply the minimum convex polygon enclosing all monitored positions, that is, the convex hull. While this may seem simple-minded, it remains, under certain circumstances, the best way to proceed \cite{boyle2008home}. Even though the cited papers are all dealing with modeling the home-range of one roaming animal, having this particular application in mind, convex hulls of multiple random walks or Brownian motions have attracted more attention recently (see, e.g.~\cite{rfmc,dewenter2016convex,Majumdar,randon2021convex,randon2012convex}), and this is precisely the direction we take in this work.

Let $(Z_i^{(1)})_{i = 1}^{\infty}$ and $(Z_i^{(2)})_{i = 1}^{\infty}$ be two sequences of independent and identically distributed planar random vectors, which are mutually independent, but not necessarily identically distributed. Let also $(S_n^{(1)})_{n = 0}^{\infty}$ and $(S_n^{(2)})_{n = 0}^{\infty}$ be the corresponding random walks:
\begin{align*}
    S_0^{(k)} := 0, \qquad S_n^{(k)} := \sum_{i = 1}^n Z_i^{(k)},\qquad k=1,2. 
\end{align*}

The main objects we focus on in this paper are the perimeter process
\begin{align*}
    L_n := \Per \left( \text{chull} \left\{ S_j^{(k)} : 0 \leq j \leq n, \ k = 1, 2 \right\} \right), \quad n \ge 0,
\end{align*}
and the diameter process
\begin{align*}
    D_n := \diam \left( \text{chull} \left\{ S_j^{(k)} : 0 \leq j \leq n, \ k = 1, 2 \right\} \right), \quad n \ge 0.
\end{align*}
Here, $\operatorname{Per}(A)$, $\operatorname{diam}(A)$, and $\text{chull}(A)$ stand, respectively, for the perimeter, the diameter, and the convex hull of the set $A \subseteq \mathbb{R}^2$. Notice that the set $\operatorname{chull} \{ S_j^{(k)} : 0 \leq j \leq n, \ k = 1, 2 \}$ is a.s.\ a polygon.

This paper relies on the techniques and ideas developed in \cite{wade2015convex, mcredmond2018convex}, where the strong law of large numbers and the central limit theorem for the perimeter and the diameter of the convex hull spanned by the first $n$ steps of a single planar random walk has been considered.

The rest of the paper is organized as follows. In Section \ref{sec:main_results}, we present and comment the main results of this paper. Section \ref{sec:SLLN} is devoted to the proof of the strong law of large numbers for the convex hull spanned by the first $n$ steps of independent random walks. In Sections \ref{sec:perimeter} and \ref{sec:diameter}, we present the proofs of the main results concerning the perimeter and the diameter of the convex hull of two independent planar random walks. In the last two sections, Sections \ref{sec:final_discussions} and \ref{sec:simulations}, we discuss possible extensions of the main results, and pose several related open problems.

\section{The main results}\label{sec:main_results}

Our first main result is the strong law of large numbers for the set $\chull \{ S_j^{(k)} : 0 \leq j \leq n, \ k = 1, 2 \}$. Assuming that sequences $(Z_i^{(k)})_{i = 1}^{\infty}$, $k \in \{1,2\}$, have finite first moment, and denoting the drift vector of the $k$-th random walk by $\boldsymbol{\mu}^{(k)} = \E[Z_1^{(k)}]$, we have the following result.

\begin{theorem}\label{tm:SLLN} In the metric space of convex and compact planar sets endowed with the Hausdorff metric, it holds that
    \begin{equation*}
        n^{-1} \chull \left\{ S_j^{(k)} : 0 \leq j \leq n, \ k = 1, 2 \right\} \xrightarrow[n \to \infty]{a.s.} \chull \{ \boldsymbol{0}, \boldsymbol{\mu}^{(1)}, \boldsymbol{\mu}^{(2)}\}.
    \end{equation*}
\end{theorem}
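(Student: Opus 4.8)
The plan is to exploit the fact that the convex-hull operation commutes with scalar multiplication and is non-expansive with respect to the Hausdorff metric. Writing $d_H$ for the Hausdorff distance, the first observation is that $n^{-1}\chull\{S_j^{(k)} : 0\le j\le n,\ k=1,2\} = \chull\{n^{-1}S_j^{(k)} : 0\le j\le n,\ k=1,2\}$, so it suffices to understand the scaled point sets. I would introduce, for each $k$, the random finite set $A_n^{(k)} := \{n^{-1}S_j^{(k)} : 0\le j\le n\}$ and the segment $\ell^{(k)} := \{t\,\boldsymbol{\mu}^{(k)} : 0\le t\le 1\}$ joining $\boldsymbol{0}$ to $\boldsymbol{\mu}^{(k)}$. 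The target is to show $d_H\bigl(A_n^{(1)}\cup A_n^{(2)},\, \ell^{(1)}\cup\ell^{(2)}\bigr) \to 0$ almost surely, after which the conclusion follows from two general facts: $d_H(\chull P, \chull Q)\le d_H(P,Q)$ for any compact $P,Q$ (non-expansiveness of $\chull$, which I would justify via the identity $\chull(Q + \delta \bar{B}) = \chull(Q)+\delta\bar{B}$ for the closed unit ball $\bar{B}$), and $\chull(\ell^{(1)}\cup\ell^{(2)}) = \chull\{\boldsymbol{0},\boldsymbol{\mu}^{(1)},\boldsymbol{\mu}^{(2)}\}$, the latter being immediate since each segment has its endpoints among $\{\boldsymbol{0},\boldsymbol{\mu}^{(1)},\boldsymbol{\mu}^{(2)}\}$ and lies inside the triangle they span.

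The crux is a uniform (functional) strong law of large numbers: for each $k$,
\begin{equation*}
\frac{1}{n}\max_{0\le j\le n}\bigl|S_j^{(k)} - j\,\boldsymbol{\mu}^{(k)}\bigr| \xrightarrow[n\to\infty]{a.s.} 0.
\end{equation*}
I would deduce this from the ordinary SLLN applied to the centered walk $T_j^{(k)} := S_j^{(k)} - j\,\boldsymbol{\mu}^{(k)} = \sum_{i=1}^j (Z_i^{(k)} - \boldsymbol{\mu}^{(k)})$, which gives $T_j^{(k)}/j \to \boldsymbol{0}$ almost surely, combined with the elementary Cesàro-type observation that whenever a real sequence $(a_j)$ satisfies $a_j/j\to 0$, one also has $n^{-1}\max_{0\le j\le n}|a_j|\to 0$: given $\varepsilon>0$, pick $N$ with $|a_j|\le\varepsilon j$ for $j\ge N$, so that $\max_{0\le j\le n}|a_j|\le \max\{\max_{j<N}|a_j|,\ \varepsilon n\}$, whence $\limsup_n n^{-1}\max_{0\le j\le n}|a_j|\le\varepsilon$.

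With the uniform SLLN in hand, the Hausdorff bookkeeping is routine. On one side, every point $n^{-1}S_j^{(k)}$ of $A_n^{(k)}$ lies within $n^{-1}\max_{0\le j\le n}\bigl|S_j^{(k)}-j\,\boldsymbol{\mu}^{(k)}\bigr|$ of the point $(j/n)\boldsymbol{\mu}^{(k)}\in\ell^{(k)}$. On the other side, any $t\,\boldsymbol{\mu}^{(k)}\in\ell^{(k)}$ is approximated by choosing $j$ with $|j/n - t|\le 1/n$, giving a point of $A_n^{(k)}$ within distance $n^{-1}\bigl(|\boldsymbol{\mu}^{(k)}| + \max_{0\le j\le n}|S_j^{(k)}-j\,\boldsymbol{\mu}^{(k)}|\bigr)$. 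Hence $d_H(A_n^{(k)},\ell^{(k)})\to 0$ a.s., and since $d_H(P_1\cup P_2, Q_1\cup Q_2)\le\max\{d_H(P_1,Q_1), d_H(P_2,Q_2)\}$, the same holds for the unions. Applying non-expansiveness of $\chull$ and identifying the limit then completes the argument.

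The main obstacle I anticipate is the uniform control over all $j\le n$ rather than merely at the endpoint $j=n$; everything else is bookkeeping. While the Cesàro reduction above handles this cleanly, one should take care that the convergence is almost sure on a single event of full measure valid for both walks simultaneously, which poses no difficulty since the two exceptional null sets may simply be unioned. It is also worth remarking that no moment assumption beyond finiteness of the first moment is needed, exactly as stated, since only the classical SLLN is invoked.
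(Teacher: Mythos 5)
Your proposal is correct and follows essentially the same route as the paper: reduce to the convergence of each scaled walk to the segment $\{t\boldsymbol{\mu}^{(k)} : t\in[0,1]\}$, combine via the inequality $\rho_H(P_1\cup P_2, Q_1\cup Q_2)\le\max\{\rho_H(P_1,Q_1),\rho_H(P_2,Q_2)\}$, and finish with the non-expansiveness (continuity) of the convex hull operation. The only difference is that the paper cites the single-walk segment convergence and the continuity of $\chull$ from the literature (Theorem 5.4 and Lemma 6.1 of the reference on functional limit theorems), whereas you prove both from scratch via the Cesàro-type uniform SLLN and the identity $\chull(Q+\delta\bar{B})=\chull(Q)+\delta\bar{B}$, which is a perfectly valid, self-contained substitute.
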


Due to continuity of the perimeter and the diameter functionals (see \cite[Lemma 5.7 and Lemma 6.7]{lo2018functional}), from Theorem \ref{tm:SLLN} it immediately follows that the processes $(L_n)_{n = 0}^{\infty}$ and $(D_n)_{n = 0}^{\infty}$ converge a.s.\ to the perimeter, respectively, the diameter of the (possibly degenerate) triangle spanned by $\boldsymbol{\mu}^{(1)}$ and $\boldsymbol{\mu}^{(2)}$, that is,
\begin{equation}\label{eq:as_kvg_Ln_Dn}
    \frac{L_n}{n} \xrightarrow[n \to \infty]{a.s.} \Per\left( \chull \{\boldsymbol{0}, \boldsymbol{\mu}^{(1)}, \boldsymbol{\mu}^{(2)}\} \right), \qquad \textnormal{and} \qquad \frac{D_n}{n} \xrightarrow[n \to \infty]{a.s.} \diam\left( \chull \{\boldsymbol{0}, \boldsymbol{\mu}^{(1)}, \boldsymbol{\mu}^{(2)}\} \right).
\end{equation}
As a consequence of Pratt's lemma \cite[Theorem 5.5]{gut2006probability}, in Corollaries \ref{cor:L1_cvg_per} and \ref{cor:L1_cvg_diam} we show that the above convergences hold also in $L^1$.
In what follows, we investigate error terms of these approximations in terms of the central limit theorem for both processes.

We now introduce some notation that will be used throughout the paper. For $\theta \in [0, 2\pi)$, we let $\e_\theta = (\cos \theta, \sin \theta)$ be the unit vector pointing in the direction corresponding to this angle. When the  sequences $(Z_i^{(k)})_{i = 1}^{\infty}$, $k \in \{1,2\}$, have finite second moment,   the associated covariance matrices are denoted by $\boldsymbol{\Sigma}^{(k)} = \mathbb{E}[(Z_1^{(k)} - \boldsymbol{\mu}^{(k)})(Z_1^{(k)} - \boldsymbol{\mu}^{(k)})^T]$. Expressing drift vectors $\boldsymbol{\mu}^{(k)},$ $k \in \{1,2\}$, in polar coordinates, we have 
$$\boldsymbol{\mu}^{(k)} = \mu^{(k)} \e_{\theta^{(k)}},$$
where $\theta^{(k)} \in [0, 2\pi)$ represents the angle between the drift vector and the positive part of the $x$-axis, and $\mu^{(k)} \geq 0$ stands for the length of the vector $\boldsymbol{\mu}^{(k)}$. Let $\theta^{(0)} \in [0, 2\pi)$ be an angle satisfying the condition 
$$
\boldsymbol{\mu}^{(1)} \cdot \e_{\theta^{(0)}} =  \boldsymbol{\mu}^{(2)} \cdot \e_{\theta^{(0)}}.
$$ 
Here, $x\cdot y$ stands for the standard scalar product of $x,y\in\mathbb{R}^2$. In an intuitive sense, $\theta^{(0)}$ is the direction along which the projections of the drift vectors are equal. We also define $\e_{\theta^{(0)}}^\perp$, the unit vector perpendicular to this common projection line, subject to the constraint that $\e_{\theta^{(0)}}^\perp \cdot \mathbf{e}_{\theta^{(1)}} \geq 0$. 

Before stating our remaining main results, we introduce and discuss an assumption which we impose on the drift vectors $\boldsymbol{\mu}^{(1)}$ and $\boldsymbol{\mu}^{(2)}$:
\begin{equation} \tag{A1} \label{eq:per_assumption}
    \boldsymbol{0} \notin \{\boldsymbol{\mu}^{(1)}, \boldsymbol{\mu}^{(2)}, \boldsymbol{\mu}^{(1)} - \boldsymbol{\mu}^{(2)}\}.
\end{equation}
In the case of a single planar zero-drift random walk, in \cite{wade2015convex2} it has been shown that the process $(L_n)_{n = 0}^{\infty}$ (with the classical central limit theorem centering and scaling) has a non-Gaussian distributional limit. Analogously, in the case of two independent planar random walks, we conjecture that if the assumption \eqref{eq:per_assumption} is not satisfied, we can again expect a non-Gaussian distributional limit. Unfortunately, we have not been able to carry out a rigorous proof of this conjecture. See Section \ref{sec:simulations} for a  computer simulation study and discussion that support the conjecture. We now present our second main result, which provides the $L^2$ approximation of the perimeter process $(L_n)_{n = 0}^{\infty}$.

\begin{theorem} \label{theorem - L2 convergence - perimeter}
Assume \eqref{eq:per_assumption}. Then,
\begin{align*} 
    n^{-1 / 2}\left|L_n-\mathbb{E}\left[L_n\right]
    -\sum_{i=1}^n \left[ (Z_i^{(1)} 
    - \boldsymbol{\mu}^{(1)}) \cdot (\e_{\theta^{(0)}}^\perp + \e_{\theta^{(1)}}) + (Z_i^{(2)} - \boldsymbol{\mu}^{(2)}) \cdot (\e_{\theta^{(2)}} - \e_{\theta^{(0)}}^\perp) \right] \right| \xrightarrow[n \to \infty]{L^2} 0.
\end{align*}
\end{theorem}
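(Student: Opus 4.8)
The plan is to represent the perimeter through the support function and reduce the claim to a statement about one-dimensional drifted walks. By Cauchy's formula, for the (a.s.\ polygonal) convex hull we have
$$L_n = \int_0^{2\pi} M_n(\theta)\,d\theta, \qquad M_n(\theta) := \max_{k\in\{1,2\}}\ \max_{0\le j\le n} S_j^{(k)}\cdot\e_\theta,$$
so that $M_n(\theta)$ is the support function of the hull and $S_j^{(k)}\cdot\e_\theta$ is a one-dimensional random walk whose increments have mean $\boldsymbol{\mu}^{(k)}\cdot\e_\theta = \mu^{(k)}\cos(\theta-\theta^{(k)})$. Theorem~\ref{tm:SLLN} suggests comparing the hull with the limit triangle $\chull\{\boldsymbol 0,\boldsymbol{\mu}^{(1)},\boldsymbol{\mu}^{(2)}\}$, whose support function is $\max\{0,\boldsymbol{\mu}^{(1)}\cdot\e_\theta,\boldsymbol{\mu}^{(2)}\cdot\e_\theta\}$. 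Accordingly I would partition $[0,2\pi)$ into the three normal cones $I_0,I_1,I_2$ of this triangle at the vertices $\boldsymbol 0,\boldsymbol{\mu}^{(1)},\boldsymbol{\mu}^{(2)}$; under \eqref{eq:per_assumption} these are genuine arcs, $I_1$ and $I_2$ share the endpoint $\theta^{(0)}$ (the normal to the edge $[\boldsymbol{\mu}^{(1)},\boldsymbol{\mu}^{(2)}]$), while the remaining endpoints of $I_1,I_2$ are the directions perpendicular to $\boldsymbol{\mu}^{(1)}$, resp.\ $\boldsymbol{\mu}^{(2)}$.

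On each arc I would replace $M_n(\theta)$ by the appropriate linear statistic. Writing $\tilde S_n^{(k)} := \sum_{i=1}^n (Z_i^{(k)}-\boldsymbol{\mu}^{(k)})$, the point is that when the projected drift $\boldsymbol{\mu}^{(k)}\cdot\e_\theta$ is positive the running maximum of the $k$-th projected walk is attained near time $n$, so $\max_{0\le j\le n}S_j^{(k)}\cdot\e_\theta \approx S_n^{(k)}\cdot\e_\theta$, whereas when it is negative the maximum is attained near time $0$ and $\max_{0\le j\le n}S_j^{(k)}\cdot\e_\theta\approx 0$. Hence on $I_1$ (where walk $1$'s endpoint dominates) $M_n(\theta)-\E[M_n(\theta)]\approx \tilde S_n^{(1)}\cdot\e_\theta$, on $I_2$ it is $\approx\tilde S_n^{(2)}\cdot\e_\theta$, and on $I_0$ it is negligible. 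Integrating the linear main terms and using $\int_a^b\e_\theta\,d\theta = \e_{b-\pi/2}-\e_{a-\pi/2}$ over the two arcs gives $\int_{I_1}\e_\theta\,d\theta = \e_{\theta^{(0)}}^\perp+\e_{\theta^{(1)}}$ and $\int_{I_2}\e_\theta\,d\theta = \e_{\theta^{(2)}}-\e_{\theta^{(0)}}^\perp$ (a short trigonometric computation using the definitions of $\theta^{(0)}$ and $\e_{\theta^{(0)}}^\perp$, and the fact that $\e_{\theta^{(0)}}^\perp$ is parallel to $\boldsymbol{\mu}^{(1)}-\boldsymbol{\mu}^{(2)}$), which produces exactly the linear combination in the statement.

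To make this rigorous I would set $e_n(\theta):= \big(M_n(\theta)-\E[M_n(\theta)]\big) - \big(\tilde S_n^{(1)}\cdot\e_\theta\,\mathbf 1_{I_1}(\theta)+\tilde S_n^{(2)}\cdot\e_\theta\,\mathbf 1_{I_2}(\theta)\big)$, so that the quantity inside the absolute value equals $\int_0^{2\pi} e_n(\theta)\,d\theta$. By Minkowski's integral inequality it suffices to show $n^{-1/2}\int_0^{2\pi}\|e_n(\theta)\|_{L^2}\,d\theta\to 0$. The per-direction error comes from two sources: the gap between the running maximum and the endpoint of the dominating walk (for positive projected drift this is the all-time maximum of a reversed negative-drift walk), and the events on which the non-dominating walk's maximum or the sub-dominant endpoint overtakes the main term. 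For both I would invoke the drifted-walk maximal estimates of \cite{wade2015convex, mcredmond2018convex}, which in the interior of each arc yield a uniform bound of the form $\|e_n(\theta)\|_{L^2}\le C\min\{\sqrt n,\,1/|\boldsymbol{\mu}^{(k)}\cdot\e_\theta|\}$.

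The main obstacle is the behavior near the endpoints of the arcs, i.e.\ the boundary layers around the directions perpendicular to $\boldsymbol{\mu}^{(1)},\boldsymbol{\mu}^{(2)}$ and around $\theta^{(0)}$, where the relevant projected drift degenerates to $0$ and the uniform maximal bound blows up like $\sqrt n$. Parametrizing by the angle $\phi$ from a boundary, the projected drift behaves like $c\,\phi$, so the bound reads $\min\{\sqrt n, 1/(c|\phi|)\}$; integrating over $\phi$ contributes $O(\log n)$ from the $I_0$--$I_1$ and $I_0$--$I_2$ boundaries (where a single walk equilibrates) and $O(1)$ from the $I_1$--$I_2$ boundary at $\theta^{(0)}$ (where the two endpoints compete and the overtaking probability is super-exponentially small off a window $\phi=O(n^{-1/2})$). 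In all cases the contribution to $\int_0^{2\pi}\|e_n(\theta)\|_{L^2}\,d\theta$ is $O(\log n)$, which vanishes after multiplication by $n^{-1/2}$. Here assumption \eqref{eq:per_assumption} is exactly what guarantees that every arc is nondegenerate and that each boundary involves a walk with strictly positive projected drift on one side, so that no zero-drift (hence non-equilibrating, $O(\sqrt n)$-fluctuating) contribution survives.
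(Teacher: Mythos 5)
Your route is genuinely different from the paper's. You work directly with the support function: Cauchy's formula over $[0,2\pi)$, a partition into the normal cones $I_0,I_1,I_2$ of the limit triangle, replacement of $M_n(\theta)$ by the endpoint of the dominating projected walk, and the evaluation $\int_{I_1}\e_\theta\,\mathrm{d}\theta=\e_{\theta^{(0)}}^\perp+\e_{\theta^{(1)}}$, $\int_{I_2}\e_\theta\,\mathrm{d}\theta=\e_{\theta^{(2)}}-\e_{\theta^{(0)}}^\perp$, which correctly identifies the limiting linear statistic. The paper never estimates $M_n(\theta)-S_n^{(k)}\cdot\e_\theta$ at all: it resamples the $i$-th increment, writes $L_n-\E[L_n]=\sum_{i=1}^n\mathcal{L}_{n,i}$ as a martingale difference array (Lemma \ref{lemma - MDS - perimeter}), identifies $\Delta_n^{(i)}(\theta)$ on a high-probability event as a projection of $Z_i^{(k)}-\tilde Z_i^{(k)}$ (Proposition \ref{proposition - control of delta_n_i - perimeter}), and only ever needs the crude domination of Lemma \ref{lemma - omedjenost integrabilnom varijablom} by $\|Z_i^{(1)}\|+\|\tilde Z_i^{(1)}\|+\|Z_i^{(2)}\|+\|\tilde Z_i^{(2)}\|$, after which orthogonality of the $W_{n,i}$ gives the $L^2$ statement. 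Your approach, if it closed, would be shorter and more transparent about where the vector $\e_{\theta^{(0)}}^\perp+\e_{\theta^{(1)}}$ comes from.

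It does not close as written, and the obstruction is the asserted uniform bound $\|e_n(\theta)\|_{L^2}\le C\min\{\sqrt n,\,1/|\boldsymbol{\mu}^{(k)}\cdot\e_\theta|\}$. On the interior of $I_1$ your error is essentially $\V\bigl[\max_{0\le j\le n}S_j^{(1)}\cdot\e_\theta-S_n^{(1)}\cdot\e_\theta\bigr]^{1/2}$, and by time reversal $\max_{0\le j\le n}S_j^{(1)}\cdot\e_\theta-S_n^{(1)}\cdot\e_\theta$ is the running maximum up to time $n$ of a one-dimensional walk with negative drift $-\boldsymbol{\mu}^{(1)}\cdot\e_\theta$. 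A second-moment bound for this quantity that is uniform in $n$ is exactly $\E[M_\infty^2]<\infty$ for the all-time maximum of a negative-drift walk, and by the Kiefer--Wolfowitz theorem that holds if and only if the increments have a finite \emph{third} moment of the positive part. The theorem is intended under finite second moments (that is all Lemma \ref{sum goes to 0} assumes, and all the martingale route needs), and under that hypothesis your uniform bound is false; the best available pointwise statement is $\E\bigl[(\max_{0\le j\le n}\hat T_j)^2\bigr]=o(n)$, which itself needs a truncation argument you have not supplied, and which forces you to rerun the final step by dominated convergence in $\theta$ rather than by integrating an $n$-free majorant. The same issue recurs on $I_0$ and in your treatment of the non-dominating walk. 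So either add an explicit third-moment hypothesis and invoke Kiefer--Wolfowitz (proving a weaker theorem), or replace the uniform estimate by a proved pointwise $o(\sqrt n)$ bound; as it stands, the key quantitative step is unjustified, and avoiding precisely this moment obstruction is the reason the paper uses the resampling construction.
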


Intuitively, according to Theorem \ref{tm:SLLN}, the set $\operatorname{chull} \{ S_j^{(k)} : 0 \leq j \leq n, \ k = 1, 2 \}$ can be approximated (with respect to the Hausdorff metric) by the scaled (possibly degenerate) triangle spanned by the drift vectors $\boldsymbol{\mu}^{(1)}$ and $\boldsymbol{\mu}^{(2)}$. Theorem \ref{theorem - L2 convergence - perimeter} analyses the error of this approximation, which is decomposed into three parts. The parts $\sum_{i=1}^n (Z_i^{(k)} - \boldsymbol{\mu}^{(k)}) \cdot \e_{\theta^{(k)}}$, $k\in\{1,2\},$  represent the deviation in the direction of the corresponding drift vectors $\boldsymbol{\mu}^{(k)}$, and the remaining expression, 
$$
\sum_{i=1}^n ((Z_i^{(1)} - \boldsymbol{\mu}^{(1)}) -( Z_i^{(2)} - \boldsymbol{\mu}^{(2)})) \cdot \e_{\theta^{(0)}}^\perp, 
$$ 
corresponds to the deviation along the third side of the triangle, the one connecting two drift vectors. In the case of the diameter functional, in addition to assumption \eqref{eq:per_assumption}, we assume the following:
\begin{equation} \tag{A2} \label{eq:diam_assumption}
    \textnormal{the set } \{\|\boldsymbol{\mu}^{(1)}\|, \|\boldsymbol{\mu}^{(2)}\|, \|\boldsymbol{\mu}^{(1)} - \boldsymbol{\mu}^{(2)}\|\} \textnormal{ has a unique maximal element}.
\end{equation}
Here, $\|x\|$ stands for the standard Euclidean  norm of $x\in\mathbb{R}^2.$ Assumption \eqref{eq:per_assumption} is crucial for the same reason as in the case of the perimeter process, while assumption \eqref{eq:diam_assumption} allows us to identify the direction of the diameter of the set. We conjecture that, in the case of the diameter process, we again have non-Gaussian distributional limit in the case when assumptions \eqref{eq:per_assumption} or \eqref{eq:diam_assumption} are not satisfied (see Section \ref{sec:simulations} for a  computer simulation study and discussion that support the conjecture). We now state our third main result.

\begin{theorem} \label{theorem - diameter L2 convergence}
Assume \eqref{eq:per_assumption}, \eqref{eq:diam_assumption}, and that the maximal element of the set from the assumption \eqref{eq:diam_assumption} is $\|\boldsymbol{\mu}^{(1)}\|$. Then,
$$
n^{-1 / 2}\left|D_n-\mathbb{E} D_n-\left(S_n^{(1)}-\mathbb{E} S_n^{(1)}\right) \cdot \e_{\theta^{(1)}}\right|  \xrightarrow[n \to \infty]{L^2} 0. 
$$
\end{theorem}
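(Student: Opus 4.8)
The plan is to reduce the statement to two one-sided $L^2$ approximations and then assemble them. Throughout write $M=\|\boldsymbol{\mu}^{(1)}\|$, $W_n^{(k)}=S_n^{(k)}-n\boldsymbol{\mu}^{(k)}$, and $T_n=W_n^{(1)}\cdot\e_{\theta^{(1)}}$, so that the random variable in the theorem is $n^{-1/2}|D_n-\E D_n-T_n|$. I would prove: (i) $\|S_n^{(1)}\|=nM+T_n+g_n$ with $g_n\ge 0$ and $\E[g_n^2]=o(n)$; and (ii) $D_n=\|S_n^{(1)}\|+h_n$ with $h_n\ge 0$ and $\E[h_n^2]=o(n)$. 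Granting these, $D_n=nM+T_n+g_n+h_n$, and since $\E T_n=0$ we get $D_n-\E D_n-T_n=(g_n-\E g_n)+(h_n-\E h_n)$, whose $L^2$ norm is at most $\sqrt{\V(g_n)}+\sqrt{\V(h_n)}\le\sqrt{\E[g_n^2]}+\sqrt{\E[h_n^2]}=o(\sqrt n)$. Dividing by $\sqrt n$ gives the claim; in particular the centering by $\E D_n$ is harmless.

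For (i), set $a_n=S_n^{(1)}\cdot\e_{\theta^{(1)}}=nM+T_n$ and $b_n^2=\|W_n^{(1)}\|^2-T_n^2\le\|W_n^{(1)}\|^2$, so that $\|S_n^{(1)}\|=\sqrt{a_n^2+b_n^2}$. Since $\e_{\theta^{(1)}}$ is a subgradient of $\|\cdot\|$ at $n\boldsymbol{\mu}^{(1)}$, convexity gives $\|S_n^{(1)}\|\ge nM+T_n$, i.e.\ $g_n=\sqrt{a_n^2+b_n^2}-a_n\ge 0$; and on $\{a_n\ge nM/2\}$ one has the exact bound $g_n=b_n^2/(\sqrt{a_n^2+b_n^2}+a_n)\le 2b_n^2/(nM)$. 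To get $\E[g_n^2]=o(n)$ from a second moment only, I would split according to whether $|b_n|\le\delta\sqrt n$ or not: on $\{|b_n|\le\delta\sqrt n\}$ the two bounds $g_n\le 2\delta^2/M$ and $g_n\le|b_n|$ give $\E[g_n^2\mathbf{1}]\le C\delta^2\E|b_n|=O(\delta^2\sqrt n)$, while on $\{|b_n|>\delta\sqrt n\}$ the bound $g_n\le|b_n|$ and the uniform integrability of $b_n^2/n$ (which holds because $b_n^2/n$ converges in distribution with constant mean) make the contribution smaller than any fixed $\epsilon n$ once $\delta$ is large. The event $\{a_n<nM/2\}$ is absorbed the same way using the uniform integrability of $\|W_n^{(1)}\|^2/n$.

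For (ii), the lower bound $D_n\ge\|S_n^{(1)}-S_0^{(1)}\|=\|S_n^{(1)}\|$ is immediate, since $\boldsymbol{0}$ and $S_n^{(1)}$ are both hull points. For the upper bound I would use the identity $D_n=\max_\theta w_n(\theta)$, where $w_n(\theta)=\max_{j,k}S_j^{(k)}\cdot\e_\theta-\min_{j,k}S_j^{(k)}\cdot\e_\theta$ is the width of the hull in direction $\e_\theta$ (the diameter of any bounded set equals its largest width). In the mean picture this width equals that of the triangle $\chull\{\boldsymbol{0},\boldsymbol{\mu}^{(1)},\boldsymbol{\mu}^{(2)}\}$ scaled by $n$, whose largest width is its longest side; by \eqref{eq:diam_assumption} this side is unique, equals $M$, and points along $\e_{\theta^{(1)}}$. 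Hence there is $\eta=\eta(\delta)>0$ so that the mean width is at most $n(M-\eta)$ whenever $\theta$ lies at angular distance $\ge\delta$ from $\theta^{(1)}$. Since $\sup_\theta|w_n(\theta)-(\text{mean width})|\le 2\max_{j,k}\|W_j^{(k)}\|$, on the event $\{\max_{j,k}\|W_j^{(k)}\|\le\tfrac{\eta}{4}n\}$ --- whose complement has probability $o(1/n)$ by uniform integrability of $\max_{j\le n}\|W_j^{(k)}\|^2/n$ --- such far directions satisfy $w_n(\theta)<nM-\tfrac{\eta}{2}n<\|S_n^{(1)}\|\le D_n$ and therefore cannot attain the diameter. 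This confines the maximizing direction to the arc $\{|\theta-\theta^{(1)}|<\delta\}$. On that arc, \eqref{eq:per_assumption} and \eqref{eq:diam_assumption} force $0<\boldsymbol{\mu}^{(2)}\cdot\e_{\theta^{(1)}}<M=\boldsymbol{\mu}^{(1)}\cdot\e_{\theta^{(1)}}$, so for small $\delta$ both walks have strictly positive, and walk $1$ strictly the larger, drift projection onto $\e_\theta$; consequently $\max_{j,k}S_j^{(k)}\cdot\e_\theta$ is attained on walk $1$ near time $n$ and equals $S_n^{(1)}\cdot\e_\theta$ plus a tight running-maximum overshoot, and $\min_{j,k}S_j^{(k)}\cdot\e_\theta$ is attained near time $0$ and is $O_P(1)$. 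Thus $w_n(\theta)=S_n^{(1)}\cdot\e_\theta+O_P(1)$ on the arc, and maximizing over it (using $\arg S_n^{(1)}\to\theta^{(1)}$) yields $D_n=\|S_n^{(1)}\|+O_P(1)$, whence $h_n\ge 0$ with $\E[h_n^2]=o(n)$ after the bad events are bounded by $D_n\le nM+2\max_{j,k}\|W_j^{(k)}\|$ times the $o(1/n)$ probabilities.

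The step I expect to be the main obstacle is the upper bound in (ii). The transverse fluctuations of the walks are themselves of order $\sqrt n$, which is exactly the scale at which $D_n-\|S_n^{(1)}\|$ must be controlled, so no crude union bound over pairs of points or over directions can succeed. This is precisely where \eqref{eq:per_assumption} and \eqref{eq:diam_assumption} enter: the uniqueness of the longest side of the limit triangle produces a gap of \emph{linear} order $\eta n$ between the optimal direction and all others, and only such a linear gap can absorb the $\Theta(\sqrt n)$ fluctuations and pin the diameter to the segment $[\boldsymbol{0},S_n^{(1)}]$ up to tight corrections. The remaining work is bookkeeping: upgrading the various $O_P(1)$ statements and high-probability events to the $L^2$ bounds $\E[g_n^2],\E[h_n^2]=o(n)$ under a second-moment hypothesis, which I would handle throughout via uniform integrability of $\max_{j\le n}\|W_j^{(k)}\|^2/n$ and of the squared overshoot and running-minimum terms.
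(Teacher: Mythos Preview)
Your approach is genuinely different from the paper's and is essentially viable, but one step is overclaimed and the comparison is instructive.

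The paper never writes $D_n=\|S_n^{(1)}\|+h_n$. Instead it runs the same Efron--Stein martingale-difference machinery used for the perimeter: with $\mathcal{D}_{n,i}=\E[D_n-D_n^{(i)}\mid\mathcal{F}_i]$ one has $D_n-\E D_n=\sum_i\mathcal{D}_{n,i}$, and the task is to show $\mathcal{D}_{n,i}\approx V_i:=(Z_i^{(1)}-\boldsymbol{\mu}^{(1)})\cdot\e_{\theta^{(1)}}$. The localization of the maximizing direction to a $\delta$-neighbourhood of $\theta^{(1)}$ (your step too) is proved abstractly via continuity of $A\mapsto\arg\max_\theta\rho_A(\theta)$ on polygons with a unique diametral segment; then, on the good event and for $\theta$ in that arc, Proposition~\ref{proposition - control of delta_n_i - perimeter} gives $R_n(\theta)-R_n^{(i)}(\theta)=(Z_i^{(1)}-\tilde Z_i^{(1)})\cdot\e_\theta$ exactly, so $|\mathcal{D}_{n,i}-V_i|$ is bounded by $\delta(\|Z_i^{(1)}\|+\E\|Z_1^{(1)}\|)$ plus terms involving $\|Z_i^{(k)}\|,\|\tilde Z_i^{(k)}\|$ times the probability of the bad event. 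Orthogonality of the MDS then turns $\frac1n\sum_i\E[(\mathcal{D}_{n,i}-V_i)^2]\to 0$ into the $L^2$ statement. The point is that every bound lives at the level of a \emph{single increment}, so a second moment on $Z_1^{(k)}$ is used directly and no running maxima ever have to be controlled in $L^2$.

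In your route the one real gap is the claim ``$w_n(\theta)=S_n^{(1)}\cdot\e_\theta+O_P(1)$ on the arc, hence $D_n=\|S_n^{(1)}\|+O_P(1)$''. The overshoot $\max_j S_j^{(1)}\cdot\e_\theta-S_n^{(1)}\cdot\e_\theta$ and the running minimum $-m_n(\theta)$ are indeed $O_P(1)$ at each fixed $\theta$, but you are maximizing over a \emph{random} $\theta^*$ in the arc, and the $\theta$-variation of these terms over an arc of width $2\delta$ is of order $\delta\max_{j,k}\|W_j^{(k)}\|=O_P(\delta\sqrt n)$, not $O_P(1)$. Fortunately you only need $h_n=o_P(\sqrt n)$, and this \emph{does} follow by letting $\delta\downarrow 0$ after $n\to\infty$: on the good event one gets $h_n\le(\text{overshoot at }\theta^{(1)})+|m_n(\theta^{(1)})|+C\delta\max_{j,k}\|W_j^{(k)}\|$, the first two terms tight and the last $O_P(\delta\sqrt n)$. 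Combined with your domination $0\le h_n\le 3\max_{j,k}\|W_j^{(k)}\|$ and the uniform integrability of $\max_{j,k}\|W_j^{(k)}\|^2/n$, Vitali gives $\E[h_n^2]=o(n)$. That UI does hold under a bare second-moment assumption, but it is not free: you should isolate it as a lemma (a fixed-level truncation $Z=Z\mathbf 1_{|Z|\le A}+Z\mathbf 1_{|Z|>A}$, Burkholder for the bounded part, Doob for the tail part). The paper's increment-by-increment MDS bookkeeping is precisely what lets it avoid both the uniformity-in-$\theta$ issue and any appeal to UI of running maxima.
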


If the maximal element is $\|\boldsymbol{\mu}^{(2)}\|$, our proof follows in the same manner, only interchanging first and second random walk, while in the third scenario we consider the difference of the two walks and the angle $\theta^{(k)}$ is replaced by the angle corresponding to the direction of the vector $\boldsymbol{\mu}^{(1)} - \boldsymbol{\mu}^{(2)}$, see Section \ref{sec:final_discussions} for details.

In order to obtain the central limit theorem for the perimeter and diameter processes, we first have to determine the variance of the limiting normal law.

\begin{theorem} \label{theorem - variance asymptotic - perimeter}
Assume \eqref{eq:per_assumption}. Then,
$$
\lim _{n \rightarrow \infty} \frac{\V \left[L_n\right]}{n}=\sigma^2_L \in [0, \infty),
$$
where
\begin{align*}
    \sigma^2_L = &
    \E[((Z_1^{(1)} - \boldsymbol{\mu}^{(1)}) \cdot\e_{\theta^{(1)}})^2] + \E[((Z_1^{(1)} - \boldsymbol{\mu}^{(1)}) \cdot\e_{\theta^{(0)}}^\perp)^2] + 2 (\boldsymbol{\Sigma}^{(1)} \e_{\theta^{(1)}}) \cdot \e_{\theta^{(0)}}^\perp \\
     &+
    \E[((Z_1^{(2)} - \boldsymbol{\mu}^{(2)}) \cdot\e_{\theta^{(2)}})^2] + \E[((Z_1^{(2)} - \boldsymbol{\mu}^{(2)}) \cdot\e_{\theta^{(0)}}^\perp)^2] - 2(\boldsymbol{\Sigma}^{(2)} \e_{\theta^{(2)}}) \cdot \e_{\theta^{(0)}}^\perp.
\end{align*}
\end{theorem}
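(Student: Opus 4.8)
The plan is to leverage the $L^2$ approximation from Theorem \ref{theorem - L2 convergence - perimeter}, which already isolates the ``linear part'' of the centered perimeter. Write $A_n := L_n - \E[L_n]$ for the centered perimeter and introduce the approximating sum
\[
T_n := \sum_{i=1}^n \left[(Z_i^{(1)} - \boldsymbol{\mu}^{(1)}) \cdot (\e_{\theta^{(0)}}^\perp + \e_{\theta^{(1)}}) + (Z_i^{(2)} - \boldsymbol{\mu}^{(2)}) \cdot (\e_{\theta^{(2)}} - \e_{\theta^{(0)}}^\perp)\right],
\]
and set $R_n := A_n - T_n$. Theorem \ref{theorem - L2 convergence - perimeter} states precisely that $n^{-1/2}|R_n| \to 0$ in $L^2$, i.e.\ $\E[R_n^2]/n \to 0$; in particular this presupposes $L_n \in L^2$, so that $\V[L_n] = \E[A_n^2]$ is finite and the decomposition below is legitimate.

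First I would compute $\V[T_n]/n$, which turns out to be independent of $n$. The summand $W_i := (Z_i^{(1)} - \boldsymbol{\mu}^{(1)}) \cdot (\e_{\theta^{(0)}}^\perp + \e_{\theta^{(1)}}) + (Z_i^{(2)} - \boldsymbol{\mu}^{(2)}) \cdot (\e_{\theta^{(2)}} - \e_{\theta^{(0)}}^\perp)$ has mean zero, and $(W_i)_{i\ge1}$ are i.i.d.\ with finite variance (by the finite second moment hypothesis), so $\V[T_n] = n\,\V[W_1]$. Because the two walks are mutually independent, the two walk-specific pieces of $W_1$ are independent and $\V[W_1]$ splits as the sum of their variances, each a quadratic form in the corresponding covariance matrix: writing $v^{(1)} := \e_{\theta^{(0)}}^\perp + \e_{\theta^{(1)}}$ and $v^{(2)} := \e_{\theta^{(2)}} - \e_{\theta^{(0)}}^\perp$, one has $\V[W_1] = (v^{(1)})^T \boldsymbol{\Sigma}^{(1)} v^{(1)} + (v^{(2)})^T \boldsymbol{\Sigma}^{(2)} v^{(2)}$. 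Expanding these forms and using $u^T \boldsymbol{\Sigma}^{(k)} u = \E[((Z_1^{(k)} - \boldsymbol{\mu}^{(k)}) \cdot u)^2]$ together with $\e_{\theta^{(k)}}^T \boldsymbol{\Sigma}^{(k)} \e_{\theta^{(0)}}^\perp = (\boldsymbol{\Sigma}^{(k)} \e_{\theta^{(k)}}) \cdot \e_{\theta^{(0)}}^\perp$ recovers exactly the claimed $\sigma^2_L$; the cross term carries a $+$ sign for the first walk and a $-$ sign for the second, matching the signs in $v^{(1)}$ and $v^{(2)}$. This already shows $\sigma^2_L = \V[W_1] \in [0,\infty)$.

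Next I would transfer this to $\V[L_n]$. From $A_n = T_n + R_n$ and $\E[A_n] = \E[T_n] = 0$,
\[
\frac{\V[L_n]}{n} = \frac{\E[A_n^2]}{n} = \frac{\E[T_n^2]}{n} + \frac{2\,\E[T_n R_n]}{n} + \frac{\E[R_n^2]}{n}.
\]
The first term equals $\sigma^2_L$ for every $n$, the last term tends to $0$ by Theorem \ref{theorem - L2 convergence - perimeter}, and the cross term is controlled by Cauchy--Schwarz:
\[
\frac{|\E[T_n R_n]|}{n} \le \sqrt{\frac{\E[T_n^2]}{n}} \cdot \sqrt{\frac{\E[R_n^2]}{n}} = \sqrt{\sigma^2_L}\,\sqrt{\frac{\E[R_n^2]}{n}} \xrightarrow[n \to \infty]{} 0.
\]
Letting $n \to \infty$ therefore gives $\V[L_n]/n \to \sigma^2_L$, as claimed.

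There is no serious obstacle once Theorem \ref{theorem - L2 convergence - perimeter} is in hand: the argument is the standard principle that $L^2$-close random variables have asymptotically equal rescaled variances, combined with an exact variance computation for the i.i.d.\ approximating sum. The only points requiring care are the implicit $L^2$-integrability of $L_n$ (which is what makes $\V[L_n]$ and all expectations above finite) and the bookkeeping of signs when expanding the two quadratic forms, so that the covariance contributions $2(\boldsymbol{\Sigma}^{(1)}\e_{\theta^{(1)}})\cdot\e_{\theta^{(0)}}^\perp$ and $-2(\boldsymbol{\Sigma}^{(2)}\e_{\theta^{(2)}})\cdot\e_{\theta^{(0)}}^\perp$ emerge with the correct signs.
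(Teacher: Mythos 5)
Your proposal is correct and follows essentially the same route as the paper: the paper sets $\xi_n=(L_n-\E[L_n])/\sqrt{n}$ and $\zeta_n=n^{-1/2}\sum_{i=1}^n(Y_i^{(1)}+Y_i^{(2)})$ (your $A_n/\sqrt n$ and $T_n/\sqrt n$), notes $\V[\zeta_n]=\sigma_L^2$ for all $n$, and transfers the variance via the same expansion with a Cauchy--Schwarz bound on the cross term, invoking Theorem \ref{theorem - L2 convergence - perimeter} for $\E[(\xi_n-\zeta_n)^2]\to 0$. The only difference is that you spell out the quadratic-form computation $\V[W_1]=(v^{(1)})^T\boldsymbol{\Sigma}^{(1)}v^{(1)}+(v^{(2)})^T\boldsymbol{\Sigma}^{(2)}v^{(2)}=\sigma_L^2$, which the paper merely asserts.
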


It may be observed that $\sigma_L^2$ represents the variance of an individual term in the approximating sum presented in Theorem \ref{theorem - L2 convergence - perimeter}.

\begin{theorem} \label{theorem - variance asymptotic - diameter}
Assume \eqref{eq:per_assumption}, \eqref{eq:diam_assumption}, and that the maximal element of the set from assumption \eqref{eq:diam_assumption} is $\|\boldsymbol{\mu}^{(1)}\|$. Then,
$$
\lim _{n \rightarrow \infty} \frac{\V \left[D_n\right]}{n}=\sigma^2_D \in [0, \infty),
$$
where
\begin{align*}
    \sigma^2_D = \E \left[ \left(\left(Z_1^{(1)}-\boldsymbol{\mu}^{(1)}\right) \cdot \mathbf{e}_{\theta^{(1)}} \right)^2 \right].  
\end{align*}
\end{theorem}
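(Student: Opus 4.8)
The plan is to deduce the variance asymptotics directly from the $L^2$ approximation already established in Theorem \ref{theorem - diameter L2 convergence}. Write $W_n := D_n - \E D_n$ and introduce the centered random walk functional
\begin{equation*}
    T_n := \left(S_n^{(1)} - \E S_n^{(1)}\right) \cdot \e_{\theta^{(1)}} = \sum_{i=1}^n \left(Z_i^{(1)} - \boldsymbol{\mu}^{(1)}\right) \cdot \e_{\theta^{(1)}}.
\end{equation*}
First I would record that $D_n$ is square-integrable: the diameter is bounded by $2 \sum_{i=1}^n (\|Z_i^{(1)}\| + \|Z_i^{(2)}\|)$, which lies in $L^2$ by the finite-second-moment hypothesis, so $W_n \in L^2$ and $\V[D_n] = \E[W_n^2]$ is well defined. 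Both $W_n$ and $T_n$ are centered, hence so is $W_n - T_n$, and Theorem \ref{theorem - diameter L2 convergence} states precisely that $n^{-1/2}(W_n - T_n) \to 0$ in $L^2$, i.e.\ $\E[(W_n - T_n)^2]/n \to 0$.

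The decomposition $W_n = (W_n - T_n) + T_n$ then gives
\begin{equation*}
    \frac{\V[D_n]}{n} = \frac{\E[W_n^2]}{n} = \frac{\E[(W_n - T_n)^2]}{n} + \frac{2\,\E[(W_n - T_n)T_n]}{n} + \frac{\E[T_n^2]}{n}.
\end{equation*}
Since $T_n$ is a centered sum of $n$ i.i.d.\ terms, its variance is computed exactly: $\E[T_n^2] = \V[T_n] = n\,\E[((Z_1^{(1)} - \boldsymbol{\mu}^{(1)}) \cdot \e_{\theta^{(1)}})^2] = n\,\sigma_D^2$ for every $n$, so the third term equals $\sigma_D^2$ identically. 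The first term tends to $0$ by Theorem \ref{theorem - diameter L2 convergence}. For the cross term I would apply the Cauchy--Schwarz inequality,
\begin{equation*}
    \frac{|\E[(W_n - T_n)T_n]|}{n} \le \sqrt{\frac{\E[(W_n - T_n)^2]}{n}} \cdot \sqrt{\frac{\E[T_n^2]}{n}} = \sqrt{\frac{\E[(W_n - T_n)^2]}{n}} \cdot \sigma_D,
\end{equation*}
whose right-hand side vanishes as $n \to \infty$ because the first factor does and the second is constant. Combining the three limits yields $\V[D_n]/n \to \sigma_D^2$.

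Finiteness of the limit, $\sigma_D^2 \in [0,\infty)$, is immediate: $\sigma_D^2 = \E[((Z_1^{(1)} - \boldsymbol{\mu}^{(1)}) \cdot \e_{\theta^{(1)}})^2]$ is a second moment of a scalar projection of $Z_1^{(1)}$, hence finite under the standing second-moment assumption and nonnegative by construction (it may be zero precisely when the projection of $Z_1^{(1)}$ onto $\e_{\theta^{(1)}}$ is a.s.\ constant). In truth there is no substantial obstacle once Theorem \ref{theorem - diameter L2 convergence} is available; essentially all the analytical work sits in that $L^2$ approximation, and the present statement is a short corollary. The only points requiring mild care are confirming $D_n \in L^2$ so that $\V[D_n]$ and the cross-term manipulation are legitimate, and exploiting the \emph{exact} identity $\E[T_n^2] = n\sigma_D^2$ (rather than a mere asymptotic), which lets the dominant term contribute $\sigma_D^2$ with no accompanying error analysis.
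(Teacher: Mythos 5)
Your argument is correct and is essentially identical to the paper's proof: the authors set $\xi_n = (D_n - \E[D_n])/\sqrt{n}$ and $\zeta_n = n^{-1/2}\sum_{i=1}^n (Z_i^{(1)} - \boldsymbol{\mu}^{(1)})\cdot \e_{\theta^{(1)}}$ (your $W_n/\sqrt{n}$ and $T_n/\sqrt{n}$), expand $\E[\xi_n^2]$ into the same three terms, use the exact identity $\V[\zeta_n] = \sigma_D^2$, kill the first term via Theorem \ref{theorem - diameter L2 convergence}, and control the cross term by Cauchy--Schwarz. Your added remark on the square-integrability of $D_n$ is a harmless (and welcome) extra check that the paper leaves implicit.
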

If the maximal element is not $\|\boldsymbol{\mu}^{(1)}\|$, $\sigma_D^2$ is 
modified as commented above. Finally, we state the central limit theorems for both processes.

\begin{theorem} \label{theorem - CLT - perimeter}
Assume \eqref{eq:per_assumption}, and $\sigma^2_L > 0$. Then, for any $x \in \mathbb{R}$,
\begin{align*}
\lim _{n \rightarrow \infty} \mathbb{P}\left[\frac{L_n-\mathbb{E}\left[L_n\right]}{\sqrt{\mathbb{V} \operatorname{ar}\left[L_n\right]}} \leq x\right]=\lim _{n \rightarrow \infty} \mathbb{P}\left[\frac{L_n-\mathbb{E}\left[L_n\right]}{\sqrt{\sigma^2_L n}} \leq x\right]=\Phi(x),
\end{align*}
where $\Phi$ stands for the cummulative distribution function of the standard normal distribution.
\end{theorem}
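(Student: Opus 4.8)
The plan is to derive Theorem \ref{theorem - CLT - perimeter} by combining the $L^2$ approximation of Theorem \ref{theorem - L2 convergence - perimeter} with a classical central limit theorem applied to the approximating sum, and then invoking Slutsky-type arguments. Concretely, define the approximating random variable
\begin{equation*}
    T_n := \sum_{i=1}^n \left[ (Z_i^{(1)} - \boldsymbol{\mu}^{(1)}) \cdot (\e_{\theta^{(0)}}^\perp + \e_{\theta^{(1)}}) + (Z_i^{(2)} - \boldsymbol{\mu}^{(2)}) \cdot (\e_{\theta^{(2)}} - \e_{\theta^{(0)}}^\perp) \right].
\end{equation*}
Since $T_n$ is a sum of $n$ independent, mean-zero terms, and the terms arising from the first and second walk are themselves mutually independent, $T_n$ is a sum of i.i.d.\ centered random variables with finite variance equal to $\sigma_L^2$ (this is exactly the identification noted after Theorem \ref{theorem - variance asymptotic - perimeter}). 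Provided $\sigma_L^2 > 0$, the classical Lindeberg--L\'evy central limit theorem gives $n^{-1/2} T_n \xrightarrow{d} N(0, \sigma_L^2)$, or equivalently $T_n / \sqrt{\sigma_L^2 n} \xrightarrow{d} N(0,1)$.

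First I would record the decomposition $L_n - \E[L_n] = T_n + R_n$, where $R_n := L_n - \E[L_n] - T_n$ is precisely the quantity whose normalized absolute value appears in Theorem \ref{theorem - L2 convergence - perimeter}. That theorem states $n^{-1/2} |R_n| \xrightarrow{L^2} 0$, which in particular implies $n^{-1/2} R_n \xrightarrow{P} 0$. I would then write
\begin{equation*}
    \frac{L_n - \E[L_n]}{\sqrt{\sigma_L^2 n}} = \frac{T_n}{\sqrt{\sigma_L^2 n}} + \frac{R_n}{\sqrt{\sigma_L^2 n}},
\end{equation*}
and apply Slutsky's theorem: the first summand converges in distribution to $N(0,1)$ by the CLT above, while the second converges to $0$ in probability. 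Hence the left-hand side converges in distribution to $N(0,1)$, which establishes the convergence to $\Phi$ in the middle expression of the theorem.

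To complete the statement I would reconcile the two normalizations, i.e.\ show that replacing $\sqrt{\sigma_L^2 n}$ by $\sqrt{\V[L_n]}$ does not affect the limit. By Theorem \ref{theorem - variance asymptotic - perimeter}, $\V[L_n]/n \to \sigma_L^2$, so that $\sqrt{\V[L_n] / (\sigma_L^2 n)} \to 1$; since we assume $\sigma_L^2 > 0$, the ratio of the two standardizations tends to $1$, and another application of Slutsky's theorem shows that the two leftmost expressions in the theorem have the same distributional limit $\Phi$.

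The main obstacle is genuinely not in this final assembly, which is routine once the preceding results are in hand, but rather in the inputs it relies upon: the heavy lifting is in Theorem \ref{theorem - L2 convergence - perimeter} (the $L^2$ approximation, which controls the remainder $R_n$) and Theorem \ref{theorem - variance asymptotic - perimeter} (the variance asymptotics). The only subtlety to verify carefully here is that the approximating sum $T_n$ is indeed a sum of i.i.d.\ terms with variance $\sigma_L^2$, using the independence of the two walks and the fact that within each walk the increments are i.i.d.; one should confirm that the cross terms between the two walks vanish in expectation and that the within-walk variance computation reproduces the stated $\sigma_L^2$, including the cross terms $\pm 2 (\boldsymbol{\Sigma}^{(k)} \e_{\theta^{(k)}}) \cdot \e_{\theta^{(0)}}^\perp$.
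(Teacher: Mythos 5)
Your proposal is correct and follows essentially the same route as the paper: the paper likewise decomposes $L_n-\mathbb{E}[L_n]$ into the i.i.d.\ sum $\sum_{i=1}^n(Y_i^{(1)}+Y_i^{(2)})$ plus a remainder controlled by Theorem \ref{theorem - L2 convergence - perimeter}, applies the classical CLT to the sum, and uses Slutsky's theorem twice (once for the remainder, once to pass between the normalizations $\sqrt{\sigma_L^2 n}$ and $\sqrt{\V[L_n]}$ via Theorem \ref{theorem - variance asymptotic - perimeter}). The only difference is notational ($T_n$ versus the paper's $\zeta_n$), and your closing remark about verifying that the summands are i.i.d.\ with variance $\sigma_L^2$ matches the paper's observation following Theorem \ref{theorem - variance asymptotic - perimeter}.
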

In the same manner, we establish the central limit theorem for the behavior of the diameter process.

\begin{theorem} \label{theorem - CLT - diameter}
Assume \eqref{eq:per_assumption}, \eqref{eq:diam_assumption}, and $\sigma^2_D > 0$. Then, for any $x \in \mathbb{R}$,
\begin{align*}
\lim _{n \rightarrow \infty} \mathbb{P}\left[\frac{D_n-\mathbb{E}\left[D_n\right]}{\sqrt{\V \left[D_n\right]}} \leq x\right]=\lim _{n \rightarrow \infty} \mathbb{P}\left[\frac{D_n-\mathbb{E}\left[D_n\right]}{\sqrt{\sigma^2_D n}} \leq x\right]=\Phi(x).
\end{align*}
\end{theorem}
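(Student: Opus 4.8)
The plan is to deduce Theorem \ref{theorem - CLT - diameter} from the already-established $L^2$ approximation (Theorem \ref{theorem - diameter L2 convergence}) together with the variance asymptotics (Theorem \ref{theorem - variance asymptotic - diameter}), exactly as the central limit theorem for the perimeter follows from its corresponding $L^2$ approximation and variance theorem. First I would assume, without loss of generality, that the maximal element of the set in \eqref{eq:diam_assumption} is $\|\boldsymbol{\mu}^{(1)}\|$; the other two cases follow by the symmetry remarks already recorded after Theorem \ref{theorem - diameter L2 convergence}. Write
\begin{equation*}
    R_n := D_n - \mathbb{E}[D_n] - \bigl(S_n^{(1)} - \mathbb{E}[S_n^{(1)}]\bigr) \cdot \e_{\theta^{(1)}}, \qquad T_n := \bigl(S_n^{(1)} - \mathbb{E}[S_n^{(1)}]\bigr) \cdot \e_{\theta^{(1)}},
\end{equation*}
so that $D_n - \mathbb{E}[D_n] = T_n + R_n$, and Theorem \ref{theorem - diameter L2 convergence} states precisely that $n^{-1/2} R_n \to 0$ in $L^2$.

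The key observation is that $T_n$ is a sum of i.i.d.\ centered random variables: since $S_n^{(1)} = \sum_{i=1}^n Z_i^{(1)}$, we have
\begin{equation*}
    T_n = \sum_{i=1}^n \bigl(Z_i^{(1)} - \boldsymbol{\mu}^{(1)}\bigr) \cdot \e_{\theta^{(1)}},
\end{equation*}
whose summands are i.i.d.\ with mean zero and, by assumption of finite second moments, finite variance equal to $\sigma_D^2 = \E[((Z_1^{(1)} - \boldsymbol{\mu}^{(1)}) \cdot \e_{\theta^{(1)}})^2]$. Hence, provided $\sigma_D^2 > 0$, the classical Lindeberg--L\'evy central limit theorem gives $T_n / \sqrt{\sigma_D^2 n} \Rightarrow \mathcal{N}(0,1)$, i.e.\ $\mathbb{P}[T_n / \sqrt{\sigma_D^2 n} \leq x] \to \Phi(x)$ for every $x \in \mathbb{R}$. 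Next I would upgrade this to a statement about $D_n - \mathbb{E}[D_n]$ via Slutsky's theorem: since $n^{-1/2} R_n \to 0$ in $L^2$, it converges to $0$ in probability, so writing
\begin{equation*}
    \frac{D_n - \mathbb{E}[D_n]}{\sqrt{\sigma_D^2 n}} = \frac{T_n}{\sqrt{\sigma_D^2 n}} + \frac{R_n}{\sqrt{\sigma_D^2 n}},
\end{equation*}
the first term converges in distribution to $\mathcal{N}(0,1)$ and the second converges to $0$ in probability, whence their sum converges in distribution to $\mathcal{N}(0,1)$ as well. This establishes the second equality in the statement.

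Finally, I would reconcile the two normalizations, $\sqrt{\V[D_n]}$ and $\sqrt{\sigma_D^2 n}$. By Theorem \ref{theorem - variance asymptotic - diameter} we have $\V[D_n]/n \to \sigma_D^2 \in (0,\infty)$ (using $\sigma_D^2 > 0$), so the ratio $\sqrt{\V[D_n]}/\sqrt{\sigma_D^2 n} \to 1$. Applying Slutsky's theorem once more, multiplying the convergent-in-distribution sequence $(D_n - \mathbb{E}[D_n])/\sqrt{\sigma_D^2 n}$ by this deterministic factor tending to $1$ does not change the limit, yielding the first equality and completing the proof. The only genuinely delicate point is ensuring that $L^2$ convergence of the error term is strong enough for Slutsky's theorem; since $L^2$ convergence implies convergence in probability, this causes no difficulty, and the argument is otherwise a routine combination of the Lindeberg--L\'evy theorem and Slutsky's lemma. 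The real content of the result lies entirely in Theorems \ref{theorem - diameter L2 convergence} and \ref{theorem - variance asymptotic - diameter}, which are assumed here.
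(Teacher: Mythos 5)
Your proposal is correct and follows essentially the same route as the paper: the authors prove the perimeter CLT by applying the classical CLT to the i.i.d.\ sum, transferring it to $D_n-\mathbb{E}[D_n]$ via the $L^2$ approximation and Slutsky, and reconciling the two normalizations via the variance asymptotics and Slutsky again, and then state that the diameter case is identical with $\sigma_L$ replaced by $\sigma_D$ and $\zeta_n = n^{-1/2}\sum_{i=1}^n (Z_i^{(1)}-\boldsymbol{\mu}^{(1)})\cdot\e_{\theta^{(1)}}$, which is exactly your $T_n/\sqrt{n}$.
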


Notice that for $\sigma_D^2$ to be strictly positive it is sufficient, and necessary, that the variance of the projection of the first random walk onto the vector $\e_{\theta^{(1)}}$ is non-zero. When this variance is zero the walk is characterized by deterministic (rather than random) behavior along this particular direction. On the other hand, $\sigma_L^2$ will be strictly positive if, and only if, either the variance of the projection of the first random walk onto the vector $\e_{\theta^{(0)}}^\perp + \e_{\theta^{(1)}}$ is non-zero, or the variance of the projection of the second walk onto the vector $\e_{\theta^{(2)}} - \e_{\theta^{(0)}}^\perp$ is non-zero.

\section{Strong Law Of Large Numbers}\label{sec:SLLN}

In this section, we show the strong law of large numbers stated in Theorem \ref{tm:SLLN}. We show the result in a slightly more general setting, namely for arbitrarily many, say $m \ge 1$, independent random walks $(S_n^{(k)})_{n = 0}^{\infty}$, with drift vectors $\boldsymbol{\mu}^{(k)}$, $k \in \{1, 2, \ldots, m\}$, in arbitrary dimension $d \ge 1$. In particular, we show that in the metric space of convex and compact $d$-dimensional sets endowed with the Hausdorff metric, it holds that
    \begin{equation*}
        n^{-1} \chull \left\{ S_j^{(k)} : 0 \leq j \leq n, \ k = 1, \dots,m \right\} \xrightarrow[n \to \infty]{a.s.} \chull 
\left\{
 \{\boldsymbol{0}\} \cup \left\{ \boldsymbol{\mu}^{(k)}: k=1, \ldots, m \right\} \right\}.
    \end{equation*}

We first recall the definition of the Hausdorff metric. Let $(X, \rho)$ be a metric space and let $A,B\subseteq X$. The Hausdorff distance of $A$ and $B$ is defined as 
$$
\rho_H(A, B) := \max \left\{ \sup_{x \in A} \rho(x, B),\, \sup_{y \in B} \rho(y, A) \right\},
$$
or, equivalently, 
$$
\rho_H(A, B) := \inf \left\{ \varepsilon \geq 0 : A \subseteq B^{\varepsilon} \text{ and } B \subseteq A^{\varepsilon} \right\},
$$
where $\rho(x,A):=\inf_{y\in A}\rho(x,y)$ and $A^{\varepsilon} = \left\{ x \in X : \rho(x, A) \leq \varepsilon \right\}$. One can demonstrate that these two definitions are indeed equivalent (see \cite[Section 1.8]{schneider2014convex}). In order to obtain a proper metric space, we restrict $\rho_H$ to closed and bounded subsets of $X$.  In the case when $X$ is $d$-dimensional Euclidean space $\mathbb{R}^d$ equipped with the standard Euclidean distance, the corresponding Hausdorff distance is denoted by $\rho_H^d$.

\begin{proof}[Proof of Theorem \ref{tm:SLLN}]
    Observe first that for sequences $(A_n^{(k)})_{n = 0}^{\infty}$, $k \in \{1, \ldots, m\}$, of (closed) subsets in $\mathbb{R}^d$ that  converge, respectively, to (closed) sets $A^{(k)}$ (for $k = 1, \dots, m$), their union converges to the union of the limiting subsets (with respect to $\rho_H^d$). Namely, it is sufficient to prove that
    \begin{align} \label{hausdroff unije}
    \rho_H^d \left( 
    \bigcup_{k = 1}^{m} A_n^{(k)}, 
    \bigcup_{k = 1}^{m} A^{(k)}
    \right) 
    \leq 
    \max_{k \in \{1, \dots, m\}} \rho_H^d \left( 
    A_n^{(k)}, 
    A^{(k)}
    \right) . 
    \end{align} 
Let
$$
\varepsilon = \max_{k \in \{1, \dots, m\}} \rho_H^d(A_n^{(k)}, A^{(k)}).
$$
From the definition of the Hausdorff metric, we have that $A_n^{(k)} \subseteq (A^{(k)})^\varepsilon$ and $A^{(k)} \subseteq (A_n^{(k)})^\varepsilon$ holds for all $k \in \{1, \ldots, m\}$. Consequently,
\begin{align*}
\left( \bigcup_{k = 1}^{m} A^{(k)} \right)^\varepsilon
&= (A^{(1)})^\varepsilon \cup (A^{(2)})^\varepsilon \cup \ldots \cup (A^{(m)})^\varepsilon \\
&\supseteq A_n^{(1)} \cup A_n^{(2)} \cup \ldots \cup A_n^{(m)} \\
&= \bigcup_{k = 1}^{m} A_n^{(k)}.
\end{align*}
Analogously, we deduce
\begin{align*}
\left( \bigcup_{k = 1}^{m} A_n^{(k)} \right)^\varepsilon
&\supseteq \bigcup_{k = 1}^{m} A^{(k)}.
\end{align*}
Thus,
$$
\rho_H^d \left( 
\bigcup_{k = 1}^{m} A_n^{(k)}, 
\bigcup_{k = 1}^{m} A^{(k)}
\right) 
\leq \varepsilon,
$$
which thereby proves (\ref{hausdroff unije}). Next, in \cite[Theorem 5.4]{lo2018functional} it is established that for a single random walk $(S_n)_{n = 0}^{\infty}$ with drift $\boldsymbol{\mu}$,
$$
n^{-1}\left\{ S_0, S_1, \ldots, S_n \right\}\xrightarrow[n \to \infty]{a.s.}\left\{t\boldsymbol{\mu} : t\in [0, 1]\right\}.
$$ 
Hence,
$$
n^{-1}\left\{ S_j^{(k)}: \ 0 \leq j \leq n \right\} \xrightarrow[n \to \infty]{a.s.}\left\{ t \boldsymbol{\mu}^{(k)}: \ t \in [0, 1] \right\},
$$
for $k \in \{1, \ldots, m\}$, and, by applying (\ref{hausdroff unije}), we establish
$$
n^{-1}\left\{ S_j^{(k)}: \ 0 \leq j \leq n, \ k = 1, \ldots, m \right\} \xrightarrow[n \to \infty]{a.s.}\left\{ t \boldsymbol{\mu}^{(k)}: \ t \in [0, 1], \ k = 1, \ldots, m \right\}.
$$
Finally, using \cite[Lemma 6.1]{lo2018functional} the result follows.
\end{proof} 

Recall that intrinsic volumes $V_1, \ldots, V_d$ are the classical geometric functionals of $d$-dimensional convex and compact sets. It is known that $V_1$ is proportional to the mean width of the set, $V_{d-1}$  equals   one half of the surface area of the set, while $V_{d}$ is the volume of the set.
 %%%%%%%%%%%%%%%%%%
Furthermore, it is known that all these functionals are continuous mappings (with respect to the Hausdorff metric), and the $\ell$-th intrinsic volume $V_{\ell}$ is homogeneus of degree $\ell$, that is, $V_{\ell}(cA) = c^{\ell}V_{\ell}(A)$, for any $c \ge 0$, see \cite[Theorem III.1.1]{baddeley2007random} and \cite[Lemma 1.8.14]{schneider2014convex}.
As a consequence of Theorem \ref{tm:SLLN} we now conclude
\begin{equation*}
    \frac{V_{\ell}\left( \chull
\left\lbrace
S_j^{(k)}: \ 0 \leq j \leq n, \ k = 1, \dots, m
\right\rbrace  \right)}{n^{\ell}} \xrightarrow[n \to \infty]{a.s.} V_{\ell}\left( \chull
\left\{
 \{\boldsymbol{0}\} \cup \left\{ \boldsymbol{\mu}^{(k)}: k=1, \ldots, m \right\} \right\} \right).
\end{equation*}
Observe that the above limit is non-trivial if, and only if, there are at least $\ell$ linearly independent vectors in the set $\{ \boldsymbol{\mu}^{(k)}: k=1, \ldots, m \}$. From this we conclude that in the planar case we cannot expect a non-trivial limit for the area functional of the convex hull of a single random walk under $n^2$ scaling. In \cite[Corollary 2.8]{wade2015convex2} the authors show that the appropriate scaling in this case is $n^{3/2}$, and establish convergence in distribution to a non-degenerate limit.

\section{Perimeter}\label{sec:perimeter}

In this section, we discuss the limiting behavior of the perimeter process. Our proofs rely on martingale difference sequences and the Cauchy formula for the perimeter.

\subsection{Martingale difference sequence}

Let $\mathcal{F}_0 = \{ \emptyset, \Omega \}$, and
\begin{align*}
    \mathcal{F}_n = \sigma \left( S_j^{(k)} : 0 \leq j \leq n, \ k = 1, 2 \right), \qquad n \ge 1,
\end{align*}
be the information about both random walks up to time $n$. Further, let $(\tilde{Z}_i^{(1)})_{i = 1}^{\infty}$ and $(\tilde{Z}_i^{(2)})_{i = 1}^{\infty}$ be independent copies of $(Z_i^{(1)})_{i = 1}^{\infty}$ and $(Z_i^{(2)})_{i = 1}^{\infty}$, which are also mutually independent. For a fixed $i \ge 1$ the resampled random walk at time $i$ is defined by
\begin{align} \label{definicija resamplanih šetnji}
    S_j^{(k, i)} := \left\{ \begin{array}{cc}
       S_j^{(k)},& j < i,  \\
       S_j^{(k)}-Z_i^{(k)} + \tilde{Z}_i^{(k)},& j \geq i.
    \end{array} \right. 
\end{align}
The corresponding perimeter processes are given as before, 
\begin{align*}
    L_n^{(i)} := \operatorname{Per} \left( \text{chull} \left\{ S_j^{(k, i)} : 0 \leq j \leq n, \ k = 1, 2 \right\} \right). 
\end{align*}
In the following lemma we show that
\begin{align*}
    \mathcal{L}_{n, i}:=\mathbb{E}\left[L_{n}-L_{n}^{(i)} \mid \mathcal{F}_{i}\right], \qquad 1\le i \le n,
\end{align*}
is a martingale difference sequence (see \cite[p. 230]{davidson1994stochastic}).

\begin{lemma}\label{lemma - MDS - perimeter}
Let $n \in \mathbb{N}$. Then, 
\begin{itemize}
    \item[(i)] $L_{n}-\mathbb{E}\left[L_{n}\right]=\sum_{i=1}^{n} \mathcal{L}_{n, i}$, 
    \item[(ii)] $\operatorname{Var}\left[L_{n}\right]=\sum_{i=1}^{n} \mathbb{E}\left[\mathcal{L}_{n, i}^{2}\right]$, whenever the latter sum is finite.
\end{itemize}
\end{lemma}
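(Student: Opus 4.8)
The plan is to recognise $(\mathcal{L}_{n,i})_{i=1}^n$ as the difference sequence of the Doob martingale $M_i := \mathbb{E}[L_n \mid \mathcal{F}_i]$, after which part (i) becomes a telescoping sum and part (ii) the Pythagorean identity for orthogonal martingale differences. Before anything else I would record that $L_n \in L^1$, so that all the conditional expectations are well defined: the convex hull is contained in the ball of radius $R_n := \max_{j,k}\|S_j^{(k)}\|$, hence $L_n \le 2\pi R_n \le 2\pi \sum_{i=1}^n (\|Z_i^{(1)}\| + \|Z_i^{(2)}\|)$, which is integrable under the finite-moment assumptions in force.

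The crux is the conditional identity
\[
\mathbb{E}\big[L_n^{(i)} \,\big|\, \mathcal{F}_i\big] = \mathbb{E}\big[L_n \,\big|\, \mathcal{F}_{i-1}\big], \qquad 1 \le i \le n.
\]
To prove it I would first unwind the definition \eqref{definicija resamplanih šetnji}: the resampled walk is built from the increments $Z_1^{(k)}, \dots, Z_{i-1}^{(k)}, \tilde{Z}_i^{(k)}, Z_{i+1}^{(k)}, \dots, Z_n^{(k)}$, i.e.\ it is the original walk with the single increment $Z_i^{(k)}$ replaced by its independent copy $\tilde{Z}_i^{(k)}$. Thus $L_n^{(i)} = f\big((Z_j^{(k)})_{j \ne i},\, (\tilde{Z}_i^{(k)})_k\big)$ for the same measurable functional $f$ that yields $L_n = f\big((Z_j^{(k)})_j\big)$. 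Writing $\mathcal{F}_i = \sigma(Z_1^{(k)}, \dots, Z_i^{(k)} : k=1,2)$, the variables entering $L_n^{(i)}$ split into the $\mathcal{F}_{i-1}$-measurable block $(Z_j^{(k)})_{j<i}$ and the block $(\tilde{Z}_i^{(k)}, Z_{i+1}^{(k)}, \dots, Z_n^{(k)})$, which is independent of $\mathcal{F}_i$; crucially, $L_n^{(i)}$ does not involve $Z_i^{(k)}$. Conditioning on $\mathcal{F}_i$ therefore amounts to conditioning on $\mathcal{F}_{i-1}$ and integrating out the independent block, and since $\tilde{Z}_i^{(k)}$ has the same law as $Z_i^{(k)}$ and is independent of the future increments exactly as $Z_i^{(k)}$ is, this integral equals $\mathbb{E}[L_n \mid \mathcal{F}_{i-1}]$. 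Consequently, by linearity of conditional expectation,
\[
\mathcal{L}_{n,i} = \mathbb{E}[L_n \mid \mathcal{F}_i] - \mathbb{E}[L_n^{(i)} \mid \mathcal{F}_i] = M_i - M_{i-1}.
\]

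Granting this, part (i) follows by telescoping, using $M_n = \mathbb{E}[L_n \mid \mathcal{F}_n] = L_n$ (as $L_n$ is $\mathcal{F}_n$-measurable) and $M_0 = \mathbb{E}[L_n \mid \mathcal{F}_0] = \mathbb{E}[L_n]$ (as $\mathcal{F}_0$ is trivial), so that $\sum_{i=1}^n \mathcal{L}_{n,i} = M_n - M_0 = L_n - \mathbb{E}[L_n]$. The martingale-difference property is then immediate, since $\mathbb{E}[\mathcal{L}_{n,i} \mid \mathcal{F}_{i-1}] = \mathbb{E}[M_i \mid \mathcal{F}_{i-1}] - M_{i-1} = 0$ by the tower property. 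For part (ii), assume $\sum_{i} \mathbb{E}[\mathcal{L}_{n,i}^2]$ is finite, so that each $\mathcal{L}_{n,i} \in L^2$ and $L_n \in L^2$; for $i < j$ the factor $\mathcal{L}_{n,i}$ is $\mathcal{F}_{j-1}$-measurable while $\mathbb{E}[\mathcal{L}_{n,j} \mid \mathcal{F}_{j-1}] = 0$, whence $\mathbb{E}[\mathcal{L}_{n,i}\mathcal{L}_{n,j}] = \mathbb{E}\big[\mathcal{L}_{n,i}\,\mathbb{E}[\mathcal{L}_{n,j} \mid \mathcal{F}_{j-1}]\big] = 0$. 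Squaring the decomposition from (i) and discarding the vanishing cross terms yields $\operatorname{Var}[L_n] = \sum_{i=1}^n \mathbb{E}[\mathcal{L}_{n,i}^2]$.

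The only genuinely delicate step is the conditional identity: it rests on the exchangeability built into the resampling construction, and the argument must track carefully which variables are $\mathcal{F}_i$-measurable, which are independent of $\mathcal{F}_i$, and the deliberate fact that $L_n^{(i)}$ carries no dependence on $Z_i^{(k)}$. Everything afterwards is the standard Doob-martingale bookkeeping and presents no difficulty beyond the stated integrability caveats.
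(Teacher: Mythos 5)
Your proposal is correct and follows essentially the same route as the paper: both identify $\mathcal{L}_{n,i}$ as the Doob martingale difference via the key identity $\mathbb{E}[L_n^{(i)}\mid\mathcal{F}_i]=\mathbb{E}[L_n\mid\mathcal{F}_{i-1}]$, then telescope for (i) and use orthogonality of martingale differences for (ii). Your write-up merely supplies more detail on integrability and on why the resampled variable $L_n^{(i)}$ is independent of $Z_i^{(k)}$, which the paper states more briefly.
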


\begin{proof}
Observe that $L_{n}^{(i)}$ is independent of $Z_{i}^{(k)}$ for both $k\in\{1,2\}$, so that 
$$
\mathbb{E}[L_{n}^{(i)} \mid \mathcal{F}_{i}]=\mathbb{E}[L_{n}^{(i)} \mid \mathcal{F}_{i-1}]=\mathbb{E}\left[L_{n} \mid \mathcal{F}_{i-1}\right]. 
$$
Hence, $\mathcal{L}_{n,i}$ can be expressed as
\begin{align*}
\mathcal{L}_{n, i}=\mathbb{E}\left[L_{n} \mid \mathcal{F}_{i}\right]-\mathbb{E}\left[L_{n} \mid \mathcal{F}_{i-1}\right] .
\end{align*}
Summing over $1 \leq i \leq n$, we conclude $\sum_{i=1}^{n} \mathcal{L}_{n, i}=\mathbb{E}\left[L_{n} \mid \mathcal{F}_{n}\right]-\mathbb{E}\left[L_{n} \mid \mathcal{F}_{0}\right] = L_n - \mathbb{E}[L_n]$, which gives $(i)$. The claim in $(ii)$ follows from the martingale difference property of the sequence $(\mathcal{L}_{n, i})_{i = 1}^n$.
\end{proof}

\subsection{Cauchy formula for the perimeter}

One of the most important contributions to convex analysis is the Cauchy formula for the perimeter (see \cite[Theorem 6.15.]{gruber2007convex}). For $\theta \in [0, \pi]$, let us define 
\begin{align*}
    M_{n}(\theta):=\max _{\substack{0 \leq j \leq n \\ k = 1, 2}}\left(S_{j}^{(k)} \cdot \mathbf{e}_{\theta}\right), \qquad m_{n}(\theta):=\min _{\substack{0 \leq j \leq n \\ k = 1, 2}}\left(S_{j}^{(k)} \cdot \mathbf{e}_{\theta}\right). 
\end{align*}
For a given angle $\theta$, the terms $M_n(\theta)$ and $m_n(\theta)$ denote the maximal and minimal projections, respectively, of the convex hull onto a line passing through the origin and directed by the unit vector $\mathbf{e}_\theta$. Since $S_0^{(k)} = 0$, it is clear that $M_n(\theta) \geq 0$ and $m_n(\theta) \leq 0$ a.s. The Cauchy formula expresses the perimeter of the convex set in terms of $M_n(\theta)$ and $m_n(\theta)$:
\begin{align*}
    L_{n}=\int_{0}^{\pi}\left(M_{n}(\theta)-m_{n}(\theta)\right) \mathrm{d} \theta=\int_{0}^{\pi} R_{n}(\theta) \mathrm{d} \theta, 
\end{align*}
where $R_{n}(\theta):=M_{n}(\theta)-m_{n}(\theta) \geq 0$ is called the parametrized range function. Notice that the Cauchy formula for the perimeter can be equivalently stated as
\begin{equation}\label{eq:cauchy_formula_0-2pi}
    L_n = \int_0^{2\pi} M_n(\theta) d\theta.
\end{equation}
We similarly have that
\begin{align*}
    L_{n}^{(i)}=\int_{0}^{\pi}\left(M_{n}^{(i)}(\theta)-m_{n}^{(i)}(\theta)\right) \mathrm{d} \theta=\int_{0}^{\pi} R_{n}^{(i)}(\theta) \mathrm{d} \theta
\end{align*}
with $R_{n}^{(i)}(\theta)=M_{n}^{(i)}(\theta)-m_{n}^{(i)}(\theta)$ and
\begin{align*}
    M_{n}^{(i)}(\theta):=\max _{\substack{0 \leq j \leq n \\ k = 1, 2}}\left(S_{j}^{(k, i)} \cdot \mathbf{e}_{\theta}\right), \qquad m_{n}^{(i)}(\theta):=\min _{\substack{0 \leq j \leq n \\ k = 1, 2}}\left(S_{j}^{(k, i)} \cdot \mathbf{e}_{\theta}\right). 
\end{align*}
We consider the following difference
\begin{align*}
    L_{n}-L_{n}^{(i)}=\int_{0}^{\pi}\left(R_{n}(\theta)-R_{n}^{(i)}(\theta)\right) \mathrm{d} \theta=\int_{0}^{\pi} \Delta_{n}^{(i)}(\theta) \mathrm{d} \theta,
\end{align*}
where $\Delta_{n}^{(i)}(\theta) := R_{n}(\theta) - R_{n}^{(i)}(\theta)$. We define two random variables for an angle $\theta \in [0, \pi]$. The first random variable represents the last time at which the minimal projections of both the first and the second random walk are achieved. Conversely, the second random variable denotes the first time at which the maximal projections of both random walks are attained. Formally: 
\begin{align*}
    \underline{J}_{n, k}(\theta):= \max \left\{ \underset{0 \leq j \leq n}{\arg \min }\left(S_{j}^{(k)} \cdot \mathbf{e}_{\theta}\right) \right\}, \qquad \text { and } \qquad \overline{J}_{n, k}(\theta):= \min \left\{ \underset{0 \leq j \leq n}{\arg \max }\left(S_{j}^{(k)} \cdot \mathbf{e}_{\theta}\right) \right\}. 
\end{align*}
Notice that we record these time instances for each walk individually. For the resampled walks, we analogously define variables $ \underline{J}_{n, k}^{(i)}(\theta)$ and $ \overline{J}_{n, k}^{(i)}(\theta)$. We further introduce the random variables $\underline{\mathcal{I}}_n(\theta)$ and $\overline{\mathcal{I}}_n(\theta)$, which denote the indices of the random walks ($k = 1$, or $k = 2$) where the minimum and maximum projections are reached, respectively. In the event of a tie, the default choice is $k = 1$. Analogously, we define the variables $\underline{\mathcal{I}}^{(i)}_n(\theta)$ and $\overline{\mathcal{I}}^{(i)}_n(\theta)$.

Throughout the subsequent proofs, we frequently require that the variable $\Delta_n^{(i)}(\theta)$ is dominated by an integrable random variable.

\begin{lemma} \label{lemma - omedjenost integrabilnom varijablom}
For any $1 \le i \le n$, we have that 
\begin{align*}
    \sup\limits_{\theta\in[0,\pi]}|\Delta_{n}^{(i)}(\theta)| \leq 2 \left( \|Z_{i}^{(1)}\|+ \|\tilde{Z}_{i}^{(1)}\|
    + \|Z_{i}^{(2)}\|+ \|\tilde{Z}_{i}^{(2)}\|\right) . 
\end{align*}
\end{lemma}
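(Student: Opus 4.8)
The plan is to reduce the claim to the elementary observation that resampling shifts the projection of each walk by a single $\theta$-dependent constant, and then to exploit that maxima and minima are $1$-Lipschitz. First I would expand $\Delta_n^{(i)}(\theta)=R_n(\theta)-R_n^{(i)}(\theta)$ in terms of the maximal and minimal projections,
\[
\Delta_n^{(i)}(\theta)=\bigl(M_n(\theta)-M_n^{(i)}(\theta)\bigr)-\bigl(m_n(\theta)-m_n^{(i)}(\theta)\bigr),
\]
so that by the triangle inequality it suffices to bound each of $|M_n(\theta)-M_n^{(i)}(\theta)|$ and $|m_n(\theta)-m_n^{(i)}(\theta)|$, uniformly in $\theta$, by $\|Z_i^{(1)}\|+\|\tilde Z_i^{(1)}\|+\|Z_i^{(2)}\|+\|\tilde Z_i^{(2)}\|$.

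Next I would record the structural fact coming from \eqref{definicija resamplanih šetnji}: for each $k\in\{1,2\}$ we have $S_j^{(k,i)}\cdot\e_\theta=S_j^{(k)}\cdot\e_\theta$ for $j<i$, whereas $S_j^{(k,i)}\cdot\e_\theta=S_j^{(k)}\cdot\e_\theta+\delta_\theta^{(k)}$ for $j\ge i$, where $\delta_\theta^{(k)}:=(\tilde Z_i^{(k)}-Z_i^{(k)})\cdot\e_\theta$. By the Cauchy--Schwarz inequality and $\|\e_\theta\|=1$, this constant is controlled uniformly in $\theta$ by $|\delta_\theta^{(k)}|\le\|\tilde Z_i^{(k)}-Z_i^{(k)}\|\le\|Z_i^{(k)}\|+\|\tilde Z_i^{(k)}\|$.

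Then, for a single walk $k$, writing $a_k=\max_{0\le j<i}S_j^{(k)}\cdot\e_\theta$ and $b_k=\max_{i\le j\le n}S_j^{(k)}\cdot\e_\theta$, the original per-walk maximum equals $\max(a_k,b_k)$ while the resampled one equals $\max(a_k,b_k+\delta_\theta^{(k)})$; since $t\mapsto\max(a_k,t)$ is $1$-Lipschitz, the per-walk maximum changes by at most $|\delta_\theta^{(k)}|$. Taking the further maximum over $k\in\{1,2\}$ is again $1$-Lipschitz in each coordinate, which yields $|M_n(\theta)-M_n^{(i)}(\theta)|\le\max_{k}|\delta_\theta^{(k)}|\le|\delta_\theta^{(1)}|+|\delta_\theta^{(2)}|$. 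The identical argument applied to minima gives the same bound for $|m_n(\theta)-m_n^{(i)}(\theta)|$. Combining the two estimates with the bound on $|\delta_\theta^{(k)}|$ produces $|\Delta_n^{(i)}(\theta)|\le 2\bigl(\|Z_i^{(1)}\|+\|\tilde Z_i^{(1)}\|+\|Z_i^{(2)}\|+\|\tilde Z_i^{(2)}\|\bigr)$, the factor $2$ arising from adding the maximum and minimum contributions.

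I do not anticipate a genuine obstacle; the only point requiring care is the bookkeeping of the two $1$-Lipschitz reductions --- first over the time index (splitting the range at $i$, where the constant shift $\delta_\theta^{(k)}$ is introduced) and then over the walk index $k$ --- so that the shift propagates to the overall extrema without blow-up. Since the resulting right-hand side is independent of $\theta$, passing to $\sup_{\theta\in[0,\pi]}$ is immediate and completes the argument.
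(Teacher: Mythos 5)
Your proof is correct and rests on the same core mechanism as the paper's: resampling the $i$-th increment shifts all projections from time $i$ onward by a single constant bounded by $\|Z_i^{(k)}\|+\|\tilde Z_i^{(k)}\|$, and the extrema are stable ($1$-Lipschitz) under such shifts. The paper organizes this as a two-sided comparison evaluated at the argmax rather than via explicit Lipschitz reductions, but the argument is essentially identical (your intermediate bound $\max_k|\delta_\theta^{(k)}|$ is even marginally sharper than the sum the paper uses).
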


\begin{proof}
Take an arbitrary $\theta\in[0,\pi]$. By definition, we have that
\begin{align*}
    M_{n}(\theta) = S_{\overline{J}_{n, \overline{\I}_n}(\theta)}^{(\overline{\I}_n)} \cdot \e_\theta. 
\end{align*}
Thus,
\begin{align*}
    M^{(i)}_{n}(\theta) \geq S_{\overline{J}_{n,\overline{\I}_n }(\theta)}^{(\overline{\I}_n, i)} \cdot \e_\theta. 
\end{align*}
If $\overline{J}_{n, \overline{\I}_n}(\theta) < i$, then $S_{\overline{J}_{n,\overline{\I}_n}(\theta)}^{(\overline{\I}_n, i)}=S_{\overline{J}_{n,\overline{\I}_n}(\theta)}^{(\overline{\I}_n)}$, so $M^{(i)}_{n}(\theta) \geq M_{n}(\theta)$. On the other hand, if $\overline{J}_{n, \overline{\I}_n}(\theta) \geq i$, we have that
\begin{align*}
    S_{\overline{J}_{n, \overline{\I}_n}(\theta)}^{(\overline{\I}_n, i)} 
    = 
    S_{\overline{J}_{n, \overline{\I}_n}(\theta)}^{(\overline{\I}_n)} 
    - 
    (Z_i^{(\overline{\I}_n)} - \tilde{Z}_i^{(\overline{\I}_n)}),
\end{align*}
so taking a projection in the direction of $\theta$ gives us
\begin{align*}
    M_{n}^{(i)}(\theta) &\geq 
    S_{\overline{J}_{n, \overline{\I}_n}(\theta)}^{(\overline{\I}_n)} \cdot \e_\theta 
    -
    (Z_i^{(\overline{\I}_n)}- \tilde{Z}_i^{(\overline{\I}_n)})\cdot\e_\theta  \\
    &\geq M_n(\theta) - \left( \|Z_{i}^{(1)}\|+ \|\tilde{Z}_{i}^{(1)}\|
    + \|Z_{i}^{(2)}\|+ \|\tilde{Z}_{i}^{(2)}\|\right). 
\end{align*}
In both cases, we have the lower bound on $M_n^{(i)}(\theta)$ as follows
\begin{align*}
    M_{n}^{(i)}(\theta) \geq M_n (\theta) - \left( \|Z_{i}^{(1)}\|+ \|\tilde{Z}_{i}^{(1)}\|
    + \|Z_{i}^{(2)}\|+ \|\tilde{Z}_{i}^{(2)}\|\right). 
\end{align*}
Similar arguments can be applied when the original and resampled maximal projections are interchanged, thereby demonstrating that
\begin{align*}
    |M_{n}^{(i)}(\theta)-M_{n}(\theta)| \leq
      \|Z_{i}^{(1)}\|+ \|\tilde{Z}_{i}^{(1)}\|
    + \|Z_{i}^{(2)}\|+ \|\tilde{Z}_{i}^{(2)}\| .
\end{align*}
The same approach can be employed to establish an analogous upper bound on $|m_{n}^{(i)}(\theta) - m_{n}(\theta)|$. With this, the assertion of the lemma is verified.
\end{proof}

Before moving onto the next subsection, we show that the convergence in the strong law of large numbers for the perimeter process, presented in \eqref{eq:as_kvg_Ln_Dn}, holds also in $L^1$ sense.

\begin{corollary}\label{cor:L1_cvg_per}
    Under the assumptions of Theorem \ref{tm:SLLN}, we have
    \begin{equation*}
        \frac{L_n}{n} \xrightarrow[n \to \infty]{L^1} \Per \left( \chull \{\boldsymbol{0}, \boldsymbol{\mu}^{(1)}, \boldsymbol{\mu}^{(2)}\} \right).
    \end{equation*}    
\end{corollary}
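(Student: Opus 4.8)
The plan is to upgrade the almost sure convergence recorded in \eqref{eq:as_kvg_Ln_Dn} to $L^1$ convergence by invoking Pratt's lemma, exactly as announced in the text preceding the corollary. Write $\ell := \Per(\chull\{\boldsymbol{0},\boldsymbol{\mu}^{(1)},\boldsymbol{\mu}^{(2)}\})$ for the (deterministic) almost sure limit of $L_n/n$. Since this convergence is already in hand, it suffices to produce a dominating sequence $(Y_n)$ with $0 \le L_n/n \le Y_n$ that converges both almost surely and in expectation to a common finite limit; Pratt's lemma then forces $\E|L_n/n - \ell| \to 0$.

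To build such a dominating sequence I would use the Cauchy formula in the form \eqref{eq:cauchy_formula_0-2pi}, namely $L_n = \int_0^{2\pi} M_n(\theta)\,\mathrm{d}\theta$. For each $\theta$ one has $0 \le M_n(\theta) = \max_{0\le j\le n,\,k}(S_j^{(k)}\cdot\e_\theta) \le \max_{0\le j\le n,\,k}\|S_j^{(k)}\|$, whence
\[
L_n \le 2\pi \max_{\substack{0\le j\le n\\ k=1,2}}\|S_j^{(k)}\| \le 2\pi\left(\sum_{i=1}^n\|Z_i^{(1)}\| + \sum_{i=1}^n\|Z_i^{(2)}\|\right),
\]
where the second inequality follows from $\|S_j^{(k)}\| \le \sum_{i=1}^n\|Z_i^{(k)}\|$ (triangle inequality) and nonnegativity of the summands. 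Dividing by $n$ gives the candidate dominating sequence $Y_n := \tfrac{2\pi}{n}\bigl(\sum_{i=1}^n\|Z_i^{(1)}\| + \sum_{i=1}^n\|Z_i^{(2)}\|\bigr)$.

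It then remains to verify the hypotheses of Pratt's lemma. Under the finite-first-moment assumption of Theorem \ref{tm:SLLN}, the classical strong law of large numbers gives $Y_n \to Y := 2\pi(\E\|Z_1^{(1)}\| + \E\|Z_1^{(2)}\|)$ almost surely, and moreover $\E[Y_n] = Y$ for every $n$, so trivially $\E[Y_n] \to \E[Y] < \infty$. Applying Pratt's lemma to the nonnegative sequence $|L_n/n - \ell|$, which tends to $0$ almost surely and is dominated by $Y_n + \ell \to Y + \ell$ with matching expectations, yields $\E|L_n/n - \ell| \to 0$, which is the claimed $L^1$ convergence. I do not expect a serious obstacle: the only step requiring care is the domination, and the Cauchy formula reduces it to the elementary bound $M_n(\theta) \le \max_{j,k}\|S_j^{(k)}\|$, everything else being the SLLN applied to $(\|Z_i^{(k)}\|)_{i\ge1}$ together with the observation that the dominating averages have constant mean. (Equivalently, one could note that $(Y_n)$ is uniformly integrable, being a sequence of Ces\`aro averages of i.i.d.\ integrable variables, and conclude from almost sure convergence plus uniform integrability.)
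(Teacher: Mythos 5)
Your proposal is correct and follows essentially the same route as the paper: bound $L_n$ via the Cauchy formula by $2\pi\sum_{j}(\|Z_j^{(1)}\|+\|Z_j^{(2)}\|)$, note that the resulting Ces\`aro averages converge almost surely with constant (hence convergent) expectations by the strong law, and conclude with Pratt's lemma. No gaps.
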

\begin{proof}
    Using the Cauchy formula from \eqref{eq:cauchy_formula_0-2pi} we have
    \begin{align*}
        L_n = \int_0^{2\pi} M_n(\theta) d\theta \le 2\pi \max_{\substack{0 \leq j \leq n \\ k = 1, 2}} \|S_j^{(k)}\| \le 2\pi \max_{k = 1, 2} \sum_{j = 0}^{n} \|Z_j^{(k)}\| \le 2\pi \sum_{j = 0}^{n} \left( \|Z_j^{(1)}\| + \|Z_j^{(2)}\| \right).
    \end{align*}
    Since $(Z_j^{(1)})_{j = 1}^{\infty}$ and $(Z_j^{(2)})_{j = 1}^{\infty}$ are sequences of i.i.d.\ random variables, from strong law we have that a.s., $n^{-1}\sum_{j = 1}^{n}(\|Z_j^{(1)}\| + \|Z_j^{(2)}\|) \to \mathbb{E}[\|Z_1^{(1)}\| + \|Z_1^{(2)}\|] < \infty$, and clearly $\mathbb{E}[n^{-1}\sum_{j = 1}^{n}(\|Z_j^{(1)}\| + \|Z_j^{(2)}\|)] = \mathbb{E}[\|Z_1^{(1)}\| + \|Z_1^{(2)}\|]$. Hence, Pratt's lemma \cite[Theorem 5.5]{gut2006probability} implies the claim.
\end{proof}

\subsection{Control of extrema}

To make the geometric analysis of the problem a little bit more convenient, we may restrict our attention to $\theta^{(1)}, \theta^{(2)} \in [0, \pi]$ such that the projections of their corresponding drift vectors onto the $y$-axis are equal. This simplification is justifiable due to the geometric properties of the convex hull, which remain unchanged under rotation and reflection operations. After performing these coordinate transformations, we find that we are left with two mutually exclusive scenarios:
\begin{itemize} \label{GA}
    \item [(i)] The first drift vector lies in the first quadrant, while the second is in the second quadrant. The $y$-axis effectively separates the two vectors.
    \item [(ii)] Both drift vectors lie in the first quadrant, with the first vector displaying a smaller angular displacement from the $x$-axis than the second one.
\end{itemize}

The described scenarios are illustrated in Figure \ref{fig:position of the drift vectors}. It should be emphasized that while our mathematical manipulations are made to address the first scenario, they are not restrained to it. Transitioning to the second scenario does not demand substantially altering the framework.

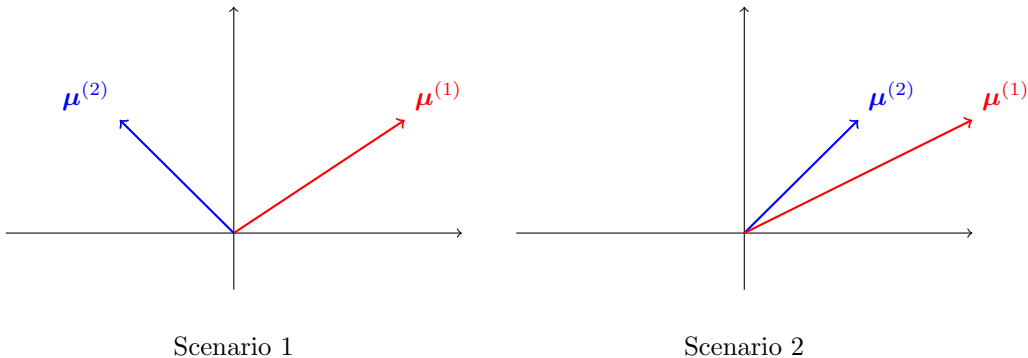
\begin{figure}[h]
    \centering
    \begin{tikzpicture}[scale=1.5]
    % First coordinate plane
    \draw[->] (-2,0) -- (2,0) node[right] {};
    \draw[->] (0,-0.5) -- (0,2) node[above] {};
    
    % First drift vector in the first quadrant
    \draw[->, red, thick] (0,0) -- (1.5,1) node[above right] {$\boldsymbol{\mu}^{(1)}$};
    
    % Second drift vector in the second quadrant
    \draw[->, blue, thick] (0,0) -- (-1,1) node[above left] {$\boldsymbol{\mu}^{(2)}$};

    % Label for the first coordinate plane
    \node at (0,-1) {Scenario 1};
\end{tikzpicture}
\quad  
\begin{tikzpicture}[scale=1.5]
    % First coordinate plane
    \draw[->] (-2,0) -- (2,0) node[right] {};
    \draw[->] (0,-0.5) -- (0,2) node[above] {};
    
    % First drift vector in the first quadrant
    \draw[->, blue, thick] (0,0) -- (1,1) node[above right] {$\boldsymbol{\mu}^{(2)}$};
    
    % Second drift vector in the second quadrant
    \draw[->, red, thick] (0,0) -- (2,1) node[above right] {$\boldsymbol{\mu}^{(1)}$};

    % Label for the first coordinate plane
    \node at (0,-1) {Scenario 2};
\end{tikzpicture}
    \caption{Possible positions of the drift vectors.}
    \label{fig:position of the drift vectors}
\end{figure}

Observe that $(S_{j}^{(k)} \cdot \mathbf{e}_{\theta})_{j = 0}^n$, $k\in \{1, 2\}$, are one-dimensional random walks with means 
$$
\boldsymbol{\mu}^{(k)} \cdot \e_\theta = \mu^{(k)} \cos(\theta^{(k)}-\theta). 
$$
For an arbitrary $\varepsilon > 0$, define the following subset of angles $\theta \in [0, \pi]$:
\begin{equation}\label{eq:def_of_Theta_sets}
    \begin{aligned}
        \Theta_{(1>2>0)}^\varepsilon & = \left\{ \theta \in [0, \pi]: \boldsymbol{\mu}^{(1)} \cdot \e_\theta - \boldsymbol{\mu}^{(2)} \cdot \e_\theta > \varepsilon, \quad \boldsymbol{\mu}^{(2)} \cdot \e_\theta > \varepsilon \right\},
        \\
        \Theta_{(2>1>0)}^\varepsilon & = \left\{ \theta \in [0, \pi]: \boldsymbol{\mu}^{(2)} \cdot \e_\theta  - \boldsymbol{\mu}^{(1)} \cdot \e_\theta > \varepsilon, \quad
        \boldsymbol{\mu}^{(1)} \cdot \e_\theta > \varepsilon \right\},
        \\
        \Theta_{(1>0>2)}^\varepsilon & = \left\{ \theta \in [0, \pi] : \boldsymbol{\mu}^{(1)} \cdot \e_\theta > \varepsilon,  \quad \boldsymbol{\mu}^{(2)} \cdot \e_\theta  < - \varepsilon  \right\},
        \\
        \Theta_{(2>0>1)}^\varepsilon & = \left\{ \theta \in [0, \pi] : \boldsymbol{\mu}^{(2)} \cdot \e_\theta > \varepsilon,  \quad \boldsymbol{\mu}^{(1)} \cdot \e_\theta  < - \varepsilon \right\},
        \\
        \Theta_{(0>2>1)}^\varepsilon &= \left\{ \theta \in [0, \pi]: \boldsymbol{\mu}^{(2)} \cdot \e_\theta  - \boldsymbol{\mu}^{(1)} \cdot \e_\theta > \varepsilon,  \ \boldsymbol{\mu}^{(2)} \cdot \e_\theta  < - \varepsilon \right\}.
    \end{aligned}
\end{equation}
We define these sets in order to divide our domain into segments where we have an information about the dominating drift vector, and positivity or negativity of the projections. In other words, we determine whether we contribute to the minimum or maximum of the projected line with each walk in each region. The subscripts in the set notations indicate what happens in each specific region.  For example, $\Theta_{(1>2>0)}^\varepsilon$ is the set of angles on which both drift vectors have a strictly positive projection (greater than some chosen $\varepsilon>0$), and the first vector has a projection that is larger for at least $\varepsilon$ than the projection of the second drift vector. On this set, with high probability, the first walk will contribute to the maximum, and the minimum will be achieved early enough. Similar reasoning can be applied to the rest of the subsets. The Figure \ref{fig:Theta areas} illustrates this division.

Because of the earlier discussion about rotations and reflections, we do not need to consider the set of angles in $[0, \pi]$ such that the projection of both walks have sufficiently negatively oriented drifts, and the projection of the first walk is sufficiently greater than the projection of the second walk.  We write 
$$\Theta^\varepsilon_{\text{I}} := \Theta_{(1>2>0)}^\varepsilon \cup \Theta_{(2>1>0)}^\varepsilon, 
\qquad  
\Theta^\varepsilon_{\text{II}} := \Theta_{(1>0>2)}^\varepsilon \cup \Theta_{(2>0>1)}^\varepsilon, 
\qquad 
\Theta^\varepsilon_{\text{III}}: = \Theta_{(0>2>1)}^\varepsilon,$$
and with $\Theta^\varepsilon$ we denote the union of these three sets.

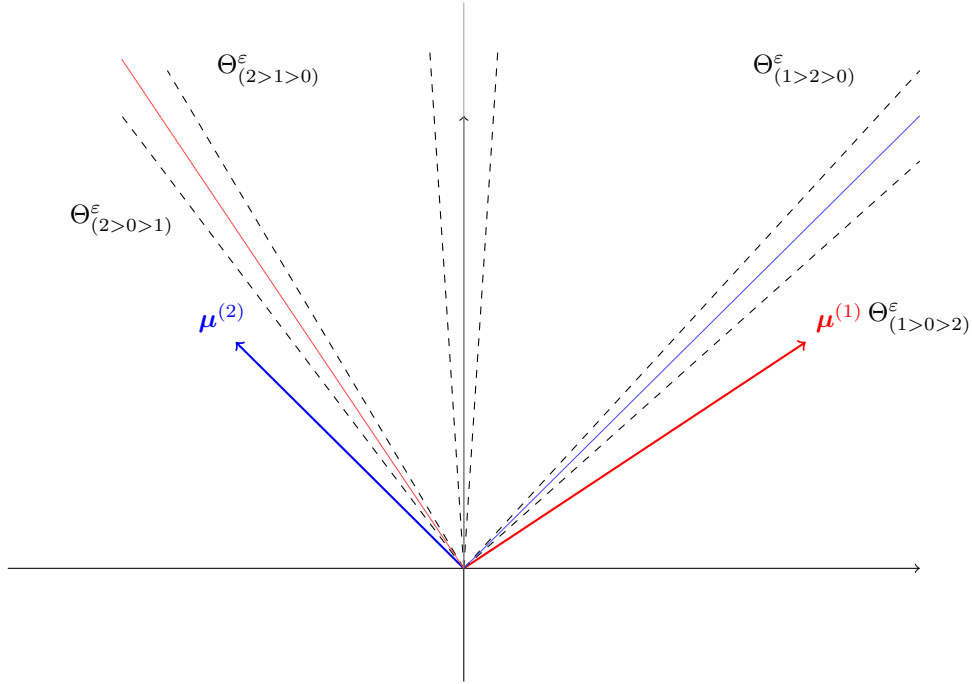
\begin{figure}[h!]
        \centering
        \begin{tikzpicture}[scale=3]
        % First coordinate plane
        \draw[->] (-2,0) -- (2,0) node[right] {};
        \draw[->] (0,-0.5) -- (0,2) node[above] {};
        
        % First drift vector in the first quadrant
        \draw[->, red, thick] (0,0) -- (1.5,1) node[above right] {$\boldsymbol{\mu}^{(1)}$};
        
        % Second drift vector in the second quadrant
        \draw[->, blue, thick] (0,0) -- (-1,1) node[above, xshift=-5pt] {$\boldsymbol{\mu}^{(2)}$};
    
        \draw[-, dashed] (0,0) -- (2,2.2) node[left, xshift=-20pt] {$\Theta_{(1>2>0)}^\varepsilon$};
        \draw[-, dashed] (0,0) -- (0.15,2.3);
    
        \draw[-, dashed] (0,0) -- (-1.3,2.2) node[right, xshift=15pt] {$\Theta_{(2>1>0)}^\varepsilon$};
        \draw[-, dashed] (0,0) -- (-0.15,2.3);
    
        \draw[-, dashed] (0,0) -- (-1.5,2) node[below, yshift=-30pt] {$\Theta_{(2>0>1)}^\varepsilon$};
    
        \draw[-, dashed] (0,0) -- (2,1.8) node[below, yshift=-50pt] 
        {$\Theta_{(1>0>2)}^\varepsilon$};

        \draw[-, color=gray!70] (0,0) -- (0, 2.5); 
        
        \draw[-, color=blue!70] (0,0) -- (2, 2); 

        \draw[-, color=red!70] (0, 0) -- (-1.5, 2.25); 
        
        % Label for the first coordinate plane
        \end{tikzpicture}
        \caption{Division of angles from $[0,\pi]$.}
        \label{fig:Theta areas}
    \end{figure}

For $\gamma \in (0, 1/2)$ and $\varepsilon > 0$ define the event $E_{n, i}(\varepsilon, \gamma)$ with the following:
\begin{itemize}
    \item for all $\theta \in \Theta^\varepsilon_\text{I}$, \ $\underline{J}_{n, \underline{\I}_n(\theta)}(\theta)<\gamma n$,  \ $\overline{J}_{n, \overline{\I}_n(\theta)}(\theta)>(1-\gamma) n$, \
     $\underline{J}^{(i)}_{n, \underline{\I}^{(i)}_n(\theta)}(\theta)< \gamma n$, \ and $\overline{J}_{n, \overline{\I}_n^{(i)}(\theta)}^{(i)}(\theta)>(1-\gamma) n$,
    \item for all $\theta \in \Theta^\varepsilon_\text{II}$, \
        $\underline{J}_{n, \underline{\I}_n(\theta)}(\theta)>(1-\gamma) n$, 
          \
          $\overline{J}_{n, \overline{\I}_n(\theta)}(\theta)>(1-\gamma) n$, 
          \
          $\underline{J}^{(i)}_{n, \underline{\I}^{(i)}_n(\theta)}(\theta)>(1-\gamma) n$, \ and $\overline{J}_{n, \overline{\I}_n^{(i)}(\theta)}^{(i)}(\theta)>(1-\gamma) n$,
    
    \item for all $\theta \in \Theta^\varepsilon_\text{III}$, \
        $\underline{J}_{n, \underline{\I}_n(\theta)}(\theta)>(1-\gamma) n$, \ 
        $\overline{J}_{n, \overline{\I}_n(\theta)}(\theta)<\gamma n$, \ $\underline{J}^{(i)}_{n, \underline{\I}^{(i)}_n(\theta)}(\theta)>(1-\gamma) n$, \ 
        and $\overline{J}_{n, \overline{\I}_n^{(i)}(\theta)}^{(i)}(\theta)< \gamma n$, 
    
    \item for all $\theta \in \Theta^\varepsilon_{(1>2>0)}$,  \ $\overline{\I}_n(\theta) = \overline{\I}_n^{(i)}(\theta) = 1$,

    \item for all $\theta \in \Theta^\varepsilon_{(2>1>0)}$,  \ $\overline{\I}_n(\theta) = \overline{\I}_n^{(i)}(\theta) = 2$,

    \item for all $\theta \in \Theta^\varepsilon_{(1>0>2)}$,  \ $\overline{\I}_n(\theta) = \overline{\I}_n^{(i)}(\theta) = 1$, and $\underline{\I}_n(\theta) = \underline{\I}_n^{(i)}(\theta) = 2$,

    \item for all $\theta \in \Theta^\varepsilon_{(2>0>1)}$,  \ $\overline{\I}_n(\theta) = \overline{\I}_n^{(i)}(\theta) = 2$, and $\underline{\I}_n(\theta) = \underline{\I}_n^{(i)}(\theta) = 1$,

    \item for all $\theta \in \Theta^\varepsilon_{(0>2>1)}$,  \ $\underline{\I}_n(\theta) = \underline{\I}_n^{(i)}(\theta) = 1$. 
    
\end{itemize}

The idea behind event $E_{n, i}(\varepsilon, \gamma)$ is that it occurs with very high probability and that we have a good control of $\Delta_n^{(i)}(\theta)$ on that event, namely, for each region, we condition how early or late and on which of the walks the minima and maxima of projections will be achieved. The following proposition establishes our assertion. 

\begin{proposition} \label{proposition - control of delta_n_i - perimeter}
For any $\gamma \in(0,1 / 2)$, and any  $\varepsilon > 0$, the following hold:
\begin{itemize}
    \item[(i)] if $i \in I_{n, \gamma} = \{1, \ldots, n\} \cap[\gamma n,(1-\gamma) n]$, then, a.s., for any $\theta \in \Theta^\varepsilon_{\operatorname{I}}$,
    \begin{align*}
       \Delta_{n}^{(i)}(\theta) \mathbf{1}\left(E_{n, i}(\varepsilon, \gamma)\right)= \left( Z_{i}^{(\overline{\I}_n(\theta))}-\tilde{Z}^{(\overline{\I}_n(\theta))}_{i} \right)  \cdot \mathbf{e}_{\theta} \mathbf{1}\left(E_{n, i}(\varepsilon, \gamma)\right);
    \end{align*}
    for any $\theta \in \Theta^\varepsilon_{\operatorname{II}}$ we have
    \begin{align*}
       \Delta_{n}^{(i)}(\theta) \mathbf{1}\left(E_{n, i}(\varepsilon, \gamma)\right)=\left( \left( Z_{i}^{(\overline{\I}_n(\theta))}-\tilde{Z}^{(\overline{\I}_n(\theta))}_{i} \right) - \left( Z_{i}^{(\underline{\I}_n(\theta))}-\tilde{Z}^{(\underline{\I}_n(\theta))}_{i} \right) \right) \cdot \mathbf{e}_{\theta} \mathbf{1}\left(E_{n, i}(\varepsilon, \gamma)\right);
    \end{align*}
    while for any $\theta \in \Theta^\varepsilon_{\operatorname{III}}$ we have
    \begin{align*}
       \Delta_{n}^{(i)}(\theta) \mathbf{1}\left(E_{n, i}(\varepsilon, \gamma)\right)= - \left( Z_{i}^{(\underline{\I}_n(\theta))}-\tilde{Z}^{(\underline{\I}_n(\theta))}_{i} \right)  \cdot \mathbf{e}_{\theta} \mathbf{1}\left(E_{n, i}(\varepsilon, \gamma)\right). 
    \end{align*}
    \item[(ii)] if $\mathbb{E}\left[\|Z_{1}^{(k)}\|\right]<\infty$ for both $k\in\{1,2\}$, and \eqref{eq:per_assumption} holds, then 
    $$
    \lim_{n \to \infty} \min _{1 \leq i \leq n} \mathbb{P}\left[E_{n, i}(\varepsilon, \gamma)\right] = 1.
    $$
\end{itemize}
\end{proposition}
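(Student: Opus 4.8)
The plan is to separate the deterministic identity of part (i), which holds pathwise on $E_{n,i}(\varepsilon,\gamma)$, from the probabilistic estimate of part (ii). For part (i) the structural observation is that $i\in I_{n,\gamma}$ forces $\gamma n\le i\le(1-\gamma)n$, so every extremum the event places ``early'' (at a time $<\gamma n$) occurs strictly before the resampling time $i$, where $S_j^{(k,i)}=S_j^{(k)}$, while every extremum placed ``late'' (at a time $>(1-\gamma)n$) occurs at a time $\ge i$, where $S_j^{(k,i)}\cdot\e_\theta=S_j^{(k)}\cdot\e_\theta-(Z_i^{(k)}-\tilde Z_i^{(k)})\cdot\e_\theta$. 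I would first isolate two elementary facts valid on $E_{n,i}(\varepsilon,\gamma)$. If the maximal projection of both the original and the resampled configuration is attained on one and the same walk $\kappa$ at a time $>(1-\gamma)n$, then, since over the block $\{i,\dots,n\}$ the resampled projection is the original one shifted by the constant $(Z_i^{(\kappa)}-\tilde Z_i^{(\kappa)})\cdot\e_\theta$ and both maxima lie in that block, $M_n(\theta)-M_n^{(i)}(\theta)=(Z_i^{(\kappa)}-\tilde Z_i^{(\kappa)})\cdot\e_\theta$; whereas if both maxima are attained before $\gamma n$, then $M_n(\theta)=M_n^{(i)}(\theta)$ because the two configurations agree on $\{0,\dots,i-1\}$ (no walk index need be identified here). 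The same dichotomy holds for $m_n$ and $m_n^{(i)}$.

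Part (i) then reduces to reading off $\Delta_n^{(i)}=(M_n-M_n^{(i)})-(m_n-m_n^{(i)})$ region by region from the clauses defining $E_{n,i}(\varepsilon,\gamma)$: on $\Theta^\varepsilon_{\mathrm{I}}$ the maximum is late, on walk $\overline{\I}_n(\theta)$, and the minimum is early, so only the maximum contributes; on $\Theta^\varepsilon_{\mathrm{III}}$ the roles are reversed; on $\Theta^\varepsilon_{\mathrm{II}}$ both extrema are late, on the fixed and matching indices $\overline{\I}_n(\theta)$ and $\underline{\I}_n(\theta)$, and both contribute. This reproduces the three displayed identities.

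For part (ii) I would lower-bound $\min_i\mathbb{P}[E_{n,i}]$ by the probability of a single ``good'' event independent of $i$. Writing $T_j^{(k)}:=S_j^{(k)}-j\boldsymbol{\mu}^{(k)}$, set, for a small $\eta>0$ fixed later,
\[ A_n:=\Big\{\max_{0\le j\le n}\|T_j^{(k)}\|\le\tfrac{\eta}{2}n,\ \max_{1\le i\le n}\big(\|Z_i^{(k)}\|+\|\tilde Z_i^{(k)}\|\big)\le\tfrac{\eta}{2}n,\ k=1,2\Big\}. \]
Both constraints have probability tending to $1$: the first since $\mathbb{E}\|Z_1^{(k)}\|<\infty$ gives $S_n^{(k)}/n\to\boldsymbol{\mu}^{(k)}$ a.s.\ and hence $\max_{j\le n}\|T_j^{(k)}\|/n\to 0$ a.s., the second since $\max_{i\le n}\|Z_i^{(k)}\|/n\to 0$ a.s.\ under the same moment assumption. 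Because $S_j^{(k,i)}-j\boldsymbol{\mu}^{(k)}=T_j^{(k)}-(Z_i^{(k)}-\tilde Z_i^{(k)})\mathbf{1}(j\ge i)$, on $A_n$ one controls every resampled walk uniformly in $i$, namely $\max_{j\le n}\|S_j^{(k,i)}-j\boldsymbol{\mu}^{(k)}\|\le\eta n$.

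It then remains to check that, for $\eta$ small enough relative to $\varepsilon$ and $\gamma$, one has $A_n\subseteq E_{n,i}(\varepsilon,\gamma)$ for every $i$ and all large $n$. Here assumption \eqref{eq:per_assumption} is used to guarantee that on each region the relevant drift projections are separated from $0$ by $\varepsilon$, so that the projected walks have linear parts of slope exceeding $\varepsilon$ in modulus. The standard ``drift beats fluctuation'' comparison then applies: on $\Theta^\varepsilon_{(1>2>0)}$, say, $S_n^{(1)}\cdot\e_\theta\ge n\,\boldsymbol{\mu}^{(1)}\cdot\e_\theta-\eta n$ overtakes $\max_j S_j^{(2)}\cdot\e_\theta\le n\,\boldsymbol{\mu}^{(2)}\cdot\e_\theta+\eta n$ once $2\eta<\varepsilon$, fixing the maximizing walk, while $\max_{j\le(1-\gamma)n}S_j^{(1)}\cdot\e_\theta\le(1-\gamma)n\,\boldsymbol{\mu}^{(1)}\cdot\e_\theta+\eta n$ stays below $S_n^{(1)}\cdot\e_\theta$ once $2\eta<\gamma\varepsilon$, forcing the argmax past $(1-\gamma)n$. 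The same two comparisons, applied to the minima and to the resampled walks (whose linear parts are unchanged and whose fluctuations remain $\le\eta n$ on $A_n$), dispose of every clause of $E_{n,i}$, uniformly over $\theta$ in each region and over $i$. I expect part (ii), and specifically this uniformity in $i$ for the resampled configurations, to be the main obstacle; it becomes tractable because the controlling quantity $\max_{j\le n}\|T_j^{(k)}\|$ is independent of both $\theta$ and $i$, the $i$-dependence reduces to a single increment swap bounded by $o(n)$, and the continuum of angles costs nothing since $|T_j^{(k)}\cdot\e_\theta|\le\|T_j^{(k)}\|$ and the drift gaps are already encoded in the regions $\Theta^\varepsilon_\bullet$.
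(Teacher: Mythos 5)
Your proposal is correct: part (i) reproduces the paper's argument (early extrema are unaffected because the original and resampled configurations coincide before time $i$, late extrema on a matched walk index are shifted by the constant $(Z_i^{(\kappa)}-\tilde Z_i^{(\kappa)})\cdot\e_\theta$, and the region-by-region clauses of $E_{n,i}(\varepsilon,\gamma)$ together with $\gamma n\le i\le(1-\gamma)n$ let you read off $\Delta_n^{(i)}(\theta)$), and part (ii) rests on exactly the same estimates as the paper — SLLN control of the centered walks at scale $o(n)$, the drift separation by $\varepsilon$ built into the regions $\Theta^\varepsilon_{\bullet}$, and the margin $2\eta<\gamma\varepsilon$ (the paper's $\varepsilon_1<\gamma\varepsilon/2$) to push the relevant argmax/argmin past $(1-\gamma)n$ or before $\gamma n$. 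The organizational difference is in (ii): the paper verifies the eight clauses one at a time, proving for each that its probability tends to $1$ (invoking, for the resampled clauses, that $\overline{\I}_n^{(i)}(\theta)$ is equidistributed with $\overline{\I}_n(\theta)$, and finishing with dominated convergence), whereas you package everything into a single event $A_n$ controlling $\max_{j\le n}\|S_j^{(k)}-j\boldsymbol{\mu}^{(k)}\|$ and $\max_{i\le n}(\|Z_i^{(k)}\|+\|\tilde Z_i^{(k)}\|)$ at scale $\eta n$, and prove the deterministic inclusion $A_n\subseteq E_{n,i}(\varepsilon,\gamma)$ uniformly in $i$ and $\theta$. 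Your packaging buys a cleaner treatment of the uniformity over $i$ and of the resampled walks (a single increment swap costs at most $\eta n/2$ on $A_n$, so no separate equidistribution argument is needed), at the cost of having to check that one fixed $\eta$ simultaneously validates all eight clauses; the paper's clause-by-clause route is longer but lets each clause choose its own $\varepsilon_1$. Both are complete proofs.
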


\begin{proof}
$(i)$ Suppose that $i \in I_{n, \gamma}$, so $\gamma n \leq i \leq(1-\gamma) n$. Also, suppose that $\theta \in \Theta^\varepsilon_{\operatorname{I}}$. On $E_{n, i}(\varepsilon, \gamma)$, we have that $\underline{J}_{n, \underline{\I}_n(\theta)}(\theta)<i<\overline{J}_{n, \overline{\I}_n(\theta)}(\theta)$ and $\underline{J}^{(i)}_{n, \underline{\I}_n^{(i)}(\theta)}(\theta)<i<\overline{J}^{(i)}_{n, \overline{\I}_n^{(i)}(\theta)}(\theta)$. Therefore, from the definition of the resampled processes (\ref{definicija resamplanih šetnji}), we can see that it has to be $\underline{J}_{n, \underline{\I}_n(\theta)}(\theta)=\underline{J}^{(i)}_{n, \underline{\I}_n^{(i)}(\theta)}(\theta)$ and moreover $\underline{\I}_n(\theta) = \underline{\I}_n^{(i)}(\theta)$. Thus, it implies that $\underline{J}_{n, \underline{\I}_n(\theta)}(\theta)=\underline{J}^{(i)}_{n, \underline{\I}_n(\theta)}(\theta)$. Hence $m_n(\theta) = m_n^{(i)}(\theta)$.  Further, on the event $E_{n, i}(\varepsilon, \gamma)$, it holds that $\overline{\mathcal{I}}_n(\theta) = \overline{\mathcal{I}}_n^{(i)}(\theta)$. Thus, we can express 
\begin{align*}
    M_n^{(i)}(\theta) = M_n(\theta) + (\tilde{Z}_i^{(\overline{\I}_n(\theta))}-Z_i^{(\overline{\I}_n(\theta)) })\cdot \e_\theta. 
\end{align*}
Therefore, the first equality of $(i)$ follows.  For the second equality, take an angle $\theta \in \Theta^\varepsilon_{\operatorname{II}}$. On $E_{n, i}(\varepsilon, \gamma)$, we have that $\overline{\I}_n(\theta) = \overline{\I}_n^{(i)}(\theta)$ and $\underline{\I}_n(\theta) = \underline{\I}_n^{(i)}(\theta)$. Hence, similarly as earlier, we obtain that
\begin{align*}
    M_n^{(i)}(\theta) = M_n(\theta) + (\tilde{Z}_i^{(\overline{\I}_n(\theta))}-Z_i^{(\overline{\I}_n(\theta)) })\cdot \e_\theta, 
\end{align*}
and similarly
\begin{align*}
    m_n^{(i)}(\theta) = m_n(\theta) + (\tilde{Z}_i^{(\underline{\I}_n(\theta))}-Z_i^{(\underline{\I}_n(\theta)) })\cdot \e_\theta, 
\end{align*}
so the claim follows. The third equality (for $\theta \in \Theta_{\operatorname{III}}^\varepsilon$) is shown similarly. 

$(ii)$ The idea behind the proof of this claim is to show that the probabilities for all eight items in the definition of $E_{n, i}(\gamma, \varepsilon)$ tend to $1$ as $n \rightarrow \infty$, no matter which $i \in I_{n, \gamma}$ we choose. Let us prove the claim for the first item. The key idea is to simultaneously use the strong law of large numbers (see \cite[Theorem 2.4.1]{durrett2019probability}) for both walks. Take an arbitrary $\varepsilon_1$ such that $0 < \varepsilon_1 < \varepsilon$. There exists a random variable $N := N(\varepsilon_1)$ such that $N$ is finite almost surely and 
\begin{align*}
   n \geq N \implies \left\| \frac{S_n^{(k)}}{n} - \boldsymbol{\mu}^{(k)} \right\| < \varepsilon_1
\end{align*}
for both $k \in\{1, 2\}$. This implies that, for $\theta\in\Theta^\varepsilon_{\operatorname{I}}$,
\begin{align}\label{jzvb}
     n \geq N \implies \left| \frac{S_n^{(k)}}{n} \cdot\e_\theta  - \boldsymbol{\mu}^{(k)}\cdot \e_\theta \right| =  \left| \frac{S_n^{(k)}}{n}\cdot \e_\theta  - \mu^{(k)} \cos(\theta^{(k)} - \theta) \right| \leq \left\| \frac{S_n^{(k)}}{n} - \boldsymbol{\mu}^{(k)} \right\| < \varepsilon_1. 
\end{align}
For $n \geq N$, we have that
\begin{align*}
    S_n^{(k)}\cdot \e_\theta > (\mu^{(k)} \cos(\theta^{(k)} - \theta) - \varepsilon_1) n > (\varepsilon - \varepsilon_1)n. 
\end{align*}
The last term is strictly positive because of the choice of $\varepsilon_1$. Therefore, for any $\theta \in \Theta_{\operatorname{I}}^\varepsilon$, we have that $S_n^{(k)}\cdot\e_\theta > 0$ for both $k \in \{1, 2\}$ and $n \geq N$. But, recall that $S_0^{(k)} \cdot \e_\theta = 0$, so it gives us that 
$\underline{J}_{n, \underline{\I}_n(\theta)}(\theta) < N$ for all $\theta \in \Theta_{\operatorname{I}}^\varepsilon$. Hence,
\begin{align*}
    1 \geq \lim_n \mathbb{P} \left( \bigcap_{\theta \in \Theta_{\operatorname{I}}^\varepsilon} \{ \underline{J}_{n, \underline{\I}_n(\theta)}(\theta) < \gamma n \} \right) \geq \lim_n \mathbb{P} \left( N \leq \gamma n \right) = 1, 
\end{align*}
since $N$ is a.s. finite. Considering the second event, $\overline{J}_{n, \overline{\mathcal{I}}_n(\theta)}(\theta)>(1-\gamma) n$, we have that
\begin{align}\label{bigger}
    \max _{0 \leq j \leq(1-\gamma) n} S_j^{(\overline{\I}_n(\theta))} \cdot \mathbf{e}_\theta \leq \max \left\{\max _{0 \leq j \leq N} S_j^{(\overline{\I}_n(\theta))} \cdot \mathbf{e}_\theta, \max _{N \leq j \leq(1-\gamma) n} S_j^{(\overline{\I}_n(\theta))} \cdot \mathbf{e}_\theta\right\}. 
\end{align}
For the last term, $(\ref{jzvb})$ yields
\begin{align*}
    \max _{N \leq j \leq(1-\gamma) n} S_j^{(\overline{\I}_n(\theta))} \cdot \mathbf{e}_\theta 
    &\leq 
    \max _{0 \leq j \leq(1-\gamma) n}
    \left( 
    \mu^{(\overline{\I}_n(\theta))} \cos(\theta^{(\overline{\I}_n(\theta))} - \theta)+\varepsilon_1\right) j \\ &\leq \left( 
    \mu^{(\overline{\I}_n(\theta))} \cos(\theta^{(\overline{\I}_n(\theta))} - \theta)+\varepsilon_1\right) (1-\gamma) n .
\end{align*}
Once again, if $n \geq N$, the inequality $(\ref{jzvb})$ gives us 
\begin{align*}
     S_n^{(\overline{\I}_n(\theta))} \cdot\e_\theta > \left( 
    \mu^{(\overline{\I}_n(\theta))} \cos(\theta^{(\overline{\I}_n(\theta))} - \theta)- \varepsilon_1\right)  n . 
\end{align*}
The inequality 
\begin{align*}
    \left( 
    \mu^{(\overline{\I}_n(\theta))} \cos(\theta^{(\overline{\I}_n(\theta))} - \theta)-\varepsilon_1\right) 
    \geq
    \left( 
    \mu^{(\overline{\I}_n(\theta))} \cos(\theta^{(\overline{\I}_n(\theta))} - \theta)+\varepsilon_1\right)  (1-\gamma)
\end{align*}
holds a.s.\ if, and only if, 
\begin{align*}
    \varepsilon_1 \leq \frac{\gamma \mu^{(\overline{\I}_n(\theta))} \cos(\theta^{(\overline{\I}_n(\theta))} - \theta)}{2 - \gamma}
\end{align*}
holds a.s. Therefore, we can additionally require that $\varepsilon_1 > 0$ has been taken such that
\begin{align*}
    \varepsilon_1 < \frac{\gamma \varepsilon}{2}
\end{align*}
for the preceding inequality to hold. In that case, for any $\theta \in \Theta_{\operatorname{I}}^\varepsilon$, we have that
\begin{align*}
    S_n^{(\overline{\I}_n(\theta))} \cdot \mathbf{e}_\theta>\max _{N \leq j \leq(1-\gamma) n} S_j^{(\overline{\I}_n(\theta))} \cdot \mathbf{e}_\theta \quad \textnormal{a.s.}
\end{align*}
Hence, by $(\ref{bigger})$, we have that
\begin{align*}
    \mathbb{P}\left[\bigcap_{\theta \in \Theta_{\operatorname{I}}^\varepsilon}\left\{\overline{J}_{n, \overline{\I}_n(\theta)}(\theta)>(1-\gamma) n\right\}\right] &\geq \mathbb{P}\left[\bigcap_{\theta \in \Theta_{\operatorname{I}}^\varepsilon}\left\{S_n^{(\overline{\I}_n(\theta))} \cdot \mathbf{e}_\theta>\max _{0 \leq j \leq(1-\gamma) n} S_j^{(\overline{\I}_n(\theta))} \cdot \mathbf{e}_\theta\right\}\right] \\
    &\geq \mathbb{P}\left[ N \leq n, \bigcap_{\theta \in \Theta_{\operatorname{I}}^\varepsilon}\left\{S_n^{(\overline{\I}_n(\theta))} \cdot \mathbf{e}_\theta>\max _{0 \leq j \leq N} S_j^{(\overline{\I}_n(\theta))} \cdot \mathbf{e}_\theta\right\}\right]. 
\end{align*}
Additionally, for $n \geq N$, we have that 
\begin{align*}
    S_n^{(\overline{\I}_n(\theta))} \e_\theta 
    > (\mu^{(\overline{\I}_n(\theta))} \cos(\theta^{(\overline{\I}_n(\theta))} - \theta) - \varepsilon_1) n 
    > (\varepsilon - \frac{\gamma \varepsilon}{2})n = \varepsilon n \left( 1 - \frac{\gamma}{2} \right), 
\end{align*}
so we get
\begin{align*}
    \mathbb{P}\left[\bigcap_{\theta \in                     \Theta_{\operatorname{I}}^\varepsilon}\left\{\overline{J}_{n, \overline{\I}_n(\theta)}(\theta)>(1-\gamma) n\right\}\right] 
    \geq 
    \mathbb{P}\left[ N \leq n, \max _{\substack{0 \leq j \leq N \\ k = 1, 2}} \| S_j^{(k)} \|  \leq \varepsilon n \left( 1 - \frac{\gamma}{2} \right) \right]. 
\end{align*}
But, since $N$ is finite a.s., we have that $\mathbb{P}(N >n) \rightarrow 0$ and
\begin{align*}
    \lim_{n \rightarrow \infty} \mathbb{P}\left[ \max _{\substack{0 \leq j \leq N \\ k = 1, 2}} \| S_j^{(k)} \|  > \varepsilon n \left( 1 - \frac{\gamma}{2} \right) \right] = 0.  
\end{align*}
Therefore, we have that 
\begin{align*}
    \lim_{n \rightarrow \infty} \mathbb{P}\left[ N \leq n, \max _{\substack{0 \leq j \leq N \\ k = 1, 2}} \| S_j^{(k)} \|  \leq \varepsilon n \left( 1 - \frac{\gamma}{2} \right) \right] = 1, 
\end{align*}
so it gives us
\begin{align*}
    \lim_{n \rightarrow \infty} \mathbb{P}\left[\bigcap_{\theta \in \Theta_{\operatorname{I}}^\varepsilon}\left\{\overline{J}_{n, \overline{\I}_n(\theta)}(\theta)>(1-\gamma) n\right\}\right] = 1. 
\end{align*}
This shows the asymptotic probability of the first statement in the first item point of the definition of the set $E_{n,i}(\varepsilon, \gamma)$. Note that the statement of the first item corresponding to the resampled walks can be shown in the same way, given that resampling preserves the underlying distribution. The proofs for the second and third item points are omitted, as they proceed in a completely analogous way as the first item point.

\begin{comment}
    Once again, the Strong Law of Large Numbers says that
\begin{align*}
    \left\| \frac{S_n^{(k)}}{n} - \E[Z_1^{(k)}]\right\| \rightarrow 0 \text{ a.s.} 
\end{align*}
as $n \rightarrow \infty$ for $k = 1, 2$. Hence, for $\varepsilon_1 > 0$ we know that there exists random variable $N := N(\varepsilon_1)$ such that $N$ is finite almost surely and 
\begin{align*}
   n \geq N \implies \left\| \frac{S_n^{(k)}}{n} - \E[Z_1^{(k)}]\right\| < \varepsilon_1
\end{align*}
for both $k =1, 2$. 
\end{comment}
We now proceed with the the fourth item. We focus on the angles belonging to $\Theta_{(1>2>0)}^\varepsilon$. It should be noted that the reasoning deployed here can be easily adapted to other cases. We aim to establish that
\begin{align*}
    \lim_{n \rightarrow \infty} \mathbb{P} \left( \bigcap_{\theta \in \Theta_{(1>2>0)}^\varepsilon} \{ \overline{\I}_n(\theta) = \overline{\I}_n^{(i)}(\theta) = 1 \} \right)  = 1. 
\end{align*}
% Since we have that
% \begin{align*}
%     \mathbb{P}&(  \bigcap_{\theta \in \Theta_{(1>2>0)}^\varepsilon} \{ \overline{\I}_n(\theta) = \overline{\I}_n^{(i)} (\theta) \}) \\
%     &= \mathbb{P}( \bigcap_{\Theta_{(1>2>0)}^\varepsilon} \{ \overline{\I}_n(\theta) = 1, \overline{\I}_n^{(i)}(\theta) = 1 \}) + \mathbb{P}( \bigcap_{\Theta_{(1>2>0)}^\varepsilon} \{ \overline{\I}_n(\theta) = 2, \overline{\I}_n^{(i)}(\theta) = 2 \}),
% \end{align*}
% it is sufficient to prove that 
% $$\mathbb{P}( \bigcap_{\Theta_{(1>2>0)}^\varepsilon} \{ \overline{\I}_n(\theta) = 1, \overline{\I}_n^{(i)}(\theta) = 1) \rightarrow 1.$$
Since $\overline{\I}_n^{(i)}(\theta)$ is identically distributed as $\overline{\I}_n(\theta)$, it is sufficient to prove that 
$$
\lim_{n \to \infty} \mathbb{P} \left( \bigcap_{\theta \in\Theta_{(1>2>0)}^\varepsilon} \{ \overline{\I}_n(\theta) = 1 \} \right) = 1.
$$ 
Choose $\varepsilon_1 > 0$ such that $2 \varepsilon_1 < \boldsymbol{\mu}^{(1)} \cdot\e_\theta - \boldsymbol{\mu}^{(2)} \cdot\e_\theta $, and $\boldsymbol{\mu}^{(2)} \cdot\e_\theta  - \varepsilon_1 > 0$ for all $\theta \in \Theta_{(1>2>0)}^{\varepsilon}$, which is possible because of the definition of $\Theta_{(1>2>0)}^{\varepsilon}$. For this collection of one-dimensional walks, we have that 
\begin{align*}
    n \geq N \implies \left| \frac{S_n^{(k)} }{n} \cdot\e_\theta- \boldsymbol{\mu}^{(k)} \cdot\e_\theta  \right| \leq  \left\| \frac{S_n^{(k)}}{n} - \boldsymbol{\mu}^{(k)}\right\| < \varepsilon_1
\end{align*}
for $k \in \{1, 2\}$. Hence, we have the following 
\begin{align*}
    \mathbb{P} \left( \bigcap_{\theta \in \Theta_{(1>2>0)}^{\varepsilon}} \{ \overline{\I}_n(\theta) = 1 \} \right) 
    &\geq 
    \mathbb{P} \left(N \leq n, \ \bigcap_{\theta \in \Theta_{(1>2>0)}^{\varepsilon}} \{ \overline{\I}_n(\theta) = 1 \} \right) \\
    &\geq 
    \mathbb{P}\left(
        N \leq n, \ \bigcap_{\theta \in \Theta_{(1>2>0)}^{\varepsilon}} \max_{0 \leq j \leq n}S_j^{(1)} \cdot \e_\theta \geq 
        \max_{0 \leq j \leq n}S_j^{(2)} \cdot\e_\theta 
    \right) \\
    &= 
    \mathbb{P}\left(
        N \leq n, \ \bigcap_{\theta \in \Theta_{(1>2>0)}^{\varepsilon}} \max_{0 \leq j \leq n}\frac{S_j^{(1)} \cdot\e_\theta }{n} 
        \geq 
        \max_{0 \leq j \leq n} \frac{S_j^{(2)} \cdot\e_\theta }{n}  
    \right) \\
    &= 
    \mathbb{P}\left(
        N \leq n, \  \bigcap_{\theta \in \Theta_{(1>2>0)}^{\varepsilon}} \max \left\{ \max_{0 \leq j \leq N}\frac{S_j^{(1)} \cdot\e_\theta}{n}, \max_{N < j \leq n}\frac{S_j^{(1)} \cdot\e_\theta}{n} \right\} \right. 
        \geq 
        \\
        & \quad \quad  \quad \left. 
        \max \left\{ \max_{0 \leq j \leq N}\frac{S_j^{(2)} \cdot\e_\theta}{n}, \max_{N < j \leq n}\frac{S_j^{(2)} \cdot\e_\theta}{n} \right\} 
    \right) \\
    &\geq 
    \mathbb{P}\left(
        N \leq n, \  \bigcap_{\theta \in \Theta_{(1>2>0)}^{\varepsilon}} \max \left\{ \max_{0 \leq j \leq N}\frac{S_j^{(1)} \cdot\e_\theta}{n}, \max_{N < j \leq n} \frac{j}{n}(\boldsymbol{\mu}^{(1)} \cdot\e_\theta- \varepsilon_1) \right\} \right. 
        \geq \\
        & \quad \quad \quad \left. 
         \max \left\{ \max_{0 \leq j \leq N}\frac{S_j^{(2)} \cdot\e_\theta}{n}, \max_{N < j \leq n} \frac{j}{n}(\boldsymbol{\mu}^{(2)} \cdot\e_\theta + \varepsilon_1) \right\} \right), \\
\end{align*}
where our selection of $\varepsilon_1$ justifies the last inequality. By the dominated convergence theorem, it becomes clear that the final term converges to $1$ as $n\to\infty$. Thus, we have proved the asymptotic probability for the fourth item point in the definition of $E_{n, i}(\varepsilon, \gamma)$. The same idea is applied to prove the statements for the remaining four items. Combining all the results, we get $(ii)$. 
\end{proof}

\begin{remark}
    Notice that in the case when the drift vectors are co-linear, see Figure \ref{fig:colinear_drifts}, the situation is slightly different then the one shown in Figure \ref{fig:position of the drift vectors}. The proof of Proposition \ref{proposition - control of delta_n_i - perimeter} remains the same, the only difference being that some of the subsets of angles $\theta \in [0, \pi]$ that were introduced in \eqref{eq:def_of_Theta_sets} are empty. It the case when both drift vectors have the same direction, the sets $\Theta_{(1 > 0 > 2)}^{\varepsilon}$ and $\Theta_{(2 > 0 > 1)}^{\varepsilon}$ are empty, while in the case when drift vectors have opposite directions the sets $\Theta_{(1 > 2 > 0)}^{\varepsilon}$, $\Theta_{(2 > 1 > 0)}^{\varepsilon}$ and $\Theta_{(0 > 2 > 1)}^{\varepsilon}$ are empty. The case when both vectors have the same magnitude and the same orientation is excluded by the assumption \eqref{eq:per_assumption}. This case is discussed further in Section \ref{sec:simulations}. Somewhat surprisingly, simulation results suggest that we lose the normality of the distributional limit in this case. We offer a possible explanation for this phenomena, but the formal proof remained out of our reach. The efforts to extend our results in this direction are currently underway.

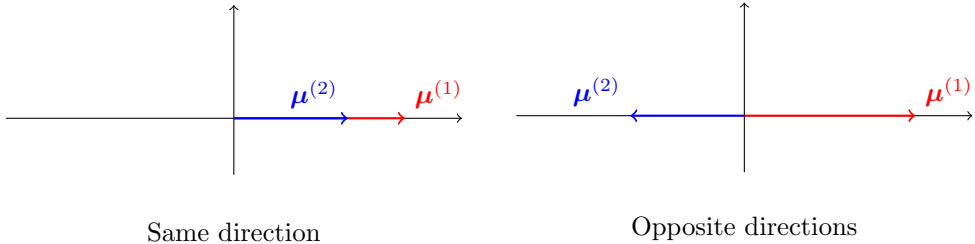
\begin{figure}[h]
    \centering
    \begin{tikzpicture}[scale=1.5]
    % First coordinate plane
    \draw[->] (-2,0) -- (2,0) node[right] {};
    \draw[->] (0,-0.5) -- (0,1) node[above] {};
    
    % First drift vector in the first quadrant
    \draw[->, red, thick] (0,0) -- (1.5,0) node[above right] {$\boldsymbol{\mu}^{(1)}$};
    
    % Second drift vector in the second quadrant
    \draw[->, blue, thick] (0,0) -- (1,0) node[above left] {$\boldsymbol{\mu}^{(2)}$};

    % Label for the first coordinate plane
    \node at (0,-1) {Same direction};
\end{tikzpicture}
\quad  
\begin{tikzpicture}[scale=1.5]
    % First coordinate plane
    \draw[->] (-2,0) -- (2,0) node[right] {};
    \draw[->] (0,-0.5) -- (0,1) node[above] {};
    
    % First drift vector in the first quadrant
    \draw[->, red, thick] (0,0) -- (1.5,0) node[above right] {$\boldsymbol{\mu}^{(1)}$};
    
    % Second drift vector in the second quadrant
    \draw[->, blue, thick] (0,0) -- (-1,0) node[above left] {$\boldsymbol{\mu}^{(2)}$};

    % Label for the first coordinate plane
    \node at (0,-1) {Opposite directions};
\end{tikzpicture}
    \caption{Allowed positions of the co-linear drift vectors.}
    \label{fig:colinear_drifts}
\end{figure}

\end{remark}

\subsection{Approximation lemma for the perimeter}

In the following lemma, we compare $\mathcal{L}_{n, i}$ with appropriately centered and projected $i$-th steps of the walks. The proof of the lemma depends on our earlier assumptions regarding the spatial orientation of the drift vectors relative to the $y$-axis. Irrespective of the scenario chosen, the outcome remains the same - the only distinction lies in the specific angular regions under consideration and the sequencing of integrals. For clarity, we will explain only the case where the $y$-axis is situated between the drift vectors.

\begin{lemma} \label{approximation lemma}
    Assume \eqref{eq:per_assumption} and $E[\|Z_1^{(k)}\|] < \infty$ for both $k \in \{1,2\}$. Then, for any $\gamma \in (0,1/2)$, $\varepsilon > 0$, and $i \in I_{n, \gamma}$, we have
    \begin{align*}
        &\left| \mathcal{L}_{n, i}- \left( \left(Z_i^{(1)}-\boldsymbol{\mu}^{(1)}\right) \cdot (\e_{\theta^{(0)}}^\perp + \e_{\theta^{(1)}})
        + \left(Z_i^{(2)}-\boldsymbol{\mu}^{(2)}\right) \cdot (\e_{\theta^{(2)}} - \e_{\theta^{(0)}}^\perp)\right) \right| 
        \\
        &\leq
        3 \pi ( \|Z_i^{(1)}\| + \|Z_i^{(2)}\|)\mathbb{P}\left[E_{n, i}^c(\varepsilon, \gamma) \mid \mathcal{F}_i\right]
         + 3 \pi \mathbb{E}\left[ (  \|\tilde{Z}_i^{(1)}\| + \|\tilde{Z}_i^{(2)}\|)  \mathbf{1}\left(E_{n, i}^c(\varepsilon, \gamma)\right) \mid \mathcal{F}_i\right] 
         \\
         &\ \ \ \ + 3 \lambda((\Theta^\varepsilon)^c) \left( \|Z_i^{(1)}\| + \|Z_i^{(2)}\| + \E \|Z_1^{(1)}\| + \E \|Z_1^{(2)}\| \right), 
    \end{align*}
where $\lambda$ is Lebesgue measure on $[0, \pi]$ and $(\Theta^\varepsilon)^c$ is the complement of the set ${\Theta}^\varepsilon$ in $[0, \pi]$. 
\end{lemma}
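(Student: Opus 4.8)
The plan is to start from the Cauchy representation $L_n - L_n^{(i)} = \int_0^\pi \Delta_n^{(i)}(\theta)\,d\theta$ and, since Lemma \ref{lemma - omedjenost integrabilnom varijablom} furnishes an $\mathcal{F}_i$-conditionally integrable dominating bound for $\sup_\theta |\Delta_n^{(i)}(\theta)|$, apply Fubini's theorem to pull the conditional expectation inside the integral, obtaining $\mathcal{L}_{n,i} = \int_0^\pi \mathbb{E}[\Delta_n^{(i)}(\theta)\mid\mathcal{F}_i]\,d\theta$. I would then split the angular domain as $[0,\pi] = \Theta^\varepsilon \cup (\Theta^\varepsilon)^c$ and, on $\Theta^\varepsilon$, split each integrand according to the indicator of $E_{n,i}(\varepsilon,\gamma)$, writing $\Delta_n^{(i)} = \Delta_n^{(i)}\mathbf{1}(E_{n,i}) + \Delta_n^{(i)}\mathbf{1}(E_{n,i}^c)$.

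On the good event and on $\Theta^\varepsilon$, I would invoke Proposition \ref{proposition - control of delta_n_i - perimeter}(i), which (since $i\in I_{n,\gamma}$) identifies $\Delta_n^{(i)}(\theta)\mathbf{1}(E_{n,i})$ on each subregion with an explicit linear functional of $Z_i^{(k)} - \tilde Z_i^{(k)}$. Taking $\mathbb{E}[\cdot\mid\mathcal{F}_i]$ and using that $\tilde Z_i^{(k)}$ is independent of $\mathcal{F}_i$ with mean $\boldsymbol{\mu}^{(k)}$, I would write $\mathbb{E}[(Z_i^{(k)}-\tilde Z_i^{(k)})\cdot\e_\theta\,\mathbf{1}(E_{n,i})\mid\mathcal{F}_i] = (Z_i^{(k)}-\boldsymbol{\mu}^{(k)})\cdot\e_\theta - \mathbb{E}[(Z_i^{(k)}-\tilde Z_i^{(k)})\cdot\e_\theta\,\mathbf{1}(E_{n,i}^c)\mid\mathcal{F}_i]$. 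The first summand produces the leading contribution $\int_{\Theta^\varepsilon}[\text{signed sum of } (Z_i^{(k)}-\boldsymbol{\mu}^{(k)})\cdot\e_\theta]\,d\theta$, where the signs and the indices $k$ are dictated by whether the region feeds the maximum or the minimum; the second summand, together with $\Delta_n^{(i)}\mathbf{1}(E_{n,i}^c)$ and the $(\Theta^\varepsilon)^c$ integral, will be absorbed into the error terms.

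The heart of the matter is the geometric identity: integrating $\e_\theta$ over each idealized region (the $\varepsilon\to0$ limit of the $\Theta^\varepsilon_{(\cdots)}$) with the correct signs recovers exactly the target direction vectors. Concretely, using the antiderivative $\int_\alpha^\beta \e_\theta\,d\theta = \e_{\beta-\pi/2} - \e_{\alpha-\pi/2}$, the region boundaries at $\theta^{(1)}+\tfrac{\pi}{2}$, $\theta^{(2)}-\tfrac{\pi}{2}$, and the equality angle (which solves $\boldsymbol{\mu}^{(1)}\cdot\e_\theta = \boldsymbol{\mu}^{(2)}\cdot\e_\theta$ and hence coincides, up to orientation, with $\theta^{(0)}$), the telescoping sum collapses to the coefficient $\e_{\theta^{(0)}}^\perp + \e_{\theta^{(1)}}$ for $(Z_i^{(1)}-\boldsymbol{\mu}^{(1)})$ and $\e_{\theta^{(2)}} - \e_{\theta^{(0)}}^\perp$ for $(Z_i^{(2)}-\boldsymbol{\mu}^{(2)})$; the convention $\e_{\theta^{(0)}}^\perp\cdot\e_{\theta^{(1)}}\ge0$ fixes the correct orientation. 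Replacing the integral over the true $\varepsilon$-regions by the integral over the idealized regions introduces a discrepancy supported on $(\Theta^\varepsilon)^c$, which I would bound via $|(Z_i^{(k)}-\boldsymbol{\mu}^{(k)})\cdot\e_\theta|\le \|Z_i^{(k)}\| + \mathbb{E}\|Z_1^{(k)}\|$ (using $\|\boldsymbol{\mu}^{(k)}\|\le\mathbb{E}\|Z_1^{(k)}\|$) by $\lambda((\Theta^\varepsilon)^c)$ times the bracketed factor, yielding the third error term.

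Finally, I would collect the remaining errors. Every term carrying $\mathbf{1}(E_{n,i}^c)$ is dominated, via Lemma \ref{lemma - omedjenost integrabilnom varijablom}, by a constant multiple of $(\|Z_i^{(1)}\|+\|\tilde Z_i^{(1)}\|+\|Z_i^{(2)}\|+\|\tilde Z_i^{(2)}\|)\mathbf{1}(E_{n,i}^c)$ integrated over $[0,\pi]$; separating the $\mathcal{F}_i$-measurable factors $\|Z_i^{(k)}\|$ (which come out multiplied by $\mathbb{P}[E_{n,i}^c\mid\mathcal{F}_i]$) from the resampled factors $\|\tilde Z_i^{(k)}\|$ (which remain inside $\mathbb{E}[\,\cdot\,\mathbf{1}(E_{n,i}^c)\mid\mathcal{F}_i]$) produces the first two error terms, with the constant $3\pi$ accommodating the several contributions. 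The main obstacle I anticipate is precisely the bookkeeping of the geometric computation in the previous paragraph: correctly listing the nonempty regions in the chosen scenario (in particular, verifying that $\Theta^\varepsilon_{\mathrm{III}}$ is empty when the $y$-axis separates the drifts), assigning the right walk and sign to each, and checking that the telescoped boundary terms assemble into $\e_{\theta^{(0)}}^\perp$ with the mandated orientation, rather than the routine error estimates.
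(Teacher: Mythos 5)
Your proposal is correct and follows essentially the same route as the paper's proof: Cauchy formula plus conditional expectation, splitting on the event $E_{n,i}(\varepsilon,\gamma)$ and on the regions $\Theta^\varepsilon_{(\cdots)}$, invoking Proposition \ref{proposition - control of delta_n_i - perimeter}(i) on the good event, absorbing the indicator corrections and the $(\Theta^\varepsilon)^c$ discrepancy into the stated error terms, and reducing everything to the geometric identity \eqref{eqref:key_equality}. The only cosmetic difference is that you evaluate the four interval integrals via the vector antiderivative $\int_\alpha^\beta \e_\theta\,\mathrm{d}\theta = \e_{\beta-\pi/2}-\e_{\alpha-\pi/2}$ and telescoping (which indeed yields $\e_{\theta^{(0)}}^\perp+\e_{\theta^{(1)}}$ and $\e_{\theta^{(2)}}-\e_{\theta^{(0)}}^\perp$, since $\theta^{(0)}=\pi/2$ in the chosen normalization), whereas the paper writes the centered steps in polar form and checks the identity trigonometrically.
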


\begin{proof} 

Start from $L_n - L_n^{(i)}$ and take the conditional expectation with respect to $\mathcal{F}_i$. We get
\begin{align}\label{al1}
    \mathcal{L}_{n, i}=\int_0^\pi \mathbb{E}\left[\Delta_n^{(i)}(\theta) \mathbf{1}\left(E_{n, i}(\varepsilon, \gamma)\right) \mid \mathcal{F}_i\right] \mathrm{d} \theta
    +
    \int_0^\pi \mathbb{E}\left[\Delta_n^{(i)}(\theta) \mathbf{1}\left(E_{n, i}^c(\varepsilon, \gamma)\right) \mid \mathcal{F}_i\right] \mathrm{d} \theta. 
\end{align}
For the second term in $(\ref{al1})$, we have
\begin{align}\label{al2}
    \left|\int_0^\pi \mathbb{E}\left[\Delta_n^{(i)}(\theta) \mathbf{1}\left(E_{n, i}^{\mathrm{c}}(\varepsilon, \gamma)\right) \mid \mathcal{F}_i\right] \mathrm{d} \theta\right| \leq \int_0^\pi \mathbb{E}\left[\left|\Delta_n^{(i)}(\theta)\right| \mathbf{1}\left(E_{n, i}^{\mathrm{c}}(\varepsilon, \gamma)\right) \mid \mathcal{F}_i\right] \mathrm{d} \theta, 
\end{align}
and apply the upper bound obtained in Lemma $\ref{lemma - omedjenost integrabilnom varijablom}$ to get
\begin{small}
\begin{align*}
    &\int_0^\pi \mathbb{E}\left[\left|\Delta_n^{(i)}(\theta)\right| \mathbf{1}\left(E_{n, i}^c(\varepsilon, \gamma)\right) \mid \mathcal{F}_i\right] \mathrm{d} \theta\\
    &
    \leq 
    2 \pi \mathbb{E}\left[\left(\left\|Z_i^{(1)}\right\|+\left\|\tilde{Z}_i^{(1)}\right\| + \left\|Z_i^{(2)}\right\|+\left\|\tilde{Z}_i^{(2)}\right\|\right) \mathbf{1}\left(E_{n, i}^c(\varepsilon, \gamma)\right) \mid \mathcal{F}_i\right] \\
    &= 2 \pi ( \|Z_i^{(1)}\| + \|Z_i^{(2)}\|)\mathbb{P}\left[E_{n, i}^c(\varepsilon, \gamma) \mid \mathcal{F}_i\right]+2 \pi \mathbb{E}\left[ (  \|\tilde{Z}_i^{(1)}\| + \|\tilde{Z}_i^{(2)}\|)  \mathbf{1}\left(E_{n, i}^c(\varepsilon, \gamma)\right) \mid \mathcal{F}_i\right],
\end{align*}
\end{small}
where we used that $Z_i^{(k)}$ are $\mathcal{F}_i$-measurable with $\E \|Z_i^{(k)}\| < \infty$. Thus, it suffices to show
\begin{align*}
        &\left| \int_0^\pi \mathbb{E}\left[\Delta_n^{(i)}(\theta) \mathbf{1}\left(E_{n, i}(\varepsilon, \gamma)\right) \mid \mathcal{F}_i\right] \mathrm{d} \theta - \left( \left(Z_i^{(1)}-\boldsymbol{\mu}^{(1)}\right) \cdot (\e_{\theta^{(0)}}^\perp + \e_{\theta^{(1)}})
        + \left(Z_i^{(2)}-\boldsymbol{\mu}^{(2)}\right) \cdot (\e_{\theta^{(2)}} - \e_{\theta^{(0)}}^\perp)\right) \right| 
        \\
        &\leq
        \pi ( \|Z_i^{(1)}\| + \|Z_i^{(2)}\|)\mathbb{P}\left[E_{n, i}^c(\varepsilon, \gamma) \mid \mathcal{F}_i\right]
         + \pi \mathbb{E}\left[ (  \|\tilde{Z}_i^{(1)}\| + \|\tilde{Z}_i^{(2)}\|)  \mathbf{1}\left(E_{n, i}^c(\varepsilon, \gamma)\right) \mid \mathcal{F}_i\right] 
         \\
         &\ \ \ \ + 3 \lambda((\Theta^\varepsilon)^c) \left( \|Z_i^{(1)}\| + \|Z_i^{(2)}\| + \E \|Z_1^{(1)}\| + \E \|Z_1^{(2)}\| \right), 
    \end{align*}
Let us decompose the first integral in $(\ref{al1})$. It can be written as the following sum
\begin{equation}\label{eq:integral_breakdown}
\begin{aligned}
    \int_{[0, \pi]} \mathbb{E}\left[\Delta_n^{(i)}(\theta) \mathbf{1}\left(E_{n, i}(\varepsilon, \gamma)\right) \mid \mathcal{F}_i\right]  \mathrm{d} \theta 
    =& 
    \int_{[0, \theta^{(2)}-\pi/2]} \mathbb{E}\left[\Delta_n^{(i)}(\theta) \mathbf{1}\left(E_{n, i}(\varepsilon, \gamma)\right) \mid \mathcal{F}_i\right] \mathrm{d} \theta 
    \\ 
    &+ 
    \int_{[\theta^{(2)} - \pi/2, \pi/2]} \mathbb{E} \left[\Delta_n^{(i)}(\theta) \mathbf{1}\left(E_{n, i}(\varepsilon, \gamma)\right) \mid \mathcal{F}_i\right] \mathrm{d} \theta \\
    &+
    \int_{[\pi/2, \theta^{(1)} + \pi/2]} \mathbb{E} \left[\Delta_n^{(i)}(\theta) \mathbf{1}\left(E_{n, i}(\varepsilon, \gamma)\right) \mid \mathcal{F}_i\right] \mathrm{d} \theta \\
    &+ 
    \int_{[\theta^{(1)} + \pi/2, \pi]} \mathbb{E} \left[\Delta_n^{(i)}(\theta) \mathbf{1}\left(E_{n, i}(\varepsilon, \gamma)\right) \mid \mathcal{F}_i\right] \mathrm{d} \theta. 
\end{aligned}
\end{equation}
Denote these integrals with $I_1, I_2, I_3$ and $I_4$, respectively. Let us calculate the first integral. For $I_1$, we have that
\begin{equation}\label{eq:I_11}
\begin{aligned}
    \int_{[0, \theta^{(2)} - \pi/2]} \mathbb{E} \left[\Delta_n^{(i)}(\theta) \mathbf{1}\left(E_{n, i}(\varepsilon, \gamma)\right) \mid \mathcal{F}_i\right] \mathrm{d} \theta =& \int_{\Theta^\varepsilon_{(1>0>2)}} \mathbb{E} \left[\Delta_n^{(i)}(\theta) \mathbf{1}\left(E_{n, i}(\varepsilon, \gamma)\right) \mid \mathcal{F}_i\right] \mathrm{d} \theta 
    \\
    &+ \int_{(\Theta^\varepsilon_{(1>0>2)})^c} \mathbb{E} \left[\Delta_n^{(i)}(\theta) \mathbf{1}\left(E_{n, i}(\varepsilon, \gamma)\right) \mid \mathcal{F}_i\right] \mathrm{d} \theta , 
\end{aligned}
\end{equation}
where the complement $(\Theta^\varepsilon_{(1>0>2)})^c$ of $\Theta^\varepsilon_{(1>0>2)}$  has been taken with respect to $[0, \theta^{(2)} - \pi/2]$. The second integral is bounded with
\begin{align*}
    \left| \int_{(\Theta^\varepsilon_{(1>0>2)})^c} \mathbb{E} \left[\Delta_n^{(i)}(\theta) \mathbf{1}\left(E_{n, i}(\varepsilon, \gamma)\right) \mid \mathcal{F}_i\right] \mathrm{d} \theta \right|
    \leq
    2 \lambda ((\Theta^\varepsilon_{(1>0>2)})^c) \left( \|Z_i^{(1)}\| + \E \|Z_i^{(1)}\| +  \|Z_i^{(2)}\| + \E \|Z_i^{(2)}\| \right) 
\end{align*}
by Lemma \ref{lemma - omedjenost integrabilnom varijablom}. An analog bound is obtained for $I_2$, $I_3$, and $I_4$, by replacing $\Theta^{\varepsilon}_{(1 > 0 > 2)}$ with $\Theta^{\varepsilon}_{(1 > 2 > 0)}$, $\Theta^{\varepsilon}_{(2 > 1 > 0)}$, and $\Theta^{\varepsilon}_{(2 > 0 > 1)}$ respectively, and by taking complements with respect to $[\theta^{(2)} - \pi/2, \pi/2]$, $[\pi/2, \theta^{(1)} + \pi/2]$, and $[\theta^{(1)} + \pi/2, \pi]$ respectively. Combining these bounds, our problem reduces to showing that
\begin{align*}
        &\left| \int_{\Theta^{\varepsilon}} \mathbb{E}\left[\Delta_n^{(i)}(\theta) \mathbf{1}\left(E_{n, i}(\varepsilon, \gamma)\right) \mid \mathcal{F}_i\right] \mathrm{d} \theta - \left( \left(Z_i^{(1)}-\boldsymbol{\mu}^{(1)}\right) \cdot (\e_{\theta^{(0)}}^\perp + \e_{\theta^{(1)}})
        + \left(Z_i^{(2)}-\boldsymbol{\mu}^{(2)}\right) \cdot (\e_{\theta^{(2)}} - \e_{\theta^{(0)}}^\perp)\right) \right| 
        \\
        &\leq
        \pi ( \|Z_i^{(1)}\| + \|Z_i^{(2)}\|)\mathbb{P}\left[E_{n, i}^c(\varepsilon, \gamma) \mid \mathcal{F}_i\right]
         + \pi \mathbb{E}\left[ (  \|\tilde{Z}_i^{(1)}\| + \|\tilde{Z}_i^{(2)}\|)  \mathbf{1}\left(E_{n, i}^c(\varepsilon, \gamma)\right) \mid \mathcal{F}_i\right] 
         \\
         &\ \ \ \ +\lambda((\Theta^\varepsilon)^c) \left( \|Z_i^{(1)}\| + \|Z_i^{(2)}\| + \E \|Z_1^{(1)}\| + \E \|Z_1^{(2)}\| \right). 
    \end{align*}
By Proposition $\ref{proposition - control of delta_n_i - perimeter}$, we have that the first integral in \eqref{eq:I_11} can be rewritten as
\begin{align*}
    &\int_{\Theta^\varepsilon_{(1>0>2)}} \mathbb{E} \left[\Delta_n^{(i)}(\theta) \mathbf{1}\left(E_{n, i}(\varepsilon, \gamma)\right) \mid \mathcal{F}_i\right] \mathrm{d} \theta  
    \\
    &= \int_{\Theta^\varepsilon_{(1>0>2)}} \mathbb{E} \left[ \left( \left( Z_{i}^{(\overline{\I}_n(\theta))}-\tilde{Z}^{(\overline{\I}_n(\theta))}_{i} \right) - \left( Z_{i}^{(\underline{\I}_n(\theta))}-\tilde{Z}^{(\underline{\I}_n(\theta))}_{i} \right) \right) \cdot \mathbf{e}_{\theta}\mathbf{1}\left(E_{n, i}(\varepsilon, \gamma)\right) \mid \mathcal{F}_i\right] \mathrm{d} \theta, 
\end{align*}
while on $E_{n, i}(\varepsilon, \gamma)$ we have that $\overline{\I}_n(\theta) = 1$ and $\underline{\I}_n(\theta) = 2$ for these choices of angles. Thus, the previous integral is equal to
\begin{align*}
    &\int_{\Theta^\varepsilon_{(1>0>2)}} \mathbb{E} \left[ \left( \left( Z_{i}^{(\overline{\I}_n(\theta))}-\tilde{Z}^{(\overline{\I}_n(\theta))}_{i} \right) - \left( Z_{i}^{(\underline{\I}_n(\theta))}-\tilde{Z}^{(\underline{\I}_n(\theta))}_{i} \right) \right) \cdot \mathbf{e}_{\theta} \mathbf{1}\left(E_{n, i}(\varepsilon, \gamma)\right) \mid \mathcal{F}_i\right]  \mathrm{d} \theta 
    \\
    &= \int_{\Theta^\varepsilon_{(1>0>2)}} \mathbb{E}  \left[ \left( \left( Z_{i}^{(1)}-\tilde{Z}^{(1)}_{i} \right) - \left( Z_{i}^{(2)}-\tilde{Z}^{(2)}_{i} \right) \right) \cdot \mathbf{e}_{\theta} \mathbf{1}\left(E_{n, i}(\varepsilon, \gamma)\right) \mid \mathcal{F}_i\right] \mathrm{d} \theta \text{.}
\end{align*}
Now, we can rewrite it as follows
\begin{equation*}
\begin{aligned}
    &\int_{\Theta^\varepsilon_{(1>0>2)}} \mathbb{E}  \left[ \left( \left( Z_{i}^{(1)}-\tilde{Z}^{(1)}_{i} \right) - \left( Z_{i}^{(2)}-\tilde{Z}^{(2)}_{i} \right) \right) \cdot\e_\theta \mathbf{1}\left(E_{n, i}(\varepsilon, \gamma)\right) \mid \mathcal{F}_i\right] \mathrm{d} \theta 
    \\
    & = \int_{\Theta^\varepsilon_{(1>0>2)}} \mathbb{E}  \left[ \left( \left( Z_{i}^{(1)}-\tilde{Z}^{(1)}_{i} \right) - \left( Z_{i}^{(2)}-\tilde{Z}^{(2)}_{i} \right) \right) \cdot\e_\theta \mid \mathcal{F}_i\right] \mathrm{d} \theta \\
    & \ \ \ \ -
    \int_{\Theta^\varepsilon_{(1>0>2)}} \mathbb{E} \left[ \left( \left( Z_{i}^{(1)}-\tilde{Z}^{(1)}_{i} \right) - \left( Z_{i}^{(2)}-\tilde{Z}^{(2)}_{i} \right) \right) \cdot\e_\theta \mathbf{1}\left(E_{n, i}^c(\varepsilon, \gamma)\right) \mid \mathcal{F}_i\right] \mathrm{d} \theta. 
\end{aligned}
\end{equation*}
The second integral is negligible since
\begin{align*}
    &\left| \int_{\Theta^\varepsilon_{(1>0>2)}} \mathbb{E} \left[ \left( \left( Z_{i}^{(1)}-\tilde{Z}^{(1)}_{i} \right) - \left( Z_{i}^{(2)}-\tilde{Z}^{(2)}_{i} \right) \right) \cdot\e_\theta \mathbf{1}\left(E_{n, i}^c(\varepsilon, \gamma)\right) \mid \mathcal{F}_i\right] \mathrm{d} \theta \right| 
    \\
    &\leq 
    \lambda(\Theta^\varepsilon_{(1>0>2)}) \left( ( \|Z_i^{(1)}\| + \|Z_i^{(2)}\|) \mathbb{P}[E_{n, i}^c(\varepsilon, \gamma) \mid \mathcal{F}_i]
    + 
    \E[ ( \|\tilde{Z}_i^{(1)}\| + \|\tilde{Z}_i^{(2)}\|) \mathbf{1}\left(E_{n, i}^c(\varepsilon, \gamma)\right) \mid \mathcal{F}_i ]
    \right),
\end{align*}
while the first one appears as
\begin{align*}
    &\int_{\Theta^\varepsilon_{(1>0>2)}} \mathbb{E}  \left[ \left( \left( Z_{i}^{(1)}-\tilde{Z}^{(1)}_{i} \right) - \left( Z_{i}^{(2)}-\tilde{Z}^{(2)}_{i} \right) \right) \cdot\e_\theta \mid \mathcal{F}_i\right] \mathrm{d} \theta \\
    &= \int_{[0, \theta^{(2)} - \pi/2]} \mathbb{E}  \left[ \left( \left( Z_{i}^{(1)}-\tilde{Z}^{(1)}_{i} \right) - \left( Z_{i}^{(2)}-\tilde{Z}^{(2)}_{i} \right) \right) \cdot\e_\theta \mid \mathcal{F}_i\right] \mathrm{d} \theta \\
    &\ \ \ \ - \int_{(\Theta^\varepsilon_{(1>0>2)})^c} \mathbb{E}  \left[ \left( \left( Z_{i}^{(1)}-\tilde{Z}^{(1)}_{i} \right) - \left( Z_{i}^{(2)}-\tilde{Z}^{(2)}_{i} \right) \right) \cdot\e_\theta  \mid \mathcal{F}_i\right] \mathrm{d} \theta. 
\end{align*}
The contribution of the latter integral is small since
\begin{align*}
    &\left| \int_{(\Theta^\varepsilon_{(1>0>2)})^c} \mathbb{E}  \left[\left( \left( Z_{i}^{(1)}-\tilde{Z}^{(1)}_{i} \right) - \left( Z_{i}^{(2)}-\tilde{Z}^{(2)}_{i} \right) \right) \cdot\e_\theta \mid \mathcal{F}_i\right] \mathrm{d} \theta \right| 
    \\
    &\leq 
    \lambda ((\Theta^\varepsilon_{(1>0>2)})^c) \left( \|Z_i^{(1)}\| + \|Z_i^{(2)}\| + \E \|Z_1^{(1)}\| + \E \|Z_1^{(2)}\| \right).  
\end{align*}
Similarly as before, we conclude analogous bounds for $I_2$, $I_3$ and $I_4$, again by replacing $\Theta^{\varepsilon}_{(1 > 0 > 2)}$ with $\Theta^{\varepsilon}_{(1 > 2 > 0)}$, $\Theta^{\varepsilon}_{(2 > 1 > 0)}$, and $\Theta^{\varepsilon}_{(2 > 0 > 1)}$ respectively, and $[0, \theta^{(2)} - \pi/2]$ with $[\theta^{(2)} - \pi/2, \pi/2]$, $[\pi/2, \theta^{(1)} + \pi/2]$, and $[\theta^{(1)} + \pi/2, \pi]$ respectively. Putting all this together, and using Proposition \ref{proposition - control of delta_n_i - perimeter}, it remains to show that
\begin{equation}\label{eqref:key_equality}
\begin{aligned}
    & \int_{[0, \theta^{(2)} - \pi/2]} \mathbb{E}  \left[ \left( \left( Z_{i}^{(1)}-\tilde{Z}^{(1)}_{i} \right) - \left( Z_{i}^{(2)}-\tilde{Z}^{(2)}_{i} \right) \right) \cdot\e_\theta \mid \mathcal{F}_i\right] \mathrm{d} \theta \\
    & + \int_{[\theta^{(2)} - \pi/2, \pi/2]} \mathbb{E}  \left[ \left( \left( Z_{i}^{(1)}-\tilde{Z}^{(1)}_{i} \right) \right) \cdot\e_\theta \mid \mathcal{F}_i\right] \mathrm{d} \theta \\
    & + \int_{[\pi/2, \theta^{(1)} + \pi/2]} \mathbb{E}  \left[ \left( \left( Z_{i}^{(2)}-\tilde{Z}^{(2)}_{i} \right) \right) \cdot\e_\theta \mid \mathcal{F}_i\right] \mathrm{d} \theta \\
    & + \int_{[\theta^{(1)} + \pi/2,\pi]} \mathbb{E}  \left[ \left( \left( Z_{i}^{(2)}-\tilde{Z}^{(2)}_{i} \right) - \left( Z_{i}^{(1)}-\tilde{Z}^{(1)}_{i} \right) \right) \cdot\e_\theta \mid \mathcal{F}_i\right] \mathrm{d} \theta \\
    &  = \left(Z_i^{(1)}-\boldsymbol{\mu}^{(1)}\right) \cdot (\e_{\theta^{(0)}}^\perp + \e_{\theta^{(1)}})
        + \left(Z_i^{(2)}-\boldsymbol{\mu}^{(2)}\right) \cdot (\e_{\theta^{(2)}} - \e_{\theta^{(0)}}^\perp).
\end{aligned}
\end{equation}
Recall that $\theta^{(0)}$ is $\pi/2$. To calculate the former integrals, use the following notation
\begin{align*}
    \mathbb{E}  \left[ (Z_i^{(k)} - \tilde{Z}_i^{(k)}) \cdot\e_\theta \mid \mathcal{F}_i\right] = (Z_i^{(k)} - \E [Z_i^{(k)}]) \cdot\e_\theta = R_i^{(k)}  \e_{\varphi_i^{(k)}} \cdot \e_\theta, 
\end{align*}
where $R_i^{(k)}$ is the module, and $\e_{\varphi_i^{(k)}}$ is the unit vector of the centered $i$-th step of the $k$-th random walk. Therefore, we get
\begin{align*}
   & \int_{[0, \theta^{(2)} - \pi/2]}   \mathbb{E}  \left[ \left( \left( Z_{i}^{(1)}-\tilde{Z}^{(1)}_{i} \right) - \left( Z_{i}^{(2)}-\tilde{Z}^{(2)}_{i} \right) \right) \e_\theta \mid \mathcal{F}_i\right] \mathrm{d} \theta \\
    & = \int_{[0, \theta^{(2)} - \pi/2]}   \left( R_i^{(1)}  \e_{\varphi_i^{(1)}}  \cdot \e_\theta  -  R_i^{(2)}  \e_{\varphi_i^{(2)}}  \cdot \e_\theta \right) \,  \mathrm{d} \theta \\
    & = R_i^{(1)} \left[ - \cos(\theta^{(2)} - \varphi_i^{(1)}) + \sin(\varphi_i^{(1)}) \right]
     - R_i^{(2)} \left[ - \cos(\theta^{(2)} - \varphi_i^{(2)}) + \sin(\varphi_i^{(2)}) \right].
\end{align*}

The remaining integrals can be expressed in the same way, and they are equal to
$$
R_i^{(1)}\left[\cos \left(\varphi_i^{(1)}\right)+\cos \left(\theta^{(2)}-\varphi_i^{(1)}\right)\right], \qquad R_i^{(2)} \left[ \cos(\theta^{(1)} - \varphi_i^{(2)}) - \cos(\varphi_i^{(2)}) \right], \qquad \textnormal{and}
$$
$$R_i^{(2)} \left[  \sin(\varphi_i^{(2)}) - \cos(\theta^{(1)} - \varphi_i^{(2)}) \right]
- R_i^{(1)} \left[ \sin(\varphi_i^{(1)}) - \cos(\theta^{(1)} - \varphi_i^{(1)}) \right],$$
respectively. It is now straightforward to check equality \eqref{eqref:key_equality}, which finishes the proof.
\end{proof}

\begin{remark}
    Notice that the only changes in the case of co-linear (but not equal) drift vectors is that the domain of the second and third integral in \eqref{eq:integral_breakdown} reduces to just one point, so the corresponding integrals trivialy vanish.
\end{remark}

Let us denote 
\begin{equation*}
    Y_i^{(1)}=  \left(Z_i^{(1)}-\boldsymbol{\mu}^{(1)}\right) \cdot (\e_{\theta^{(0)}}^\perp + \e_{\theta^{(1)}}),
    \qquad
    Y_i^{(2)}=  \left(Z_i^{(2)}-\boldsymbol{\mu}^{(2)}\right) \cdot (\e_{\theta^{(2)}} - \e_{\theta^{(0)}}^\perp),
\end{equation*}
and
\begin{equation*}
    W_{n,i}=\mathcal{L}_{n,i}-Y_i^{(1)}-Y_i^{(2)}.
\end{equation*}
In the following lemma we show that the error term $W_{n,i}$ is $L^2$-negligible under the scaling $\sqrt{n}$.

\begin{lemma} \label{sum goes to 0}
Assume \eqref{eq:per_assumption} and $\E\left[\|Z_1^{(k)}\|^2\right]<\infty$ for both $k \in \{1, 2\}$. Then,
\begin{equation*}
    \lim _{n \rightarrow \infty} \frac{1}{n} \sum_{i=1}^n \mathbb{E}\left[W_{n, i}^2\right]=0. 
\end{equation*}
\end{lemma}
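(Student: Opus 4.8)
The plan is to control the Cesàro average $\frac1n\sum_{i=1}^n\E[W_{n,i}^2]$ by splitting the index set at its two ends, estimating the bulk $i\in I_{n,\gamma}$ through Lemma \ref{approximation lemma}, and only at the very end sending the auxiliary parameters $\gamma\downarrow 0$ and $\varepsilon\downarrow 0$, after the limit $n\to\infty$ has been taken. First I would dispose of the boundary indices $i\notin I_{n,\gamma}$. Lemma \ref{lemma - omedjenost integrabilnom varijablom} together with the Cauchy formula gives $|\mathcal{L}_{n,i}|\le 2\pi\,\E\bigl[\|Z_i^{(1)}\|+\|\tilde Z_i^{(1)}\|+\|Z_i^{(2)}\|+\|\tilde Z_i^{(2)}\|\mid\mathcal{F}_i\bigr]$, while $|Y_i^{(k)}|\le 2(\|Z_i^{(k)}\|+\mu^{(k)})$; hence $W_{n,i}$ is dominated by a fixed integrable-square random variable whose law does not depend on $i$, so there is a constant $C_0$ with $\E[W_{n,i}^2]\le C_0$ for all $n,i$. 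Since there are at most $2\gamma n+2$ boundary indices, their contribution is at most $(2\gamma n+2)C_0/n\to 2\gamma C_0$.

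For the bulk $i\in I_{n,\gamma}$ I would apply Lemma \ref{approximation lemma}, which yields $|W_{n,i}|\le A_i+B_i+C_i$ with $A_i,B_i,C_i$ the three terms on its right-hand side, so that $W_{n,i}^2\le 3(A_i^2+B_i^2+C_i^2)$. The term $C_i=3\lambda((\Theta^\varepsilon)^c)(\|Z_i^{(1)}\|+\|Z_i^{(2)}\|+\E\|Z_1^{(1)}\|+\E\|Z_1^{(2)}\|)$ carries a deterministic prefactor, so $\frac1n\sum_{i\in I_{n,\gamma}}\E[C_i^2]\le 9\,\lambda((\Theta^\varepsilon)^c)^2\kappa$, where $\kappa:=\E[(\|Z_1^{(1)}\|+\|Z_1^{(2)}\|+\E\|Z_1^{(1)}\|+\E\|Z_1^{(2)}\|)^2]<\infty$. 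Here assumption \eqref{eq:per_assumption} enters: the angles excluded from $\Theta^\varepsilon$ shrink to the finitely many directions where a drift projection vanishes or the two projections coincide, a null set, so $\lambda((\Theta^\varepsilon)^c)\downarrow 0$ as $\varepsilon\downarrow 0$.

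The main obstacle is the pair $A_i,B_i$, since only second moments are assumed and a naive Cauchy--Schwarz split of the products would require fourth moments. Here I would use a truncation (uniform integrability) argument. Writing $U_i:=(\|Z_i^{(1)}\|+\|Z_i^{(2)}\|)^2$, which is $\mathcal{F}_i$-measurable and integrable, the bound $\mathbb{P}[E_{n,i}^c\mid\mathcal{F}_i]\le 1$ gives $\E[A_i^2]\le 9\pi^2\,\E[U_i\,\mathbb{P}[E_{n,i}^c\mid\mathcal{F}_i]]=9\pi^2\,\E[U_i\mathbf{1}(E_{n,i}^c)]$, and conditional Jensen gives $\E[B_i^2]\le 9\pi^2\,\E[\tilde U_i\mathbf{1}(E_{n,i}^c)]$ with $\tilde U_i:=(\|\tilde Z_i^{(1)}\|+\|\tilde Z_i^{(2)}\|)^2$. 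For any $K>0$ and each $i$, splitting on $\{U_i\le K\}$ and $\{U_i>K\}$ yields $\E[U_i\mathbf{1}(E_{n,i}^c)]\le K\,\mathbb{P}[E_{n,i}^c]+\E[U_1\mathbf{1}(U_1>K)]$, using that $U_i\stackrel{d}{=}U_1$, and similarly for $\tilde U_i$.

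To finish, I would sum over $i\in I_{n,\gamma}$ and invoke Proposition \ref{proposition - control of delta_n_i - perimeter}(ii), which gives $\max_{1\le i\le n}\mathbb{P}[E_{n,i}^c]\to 0$; this makes $K\,\max_i\mathbb{P}[E_{n,i}^c]\to 0$ for each fixed $K$, so $\limsup_n\frac1n\sum_{i\in I_{n,\gamma}}(\E[A_i^2]+\E[B_i^2])\le C_1\,\E[U_1\mathbf{1}(U_1>K)]$ for every $K$, and letting $K\to\infty$ kills this contribution by integrability of $U_1$. Combining the three estimates gives $\limsup_n\frac1n\sum_{i=1}^n\E[W_{n,i}^2]\le 2\gamma C_0+27\kappa\,\lambda((\Theta^\varepsilon)^c)^2$; since the left-hand side is free of $\gamma$ and $\varepsilon$, letting $\gamma\downarrow 0$ and then $\varepsilon\downarrow 0$ forces the limit to be $0$, as claimed.
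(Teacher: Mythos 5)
Your proposal is correct and follows essentially the same route as the paper's proof: the same split into boundary indices ($i\notin I_{n,\gamma}$, disposed of via the uniform bound $\E[W_{n,i}^2]\le C_0$) and bulk indices (handled via Lemma \ref{approximation lemma}, truncation of the square-integrable norms, and Proposition \ref{proposition - control of delta_n_i - perimeter}(ii)). The only differences are organizational -- you take $\limsup_{n}$ with $\gamma,\varepsilon$ fixed and send the parameters to zero at the very end, and you expand the square as $3(A_i^2+B_i^2+C_i^2)$ with conditional Jensen for the resampled term, whereas the paper fixes a target $\varepsilon_1$ and chooses $\gamma$, $B_1$, $\varepsilon$, $B_2$, $n_0$ in sequence -- but the underlying estimates coincide.
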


\begin{proof}
Take $\varepsilon_1 > 0$, and let $\gamma \in (0, 1/2)$ and $\varepsilon >0$ be small enough (to be specified later). By the definition of $\mathcal{L}_{n,i}$ and $\Delta_n^{(i)}$, with the bound obtained in Lemma $\ref{lemma - omedjenost integrabilnom varijablom}$, we get
\begin{align*}
    \left|\mathcal{L}_{n,i}\right| &\le \int_{[0,\pi]}\E[|\Delta_n^{(i)}(\theta)|\mid \mathcal{F}_i]\, \mathrm{d} \theta  \le 
    2\int_{[0,\pi]}\E[ \|Z_{i}^{(1)}\|+ \|\tilde{Z}_{i}^{(1)}\|
    + \|Z_{i}^{(2)}\|+ \|\tilde{Z}_{i}^{(2)}\| \mid \mathcal{F}_i] \mathrm{d}\theta 
    \\
    &=2\pi\left(\|Z_{i}^{(1)}\|+ \E\|Z_{i}^{(1)}\|
    + \|Z_{i}^{(2)}\|+ \E\|Z_{i}^{(2)}\|\right).
\end{align*}
From the definition of $W_{n,i}$ and the triangle inequality, we obtain that
\begin{align*}
    &\left|W_{n,i}\right| \leq \left|\mathcal{L}_{n,i}\right| + \left|Y_i^{(1)}\right| + \left|Y_i^{(2)}\right|  
    \\
    &\le 2\pi\left(\|Z_{i}^{(1)}\|+ \E\|Z_{i}^{(1)}\|
    + \|Z_{i}^{(2)}\|+ \E\|Z_{i}^{(2)}\|\right) + 2(\|Z_i^{(1)}\|+\|\E[Z_1^{(1)}]\|) + 2(\|Z_i^{(2)}\|+\|\E[Z_1^{(2)}]\|) 
    \\
    &= (2\pi + 2)\left(\|Z_{i}^{(1)}\|+ \E\|Z_{i}^{(1)}\|
    + \|Z_{i}^{(2)}\|+ \E\|Z_{i}^{(2)}\|\right).
\end{align*}
Hence, under the assumption that $\E[\|Z_1^{(k)}\|^2]<\infty$ for $k \in \{1, 2\}$, there is a constant $C_0>0$ such that $\mathbb{E}[W_{n, i}^2] \leq C_0$, for all $n$ and $i$, where $C_0$ depends only on the distributions of $Z_1^{(1)}$ and $Z_1^{(2)}$. Since card$(I_{n,\gamma}^c)\le2\gamma n$, we have
\begin{align*}
    \frac{1}{n}\sum\limits_{i\not\in I_{n,\gamma}}\E[W_{n,i}^2] \le \frac{1}{n}2\gamma n C_0=2\gamma C_0.
\end{align*}
Recall that $I_{n, \gamma} = \{1, \ldots, n\} \cap [\gamma n, (1 - \gamma)n]$. Therefore, we can choose $\gamma$ small enough so that $2\gamma C_0 < \varepsilon_1$. On the other hand, for $i\in I_{n,\gamma}$, by Lemma $\ref{approximation lemma}$ we have that there exists a constant $C_1$ such that
\begin{align*}
    |W_{n, i}| \leq& C_1 ( \|Z_i^{(1)}\| + \|Z_i^{(2)}\|)\mathbb{P}\left[E_{n, i}^c(\varepsilon, \gamma) \mid \mathcal{F}_i\right]
         + C_1 \mathbb{E}\left[ (  \|\tilde{Z}_i^{(1)}\| + \|\tilde{Z}_i^{(2)}\|)  \mathbf{1}\left(E_{n, i}^c(\varepsilon, \gamma)\right) \mid \mathcal{F}_i\right] 
         \\
         &+ C_1 \lambda((\Theta^\varepsilon)^c) \left( \|Z_i^{(1)}\| + \|Z_i^{(2)}\| + \E \|Z_1^{(1)}\| + \E \|Z_1^{(2)}\| \right). 
\end{align*}
Let us start with bounding the second term. For arbitrary $B_1>0$, we have that
\begin{align*}
    &\mathbb{E}\left[ (  \|\tilde{Z}_i^{(1)}\| + \|\tilde{Z}_i^{(2)}\|)  \mathbf{1}\left(E_{n, i}^c(\varepsilon, \gamma)\right) \mid \mathcal{F}_i\right] 
    \\
    &\leq
    \mathbb{E}\left[ (  \|\tilde{Z}_i^{(1)}\| + \|\tilde{Z}_i^{(2)}\|)  \mathbf{1}\left( \{ \|\tilde{Z}_i^{(1)}\| > B_1 \} \cup \{ \|\tilde{Z}_i^{(2)}\| > B_1 \} \right) \mid \mathcal{F}_i\right] 
    + 2B_1 \mathbb{P} [E_{n, i}^c(\varepsilon, \gamma) \mid \mathcal{F}_i ] 
    \\
    &= \mathbb{E}\left[ (  \|\tilde{Z}_i^{(1)}\| + \|\tilde{Z}_i^{(2)}\|)  \mathbf{1}\left( \{ \|\tilde{Z}_i^{(1)}\| > B_1 \} \cup \{ \|\tilde{Z}_i^{(2)}\| > B_1 \} \right)\right] 
    + 2B_1 \mathbb{P} [E_{n, i}^c(\varepsilon, \gamma) \mid \mathcal{F}_i ], 
\end{align*}
where in the last line we used that $(  \|\tilde{Z}_i^{(1)}\| + \|\tilde{Z}_i^{(2)}\|)  \mathbf{1}( \{ \|\tilde{Z}_i^{(1)}\| > B_1 \} \cup \{ \|\tilde{Z}_i^{(2)}\| > B_1 \} )$ is independent of $\mathcal{F}_i$.  Because $E [\| \tilde{Z}_i^{(k)} \|] < \infty$, the dominated convergence theorem gives that 
$$
\lim_{B_1 \to \infty} \mathbb{E}\left[ (  \|\tilde{Z}_i^{(1)}\| + \|\tilde{Z}_i^{(2)}\|)  \mathbf{1}\left( \{ \|\tilde{Z}_i^{(1)}\| > B_1 \} \cup \{ \|\tilde{Z}_i^{(2)}\| > B_1 \} \right)\right] = 0.
$$
Thus, we can choose $B_1 = B_1(\varepsilon) > 1$ sufficiently large so that
\begin{align*}
  \mathbb{E}\left[ (  \|\tilde{Z}_i^{(1)}\| + \|\tilde{Z}_i^{(2)}\|)  \mathbf{1}\left( \{ \|\tilde{Z}_i^{(1)}\| > B_1 \} \cup \{ \|\tilde{Z}_i^{(2)}\| > B_1 \} \right)\right] 
  \leq
  \lambda((\Theta^\varepsilon)^c). 
\end{align*}
This implies that
\begin{align*}
    \mathbb{E}\left[ (  \|\tilde{Z}_i^{(1)}\| + \|\tilde{Z}_i^{(2)}\|)  \mathbf{1}\left(E_{n, i}^c(\varepsilon, \gamma)\right) \mid \mathcal{F}_i\right]  \leq 
    \lambda((\Theta^\varepsilon)^c) + 2B_1 \mathbb{P} [E_{n, i}^c(\varepsilon, \gamma) \mid \mathcal{F}_i ] \quad \textnormal{a.s.,}
\end{align*}
so we have that
\begin{align*}
    |W_{n, i}| \leq& C_1 ( \|Z_i^{(1)}\| + \|Z_i^{(2)}\|)\mathbb{P}\left[E_{n, i}^c(\varepsilon, \gamma) \mid \mathcal{F}_i\right]
         + C_1 \left( \lambda((\Theta^\varepsilon)^c) + 2B_1 \mathbb{P} [E_{n, i}^c(\varepsilon, \gamma) \mid \mathcal{F}_i ] \right) 
         \\
         &+ C_1 \lambda((\Theta^\varepsilon)^c)  \left( \|Z_i^{(1)}\| + \|Z_i^{(2)}\| + \E \|Z_1^{(1)}\| + \E \|Z_1^{(2)}\| \right). 
\end{align*}
Therefore, there exist a constant $C_2 > 0$ such that
\begin{align*}
     |W_{n, i}| \leq C_2 (1 + \|Z_i^{(1)}\| + \|Z_i^{(2)}\| + \E \|Z_1^{(1)}\| + \E \|Z_1^{(2)}\|) \left( 
     B_1 \mathbb{P}[E_{n, i}^c(\varepsilon, \gamma) \mid \mathcal{F}_i]
     +
     \lambda((\Theta^\varepsilon)^c)
     \right)  \quad \textnormal{a.s.}
\end{align*}
From this, we conclude that there is a constant $C_3>0$ such that
\begin{align*}
    W_{n, i}^2 \leq C_3 (1 + \|Z_i^{(1)}\| + \|Z_i^{(2)}\| + \E \|Z_1^{(1)}\| + \E \|Z_1^{(2)}\|)^2 \cdot \left( B_1^2 \mathbb{P}[E^c_{n, i}(\varepsilon, \gamma) \mid \mathcal{F}_i] + \lambda((\Theta^\varepsilon)^c )^2 \right).  
\end{align*}
Taking expectation of both sides, we get
\begin{align*}
    \E[W_{n, i}^2] \leq&\, C_3 B_1^2 \E [(1 + \|Z_i^{(1)}\| + \|Z_i^{(2)}\| + \E \|Z_1^{(1)}\| + \E \|Z_1^{(2)}\|)^2 \mathbb{P}[E^c_{n, i}(\varepsilon, \gamma) \mid \mathcal{F}_i] ] 
    \\
    &+ C_3 \lambda((\Theta^\varepsilon)^c )^2 \E[(1 + \|Z_i^{(1)}\| + \|Z_i^{(2)}\| + \E \|Z_1^{(1)}\| + \E \|Z_1^{(2)}\|)^2]. 
\end{align*}
Since $\E [\|Z_i^{(k)}\|^2] < \infty$, there exist a constant $C_4>0$ such that
\begin{align*}
    C_3 \lambda((\Theta^\varepsilon)^c )^2 \E[(1 + \|Z_i^{(1)}\| + \|Z_i^{(2)}\| + \E \|Z_1^{(1)}\| + \E \|Z_1^{(2)}\|)^2] 
    \leq C_4 \lambda((\Theta^\varepsilon)^c )^2 . 
\end{align*}
Fix $\varepsilon > 0$ sufficiently small such that $C_4 \lambda((\Theta^\varepsilon)^c )^2 < \varepsilon_1$. This is possible since $\lim_{\varepsilon\to0}\lambda((\Theta^\varepsilon)^c )=0$. Note that this choice also fixes $B_1$. Thus
\begin{align*}
    \E[W_{n, i}^2] \leq C_3 B_1^2 \E [(1 + \|Z_i^{(1)}\| + \|Z_i^{(2)}\| + \E \|Z_1^{(1)}\| + \E \|Z_1^{(2)}\|)^2 \mathbb{P}[E^c_{n, i}(\varepsilon, \gamma) \mid \mathcal{F}_i] ] + \varepsilon_1. 
\end{align*}
To deal with the first term, for $B_2>0$ we have that
\begin{align*}
    &(1 +  \|Z_i^{(1)}\| + \|Z_i^{(2)}\| + \E \|Z_1^{(1)}\| + \E \|Z_1^{(2)}\|)^2 \mathbb{P}[E^c_{n, i}(\varepsilon, \gamma) \mid \mathcal{F}_i] 
    \\
    & \leq 
    (1 + 2B_2  + \E \|Z_1^{(1)}\| + \E \|Z_1^{(2)}\|)^2 \mathbb{P}[E^c_{n, i}(\varepsilon, \gamma) \mid \mathcal{F}_i] 
    \\
    & \ \ \ \  + 
    (1 + \|Z_i^{(1)}\| + \|Z_i^{(2)}\| + \E \|Z_1^{(1)}\| + \E \|Z_1^{(2)}\|)^2 
    \cdot \mathbf{1}\left( \{ \|{Z}_i^{(1)}\| > B_2 \} \cup \{ \|{Z}_i^{(2)}\| > B_2 \} \right). 
\end{align*}
Once again, because $E[\|Z_i^{(k)}\|^2] < \infty$, the dominated convergence theorem gives us the existence of $B_2 = B_2(\varepsilon_1)$ such that
\begin{align*}
    C_3B_1^2 \E\left[(1 + \|Z_i^{(1)}\| + \|Z_i^{(2)}\| + \E \|Z_1^{(1)}\| + \E \|Z_1^{(2)}\|)^2
    \cdot \mathbf{1}\left( \{ \|{Z}_i^{(1)}\| > B_2 \} \cup \{ \|{Z}_i^{(2)}\| > B_2 \} \right)\right] <
    \varepsilon_1. 
\end{align*}
Therefore, 
\begin{align*}
    \E[W_{n, i}^2] \leq 2 \varepsilon_1 + C_3B_1^2 (1 + 2B_2  + \E \|Z_1^{(1)}\| + \E \|Z_1^{(2)}\|)^2 \mathbb{P}[E^c_{n, i}(\varepsilon, \gamma)]. 
\end{align*}
Finally, by Proposition $\ref{proposition - control of delta_n_i - perimeter}$, we may find $n_0\in\mathbb{N}
$ sufficiently large such that
\begin{align*}
    n \geq n_0 \implies  C_3B_1^2 (1 + 2B_2  + \E \|Z_1^{(1)}\| + \E \|Z_1^{(2)}\|)^2 \mathbb{P}[E^c_{n, i}(\varepsilon, \gamma)] < \varepsilon_1. 
\end{align*}
To conclude, for given $\varepsilon_1> 0$, we can find $n_0 \in \mathbb{N}$ such that for all $n \geq n_0$ we have that $\E[W_{n, i}^2] \leq 3 \varepsilon_1$ for all $i\in\I_{n,\gamma}$. Therefore
\begin{align*}
    \frac{1}{n} \sum_{i \in I_{n, \gamma}} \mathbb{E}\left[W_{n, i}^2\right] \leq 3 \varepsilon_1. 
\end{align*}
Combine the estimates for $i \not\in I_{n, \gamma}$ and $i \in I_{n, \gamma}$ to get
\begin{align*}
    \frac{1}{n} \sum_{i=1}^n \mathbb{E}\left[W_{n, i}^2\right] \leq 2 \gamma C_0+3 \varepsilon_1 \leq 4 \varepsilon_1
\end{align*}
for $n \geq n_0$. Recall that $\varepsilon_1$ was arbitrary, so this completes the proof. 
\end{proof}

\subsection{Proofs of the main results}

We are now in a position to prove the main results (for perimeter) of this paper. We start with the $L^2$ approximation result.

\begin{proof}[Proof of Theorem \ref{theorem - L2 convergence - perimeter}]
%Firstly, use the similar WLOG assumption as earlier to get $\e_{\norm} = \e_0$. 
Note that
\begin{align*}
\mathbb{E}\left[W_{n, i} \mid \mathcal{F}_{i-1}\right]=\mathbb{E}\left[\mathcal{L}_{n, i} \mid \mathcal{F}_{i-1}\right]-\mathbb{E}[Y_i^{(1)} + Y_i^{(2)} \mid \mathcal{F}_{i-1}]=-\mathbb{E}[Y_i^{(1)} + Y_i^{(2)}],
\end{align*}
since $\mathcal{L}_{n, i}$ is a martingale difference sequence and $Y_i^{(1)} + Y_i^{(2)}$ is independent of $\mathcal{F}_{i-1}$. By definition, we have that $\mathbb{E}[Y_i^{(k)}]=0$, for $k \in \{1, 2\}$, thus $W_{n, i}$ is also a martingale difference sequence. Using orthogonality, we have that 
\begin{align*}
  n^{-1} \mathbb{E}\left[\left(\sum_{i=1}^n W_{n, i}\right)^2\right]=n^{-1} \sum_{i=1}^n \mathbb{E}\left[W_{n, i}^2\right],
\end{align*}
which, by Lemma \ref{sum goes to 0}, converges to zero, as $n \rightarrow$ $\infty$. Hence, we have that $n^{-1 / 2} \sum_{i=1}^n W_{n, i} \rightarrow 0$ in $L^2$. The assertion now follows from Lemma $\ref{lemma - MDS - perimeter}$.
\end{proof}

We now proceed with showing the variance asymptotics.

\begin{proof}[Proof of Theorem \ref{theorem - variance asymptotic - perimeter}]

Denote with 
\begin{align*}
    \xi_n=\frac{L_n-\mathbb{E}\left[L_n\right]}{\sqrt{n}}, \qquad \text { and } \qquad \zeta_n=\frac{1}{\sqrt{n}} \sum_{i=1}^n \left( Y_i^{(1)} + Y_i^{(2)} \right) . 
\end{align*}
Observe that $\V[\zeta_n] = \sigma^2_L$ for all $n$. From Theorem $\ref{theorem - L2 convergence - perimeter}$, $|\xi_n - \zeta_n|$ vanishes in the $L^2$ norm as $n \to \infty$. Finally, since
\begin{align*}
    \mathbb{E}\left[\left(\xi_n-\zeta_n\right) \zeta_n\right] \leq \mathbb{E}\left[\left(\xi_n-\zeta_n\right)^2\right]^{1/2} \E \left[\zeta_n^2\right]^{1/2},
\end{align*}
we conclude
\begin{align*}
    \lim_{n\to \infty}\frac{\V\left[L_n\right]}{n}
    & = \lim_{n\to \infty} \left(\mathbb{E}\left[\left(\xi_n-\zeta_n\right)^2\right]+\mathbb{E}\left[\zeta_n^2\right]+2 \mathbb{E}\left[\left(\xi_n-\zeta_n\right) \zeta_n\right] \right)  = \sigma_L^2.
\end{align*}
\end{proof}

Finally, we prove the Central Limit Theorem for the perimeter.
\begin{proof}[Proof of Theorem \ref{theorem - CLT - perimeter}]
% We will use two facts about convergence in distribution. First, if sequences of random variables $\xi_n$ and $\zeta_n$ are such that $\zeta_n \rightarrow \zeta$ in distribution, for some random variable $\zeta$, and $\left|\xi_n-\zeta_n\right| \rightarrow 0$ in probability, then $\xi_n \rightarrow \zeta$ in distribution - this is known as the Slutsky's theorem (\cite[Theorem 11.4]{gut2006probability}). Second, if $\zeta_n \rightarrow \zeta$ in distribution and $\alpha_n \rightarrow \alpha$ in probability, then $\alpha_n \zeta_n \rightarrow \alpha \zeta$ in distribution.

We use the same notation as in the proof of Theorem \ref{theorem - variance asymptotic - perimeter}. By the classical central limit theorem for independent and identically distributed random variables (see \cite[Theorem 3.4.10]{durrett2019probability}),
$$
\lim _{n \rightarrow \infty} \mathbb{P}\left[\frac{\zeta_n}{\sqrt{\sigma^2_L}} \leq x\right]=\Phi(x), \qquad x \in \mathbb{R},
$$
where $\Phi$ denotes the cumulative distribution function of the standard normal distribution. By Theorem $\ref{theorem - L2 convergence - perimeter}$, $|\xi_n - \zeta_n| \rightarrow 0$ in probability. Slutsky's theorem \cite[Theorem 11.4]{gut2006probability} now implies
$$
\lim _{n \rightarrow \infty} \mathbb{P}\left[\frac{L_n-\mathbb{E}\left[L_n\right]}{\sqrt{\sigma^2_L n}} \leq x\right] = \lim _{n \rightarrow \infty} \mathbb{P}\left[\frac{\xi_n}{\sqrt{\sigma^2_L}} \leq x\right]=\Phi(x), \qquad x \in \mathbb{R}.
$$
Finally, again by Slutsky's theorem, we have
$$
\mathbb{P}\left[\frac{L_n-\mathbb{E}\left[L_n\right]}{\sqrt{\operatorname{Var}\left[L_n\right]}} \leq x\right]=\mathbb{P}\left[\frac{\xi_n \alpha_n}{\sqrt{\sigma^2_L n}} \leq x\right],
$$
where $\alpha_n := \sqrt{\frac{\sigma^2_L n}{\operatorname{Var}\left[L_n\right]}} \rightarrow 1$, as $n\to\infty$, by Theorem $\ref{theorem - variance asymptotic - perimeter}$.
\end{proof}

\section{Diameter}\label{sec:diameter}

We now turn our attention to the asymptotic behavior of the diameter process. First, we will slightly adjust the methodology we have already established for the perimeter process.

\subsection{Martingale difference sequence and Cauchy formula for diameter}

Recall that the diameter process is defined as
\begin{align*}
    D_n = \operatorname{diam} \left( \text{chull} \left\{ S_j^{(k)} : 0 \leq j \leq n, \ k = 1, 2 \right\} \right). 
\end{align*}
Similarly, as earlier, we consider the diameter of the convex hull of the resampled processes, and we denote it with 
\begin{align*}
    D_n^{(i)} := \operatorname{diam} \left( \text{chull} \left\{ S_j^{(k, i)} : 0 \leq j \leq n, \ k = 1, 2 \right\} \right). 
\end{align*}
For $0 \leq i \leq n$, define
\begin{align*}
    \mathcal{D}_{n, i}:=\mathbb{E}\left[D_{n}-D_{n}^{(i)} \mid \mathcal{F}_{i}\right],
\end{align*}
which can be interpreted as the expected change in the diameter length of the convex hull, given $\mathcal{F}_i$, on replacing the $i$-th increment in both random walks. Analogously as in Lemma \ref{lemma - MDS - perimeter}, we conclude the martingale difference sequence property of the diameter process.

\begin{lemma}\label{lemma - MDS - diameter}
Let $n \in \mathbb{N}$. Then, 
\begin{itemize}
    \item[(i)] $D_{n}-\mathbb{E}\left[D_{n}\right]=\sum_{i=1}^{n} \mathcal{D}_{n, i}$, 
    \item[(ii)] $\operatorname{Var}\left[D_{n}\right]=\sum_{i=1}^{n} \mathbb{E}\left[\mathcal{D}_{n, i}^{2}\right]$, whenever the latter sum is finite.
\end{itemize}
\end{lemma}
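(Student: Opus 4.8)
The plan is to mimic the proof of Lemma \ref{lemma - MDS - perimeter} essentially verbatim, replacing the perimeter functional $L$ by the diameter functional $D$. The only structural fact used in that argument is that the resampled functional at time $i$ does not depend on the original increment $Z_i^{(k)}$, and this holds for \emph{any} measurable functional of the two walks, in particular for the diameter. Indeed, by the definition of the resampled walks in \eqref{definicija resamplanih šetnji}, the trajectories $(S_j^{(k,i)})_{0 \le j \le n}$ are built using the independent copy $\tilde{Z}_i^{(k)}$ in place of $Z_i^{(k)}$, so $D_n^{(i)}$, being a function of these trajectories, is independent of $Z_i^{(k)}$ for both $k \in \{1,2\}$.

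First I would record the three identities that drive the telescoping. Since $D_n^{(i)}$ is independent of $Z_i^{(k)}$, conditioning on $\mathcal{F}_i$ reveals nothing beyond $\mathcal{F}_{i-1}$, so $\mathbb{E}[D_n^{(i)} \mid \mathcal{F}_i] = \mathbb{E}[D_n^{(i)} \mid \mathcal{F}_{i-1}]$. Moreover, because $(\tilde{Z}_i^{(k)})$ is an independent copy of $(Z_i^{(k)})$, the resampled configuration has the same conditional law given $\mathcal{F}_{i-1}$ as the original one, whence $\mathbb{E}[D_n^{(i)} \mid \mathcal{F}_{i-1}] = \mathbb{E}[D_n \mid \mathcal{F}_{i-1}]$. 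Combining these, $\mathcal{D}_{n,i}$ collapses to the familiar Doob increment
$$
\mathcal{D}_{n, i} = \mathbb{E}\left[D_{n} \mid \mathcal{F}_{i}\right] - \mathbb{E}\left[D_{n} \mid \mathcal{F}_{i-1}\right].
$$

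Summing over $1 \le i \le n$ yields $\sum_{i=1}^n \mathcal{D}_{n,i} = \mathbb{E}[D_n \mid \mathcal{F}_n] - \mathbb{E}[D_n \mid \mathcal{F}_0] = D_n - \mathbb{E}[D_n]$, which is $(i)$; here I use that $\mathcal{F}_n$ already determines $D_n$ and that $\mathcal{F}_0$ is trivial. For $(ii)$, the representation above exhibits $(\mathcal{D}_{n,i})_{i=1}^n$ as a martingale difference sequence, so its terms are pairwise orthogonal in $L^2$; expanding $\mathbb{V}\mathrm{ar}[D_n] = \mathbb{E}[(\sum_i \mathcal{D}_{n,i})^2]$ and discarding the vanishing cross terms gives $\mathbb{V}\mathrm{ar}[D_n] = \sum_{i=1}^n \mathbb{E}[\mathcal{D}_{n,i}^2]$, valid whenever this sum is finite.

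There is no genuine obstacle here: the argument is a faithful transcription of the perimeter case, and the only thing one might pause to check is integrability. That is immediate, however, since the diameter is controlled by the extremal vertices, $D_n \le 2\max_{0 \le j \le n,\, k=1,2}\|S_j^{(k)}\|$, so under the standing first-moment assumption $\mathbb{E}[D_n] < \infty$ and all conditional expectations above are well defined. I would therefore present the proof in a single short paragraph, explicitly invoking the independence of $D_n^{(i)}$ from $Z_i^{(k)}$ and then quoting the telescoping and orthogonality exactly as in Lemma \ref{lemma - MDS - perimeter}.
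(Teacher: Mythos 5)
Your proposal is correct and follows exactly the route the paper takes: the paper simply remarks that the lemma is proved "analogously as in Lemma \ref{lemma - MDS - perimeter}", and your argument is precisely that transcription — independence of $D_n^{(i)}$ from $Z_i^{(k)}$, the Doob-increment identity $\mathcal{D}_{n,i} = \mathbb{E}[D_n \mid \mathcal{F}_i] - \mathbb{E}[D_n \mid \mathcal{F}_{i-1}]$, telescoping for $(i)$, and $L^2$-orthogonality of martingale differences for $(ii)$. The added integrability check via $D_n \le 2\max_{0 \le j \le n,\, k=1,2}\|S_j^{(k)}\|$ is a harmless (and correct) extra.
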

The standard definition of diameter is given by
$$
\operatorname{diam}(A) := \sup_{x, y \in A} \|x-y\|.
$$
An alternative representation of the diameter is given through the Cauchy formula. Denote by $\rho_A(\theta)$ the length of the set $A$ when projected on the line specified by the angle $\theta$:
$$
\rho_A(\theta) = \sup_{x \in A} (x \cdot \e_\theta) - \inf_{x \in A} (x \cdot \e_\theta).
$$
The Cauchy formula for diameter (see \cite[Lemma 6]{mcredmond2017expected}) is then given by
$$
\operatorname{diam}(A) = \sup_{\theta \in [0, \pi]} \rho_A(\theta).
$$
Figure \ref{fig: Cauchy-formula diameter} illustrates the formula. 
\begin{figure}[h!]
    \centering
    \begin{tikzpicture}[scale=1]
    % Koordinatni sustav
    \draw[->] (-3,0) -- (5,0) node[right] {$x$};
    \draw[->] (0,-1) -- (0,7) node[above] {$y$};
    
    % Mnogokut s danim vrhovima
    \draw[thick] (1,1) -- (2.86,2.13) -- (3.32,3.09) -- (2.92,4.29) -- (-0.64,6.29) -- (-1.72,4.11) -- cycle;

    % Dodavanje pravca y = -3.2256097560976x
    \draw[dashed, blue] (0, 0) -- (-1.9773,6.3781); 
    \draw[thick, dashed, black] (1, 1) -- (-0.64,6.29); 

    \draw[thick, blue] (-1.8351,5.9195) -- (-0.1952,0.6295); 
    \draw[dashed] (1, 1) -- (-0.1952,0.6295); 
    \draw[dashed] (-0.64,6.29) -- (-1.8351,5.9195); 

    \draw[black] (0.5,0) arc [start angle=0, end angle=107.22, radius=0.5];
    \node at (.2,.2) {$\theta$};
    
    \end{tikzpicture}
    \caption{Illustration of the Cauchy formula for the diameter.}
    \label{fig: Cauchy-formula diameter}
\end{figure}
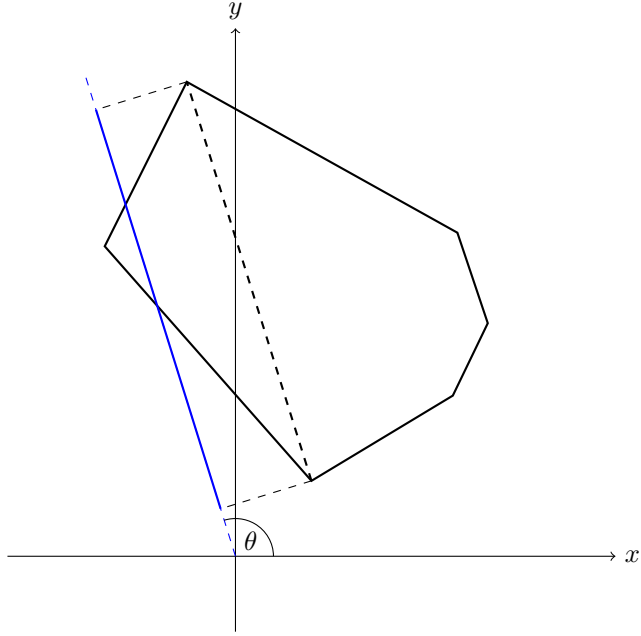
Recall that $M_n(\theta)$ and $m_n(\theta)$ denote the maximal and minimal projections, respectively, of our convex hull along the direction specified by $\e_\theta$. Using this notation, we have 
\begin{align}\label{al:cauchy_form_diam}
    D_{n}=\sup_{0 \leq \theta \leq \pi}\left(M_{n}(\theta)-m_{n}(\theta)\right)=\sup_{0 \leq \theta \leq \pi}R_{n}(\theta). 
\end{align}
Similarly, we represent $D_n^{(i)}$. Once again, we can perform the same linear transformations as for the perimeter case. So, in the further text, we again adopt the assumptions on the positions of the drift vectors presented at the beginning of Section \ref{GA}. 

Similarly as in the case of the perimeter process, we show that the convergence in the strong law of large numbers for the diameter process, presented in \eqref{eq:as_kvg_Ln_Dn}, holds also in $L^1$ sense.

\begin{corollary}\label{cor:L1_cvg_diam}
    Under the assumptions of Theorem \ref{tm:SLLN}, we have
    \begin{equation*}
        \frac{D_n}{n} \xrightarrow[n \to \infty]{L^1} \diam \left( \chull \{\boldsymbol{0}, \boldsymbol{\mu}^{(1)}, \boldsymbol{\mu}^{(2)}\} \right).
    \end{equation*}    
\end{corollary}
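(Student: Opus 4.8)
The plan is to mimic exactly the argument used for the perimeter in Corollary \ref{cor:L1_cvg_per}, since the only two ingredients needed are already in place: the almost sure convergence $D_n/n \to \diam(\chull\{\boldsymbol{0},\boldsymbol{\mu}^{(1)},\boldsymbol{\mu}^{(2)}\})$ from \eqref{eq:as_kvg_Ln_Dn}, and a dominating sequence suitable for Pratt's lemma \cite[Theorem 5.5]{gut2006probability}. The task therefore reduces to producing a simple almost sure upper bound on $D_n$ in terms of sums of the i.i.d.\ increments.

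First I would bound the diameter directly. Every vertex of the convex hull is one of the points $S_j^{(k)}$ with $0 \le j \le n$, $k \in \{1,2\}$, and the diameter is the largest distance between two such points. Hence, by the triangle inequality,
$$
D_n = \max_{\substack{0 \le j, j' \le n \\ k, k' = 1, 2}} \left\| S_j^{(k)} - S_{j'}^{(k')} \right\| \le 2 \max_{\substack{0 \leq j \leq n \\ k = 1, 2}} \left\| S_j^{(k)} \right\| \le 2 \max_{k = 1, 2} \sum_{j = 0}^{n} \left\| Z_j^{(k)} \right\| \le 2 \sum_{j = 0}^{n} \left( \left\| Z_j^{(1)} \right\| + \left\| Z_j^{(2)} \right\| \right),
$$
where the last two inequalities use $\|S_j^{(k)}\| \le \sum_{i=1}^{j}\|Z_i^{(k)}\|$. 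This is the exact diameter analogue of the chain of inequalities used for $L_n$ (with the constant $2$ in place of $2\pi$), so no new geometric input beyond the Cauchy formula \eqref{al:cauchy_form_diam} is required.

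Next I would divide by $n$ and invoke the strong law: since $(Z_j^{(1)})_{j=1}^{\infty}$ and $(Z_j^{(2)})_{j=1}^{\infty}$ are i.i.d.\ with finite first moment, we have almost surely
$$
\frac{1}{n}\sum_{j=1}^{n}\left( \left\| Z_j^{(1)} \right\| + \left\| Z_j^{(2)} \right\| \right) \xrightarrow[n\to\infty]{} \mathbb{E}\left[ \left\| Z_1^{(1)} \right\| + \left\| Z_1^{(2)} \right\| \right] < \infty,
$$
and the expectations $\mathbb{E}\left[n^{-1}\sum_{j=1}^{n}(\|Z_j^{(1)}\| + \|Z_j^{(2)}\|)\right]$ are constant in $n$ and equal the same finite limit. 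Thus $D_n/n$ is dominated (up to the factor $2$ and an asymptotically negligible $j=0$ term) by a nonnegative sequence that converges almost surely together with its means. Combining the almost sure convergence from \eqref{eq:as_kvg_Ln_Dn} with this domination, Pratt's lemma \cite[Theorem 5.5]{gut2006probability} yields the claimed $L^1$ convergence.

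There is essentially no substantive obstacle here; the only point requiring a moment's care is the passage from the diameter to the maximal norm, which I handle via the triangle inequality above, and the bookkeeping of the $j=0$ summand (which contributes $\|Z_0^{(k)}\|$, harmlessly, or can be absorbed by noting $S_0^{(k)}=\boldsymbol{0}$). Everything else is a verbatim transcription of the perimeter argument, so the proof can be stated compactly.
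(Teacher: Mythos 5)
Your proof is correct and follows essentially the same route as the paper: the paper likewise bounds $D_n \le 2\max_{j,k}\|S_j^{(k)}\| \le 2\sum_{j}(\|Z_j^{(1)}\|+\|Z_j^{(2)}\|)$ (via the Cauchy formula $D_n=\sup_\theta R_n(\theta)$ rather than your direct triangle-inequality bound on pairs of hull points, an immaterial difference) and then invokes the strong law and Pratt's lemma exactly as in Corollary \ref{cor:L1_cvg_per}. No gaps.
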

\begin{proof}
    Using the Cauchy formula from \eqref{al:cauchy_form_diam} we have
    \begin{align*}
        D_n = \sup_{0 \le \theta \le \pi} R_n(\theta) \le 2 \sup_{0 \le \theta \le \pi} \max_{\substack{0 \leq j \leq n \\ k = 1, 2}} |S_j^{(k)} \cdot e_{\theta}|  \le 2 \max_{\substack{0 \leq j \leq n \\ k = 1, 2}} \|S_j^{(k)}\| \le 2 \sum_{j = 0}^{n} \left( \|Z_j^{(1)}\| + \|Z_j^{(2)}\| \right).
    \end{align*}
    Using identical arguments as in the proof of Corollary \ref{cor:L1_cvg_per}, the claim follows.
\end{proof}

\subsection{Control of extrema}

Assuming \eqref{eq:diam_assumption} and, further, that $\|\boldsymbol{\mu}^{(1)}\|$ is the maximal element of the set $\{\|\boldsymbol{\mu}^{(1)}\|, \|\boldsymbol{\mu}^{(2)}\|, \|\boldsymbol{\mu}^{(1)} - \boldsymbol{\mu}^{(2)}\|\}$ (which can be done without the loss of generality) the diameter remains close to the drift direction of the first random walk. For $\delta > 0$ and $i \in \{1, \ldots, n\}$,  we define this event as follows
\begin{equation*}
A_{n, i}(\delta):=\left\{ \rho_H^1 \left( \left\{\theta^{(1)}\right\}, \;  \underset{0 \leq \theta \leq \pi}{\arg \max } \; R_n(\theta) \right) <\delta\right\} \cap\left\{\rho_H^1 \left( \left\{\theta^{(1)}\right\}, \;  \underset{0 \leq \theta \leq \pi}{\arg \max } \; R_n^{(i)}(\theta) \right) <\delta\right\}. 
\end{equation*}
 The key observation is that, with high probability, the angle of the diametral segment of the convex hull spanned by these two random  walks is close to the angle of the longest side of the triangle spanned by the corresponding drift vectors. 

\begin{thm} \label{lema s ani}
    Assume \eqref{eq:per_assumption}, \eqref{eq:diam_assumption}, the maximal element of the set from the assumption \eqref{eq:diam_assumption} is $\|\boldsymbol{\mu}^{(1)}\|$, and $\mathbb{E}[\|Z_i^{(k)}\|]<\infty$ for both $k \in \{1, 2\}$. Then, for arbitrary $\delta > 0$, 
    $$
    \lim _{n \rightarrow \infty} \min _{1 \leq i \leq n} \mathbb{P}\left(A_{n, i}(\delta)\right)=1. 
    $$
\end{thm}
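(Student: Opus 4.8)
The plan is to deduce everything from the strong law of large numbers (Theorem \ref{tm:SLLN}) together with the Cauchy representation \eqref{al:cauchy_form_diam}, reducing the statement to the fact that the maximizer of the range function $R_n$ concentrates at the single angle $\theta^{(1)}$. Write $A_\infty := \chull\{\boldsymbol 0, \boldsymbol\mu^{(1)}, \boldsymbol\mu^{(2)}\}$ and let $R_\infty(\theta) := \rho_{A_\infty}(\theta)$ be the width of $A_\infty$ in direction $\e_\theta$. By the Cauchy formula, $\sup_{\theta\in[0,\pi]} R_\infty(\theta) = \diam(A_\infty)$, which under \eqref{eq:diam_assumption} equals $\|\boldsymbol\mu^{(1)}\|$ and is realized by the segment $[\boldsymbol 0, \boldsymbol\mu^{(1)}]$. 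First I would check that, thanks to the uniqueness in \eqref{eq:diam_assumption} (and the normalization of Section \ref{GA} placing $\theta^{(1)}$ in the interior of $[0,\pi]$), the maximum of $R_\infty$ is attained at the single point $\theta^{(1)}$: any direction achieving width $\|\boldsymbol\mu^{(1)}\|$ must be parallel to a difference of two vertices of length $\|\boldsymbol\mu^{(1)}\|$, and \eqref{eq:diam_assumption} forces that difference to be $\pm\boldsymbol\mu^{(1)}$.

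The second step is a uniform law of large numbers for the range function. Setting $A_n := n^{-1}\chull\{S_j^{(k)} : 0\le j\le n,\ k=1,2\}$, one has $R_n(\theta)/n = \rho_{A_n}(\theta)$. Since the support function $u\mapsto \sup_{x\in A}x\cdot u$ is $1$-Lipschitz with respect to the Hausdorff metric and a width is the sum of two such values, I would bound
\begin{equation*}
\sup_{\theta\in[0,\pi]}\left| \frac{R_n(\theta)}{n} - R_\infty(\theta) \right| \le 2\,\rho_H^2(A_n, A_\infty),
\end{equation*}
and the right-hand side tends to $0$ almost surely by Theorem \ref{tm:SLLN}; in particular this supremum tends to $0$ in probability. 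For the third step, fix $\delta>0$ and use the unique maximizer together with continuity of $R_\infty$ on the compact interval $[0,\pi]$ to produce a gap
\begin{equation*}
\eta := R_\infty(\theta^{(1)}) - \sup\{R_\infty(\theta) : \theta\in[0,\pi],\ |\theta-\theta^{(1)}|\ge\delta\} > 0.
\end{equation*}
On the event $\{\sup_\theta |R_n(\theta)/n - R_\infty(\theta)| < \eta/2\}$ one has $R_n(\theta^{(1)}) > R_n(\theta)$ for every $\theta$ with $|\theta-\theta^{(1)}|\ge\delta$, so the (compact) set $\arg\max_\theta R_n(\theta)$ is contained in $(\theta^{(1)}-\delta,\theta^{(1)}+\delta)$; hence $\rho_H^1(\{\theta^{(1)}\}, \arg\max_\theta R_n(\theta)) < \delta$. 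Consequently $\mathbb{P}[\rho_H^1(\{\theta^{(1)}\}, \arg\max_\theta R_n) \ge \delta] \to 0$, and this probability does not depend on $i$.

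The only remaining issue is the uniformity over $i$ for the resampled term, and here I would exploit that the resampled walks are equal in law to the originals. The increment sequence of $(S_j^{(k,i)})_{j}$ is obtained from that of $(S_j^{(k)})_j$ by replacing the single increment $Z_i^{(k)}$ with its i.i.d.\ copy $\tilde Z_i^{(k)}$, so for every fixed $i$ the pair $\big((S_j^{(1,i)})_j,(S_j^{(2,i)})_j\big)$ has the same distribution as $\big((S_j^{(1)})_j,(S_j^{(2)})_j\big)$. Therefore $R_n^{(i)} \overset{d}{=} R_n$, and $\mathbb{P}[\rho_H^1(\{\theta^{(1)}\}, \arg\max_\theta R_n^{(i)}) \ge \delta]$ equals the $i$-free quantity from the previous step. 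A union bound then gives
\begin{equation*}
\max_{1\le i\le n}\mathbb{P}[A_{n,i}(\delta)^c] \le 2\,\mathbb{P}\!\left[\rho_H^1\big(\{\theta^{(1)}\}, \arg\max_\theta R_n(\theta)\big)\ge\delta\right] \xrightarrow[n\to\infty]{} 0,
\end{equation*}
which is exactly the claim $\min_{1\le i\le n}\mathbb{P}[A_{n,i}(\delta)]\to 1$.

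The conceptual heart is the passage from set-level Hausdorff convergence to control of the \emph{location} of the maximizer, i.e.\ combining the uniform convergence of Step 2 with the uniqueness coming from \eqref{eq:diam_assumption} in Step 3; this is where assumptions \eqref{eq:per_assumption}--\eqref{eq:diam_assumption} are genuinely used (they guarantee $\|\boldsymbol\mu^{(1)}\|>0$ is the strict maximal side, so $R_\infty$ has a sharp, isolated peak). The potential annoyance of a maximizer sitting on the boundary of $[0,\pi]$ for collinear drifts is absorbed by the normalization of Section \ref{GA}, while the uniformity in $i$ — the feature reflected in the $\min_{1\le i\le n}$ — becomes essentially free once the equidistribution of the resampled walks is observed.
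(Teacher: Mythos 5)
Your argument is correct, and it reaches the conclusion by a genuinely different route for the key localization step. The paper also starts from Theorem \ref{tm:SLLN} and ends with the same equidistribution-of-resampled-walks observation and the union bound $\min_i \mathbb{P}(A_{n,i}(\delta)) \ge 1 - 2\,\mathbb{P}[\rho_H^1(\{\theta^{(1)}\},\arg\max_\theta R_n(\theta))\ge\delta]$, but in between it proves a deterministic continuity theorem for the map $A\mapsto\arg\max_\theta\rho_A(\theta)$ on polygons with a unique diametral segment (Proposition \ref{neprekidnost dijametralnog segmenta}, the density Lemma \ref{lemma3.1. - gustoća}, and Corollary \ref{neprekidnost dijametralnog segments - korolar na proširenom prostoru}), and then invokes the continuous mapping theorem. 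You instead pass to support functions: the bound $\sup_\theta|n^{-1}R_n(\theta)-R_\infty(\theta)|\le 2\rho_H^2(A_n,A_\infty)$ turns Hausdorff convergence into uniform convergence of the width functions, and the standard ``gap'' argument (unique maximizer of the continuous limit on a compact domain, hence a positive margin $\eta$ over the complement of the $\delta$-neighborhood) localizes every maximizer of $R_n$ inside $(\theta^{(1)}-\delta,\theta^{(1)}+\delta)$. This is shorter and more elementary; it sidesteps the paper's vertex-angle estimates and the approximation-by-polygons-with-unique-diameter machinery entirely, at the price of being tailored to this one statement, whereas the paper's continuity result is a reusable standalone fact about the diametral-direction functional. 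Your identification of $\{\theta^{(1)}\}$ as the unique maximizer of $R_\infty$ under \eqref{eq:diam_assumption} is the same computation the paper relies on, and the one genuine edge case (the maximizing direction landing at $0$ or $\pi$, where $[0,\pi]$ double-counts a line direction) is glossed over by the normalization of Section \ref{GA} in both treatments, so you are not losing anything there.
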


Before proving Theorem \ref{lema s ani}, we introduce some notation and prove several auxiliary results we need. Denote by $(K^2, \rho_H^2)$ the space of all compact and convex sets in $\mathbb{R}^2$ equipped with the Hausdorff metric. Within this space, let $(P^2, \rho_H^2) \subseteq (K^2, \rho_H^2)$ represents the subset consisting of all compact and convex polygons. By convex polygon, we consider the convex hull of the finite set of at least two points. Further, define $(D^2, \rho_H^2) \subseteq (K^2, \rho_H^2)$ as the subset of convex and compact sets where the diameter is manifested along a unique segment. More formally, $D^2$ contains all convex and compact sets $A$ such that the set 
$$
     \underset{0 \leq \theta \leq \pi}{\arg \max } \; \rho_A(\theta)
$$ has exactly one element.
For $A\in P^2$, by $\operatorname{V}(A)$ we denote the set of its vertices.

\begin{proposition} \label{neprekidnost dijametralnog segmenta}
    The function $A \mapsto  \underset{0 \leq \theta \leq \pi}{\arg \max }\;\rho_A(\theta)$ is point-wise continuous on $(P^2\cap D^2, \rho_H^2)$. 
\end{proposition}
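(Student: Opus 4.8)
The plan is to reduce the statement to the standard principle that the argmax of a function depends continuously on the function when the maximiser is unique and the functions converge uniformly; here the objective is the projected-length function $\theta \mapsto \rho_A(\theta)$. Fix $A \in P^2\cap D^2$ and take any sequence $(A_n)$ in $P^2\cap D^2$ with $\rho_H^2(A_n,A)\to 0$. Since the function is single-valued on $D^2$, the maximisers $\theta^*:=\theta^*(A)$ and $\theta_n^*:=\theta^*(A_n)$ are well-defined, and I want to show $\theta_n^*\to\theta^*$.

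First I would record the analytic input. Writing $h_B(u)=\sup_{x\in B}x\cdot u$ for the support function, we have $\rho_B(\theta)=h_B(\e_\theta)+h_B(-\e_\theta)$, together with the standard identity $\sup_{\|u\|=1}|h_B(u)-h_C(u)|=\rho_H^2(B,C)$ (see \cite[Section 1.8]{schneider2014convex}). Applying this to each of the two terms gives the uniform bound
\begin{equation*}
    \sup_{\theta\in[0,\pi]}\bigl|\rho_{A_n}(\theta)-\rho_A(\theta)\bigr|\;\le\;2\,\rho_H^2(A_n,A),
\end{equation*}
whose right-hand side tends to $0$; thus $\rho_{A_n}\to\rho_A$ uniformly on $[0,\pi]$. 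Since each $h_B$ is finite and convex, hence continuous, and $\theta\mapsto\e_\theta$ is continuous, the limit $\rho_A$ is continuous; and the same uniform bound yields $\diam(A_n)=\max_\theta\rho_{A_n}(\theta)\to\max_\theta\rho_A(\theta)=\diam(A)$.

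Then I would run the subsequence argument, working on the circle of (undirected) directions $[0,\pi]$ with the endpoints $0$ and $\pi$ identified, since $\e_0$ and $\e_\pi$ span the same line and this is the domain on which uniqueness of the diametral direction is meaningful. This space is compact, so it suffices to show that every convergent subsequence of $(\theta_n^*)$ has limit $\theta^*$. Suppose $\theta_{n_k}^*\to\bar\theta$. On one hand $\rho_{A_{n_k}}(\theta_{n_k}^*)=\diam(A_{n_k})\to\diam(A)$. On the other hand,
\begin{equation*}
    \bigl|\rho_{A_{n_k}}(\theta_{n_k}^*)-\rho_A(\bar\theta)\bigr|\;\le\;\sup_{\theta}\bigl|\rho_{A_{n_k}}(\theta)-\rho_A(\theta)\bigr|+\bigl|\rho_A(\theta_{n_k}^*)-\rho_A(\bar\theta)\bigr|\;\longrightarrow\;0
\end{equation*}
by the uniform convergence and the continuity of $\rho_A$, so $\rho_{A_{n_k}}(\theta_{n_k}^*)\to\rho_A(\bar\theta)$. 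Comparing the two limits gives $\rho_A(\bar\theta)=\diam(A)=\max_\theta\rho_A(\theta)$, i.e.\ $\bar\theta$ is a maximiser of $\rho_A$. As $A\in D^2$, the maximiser is unique, whence $\bar\theta=\theta^*$. The limit being independent of the subsequence, $\theta_n^*\to\theta^*$, which is continuity of the map at $A$.

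The argument is essentially soft, and the only place requiring genuine care is the display in the last paragraph: the convergence $\rho_{A_{n_k}}(\theta_{n_k}^*)\to\rho_A(\bar\theta)$ must combine the \emph{uniform} convergence of the objectives with the \emph{pointwise} continuity of the limit objective evaluated at the \emph{moving} point $\theta_{n_k}^*$, and the triangle-inequality splitting shown above is exactly what makes this legitimate. The uniqueness hypothesis $A\in D^2$ is what prevents the diametral direction from jumping and is invoked only at the final step. Note that the polygon restriction $P^2$ is not actually needed for this proposition—only uniform continuity of support functions with respect to $\rho_H^2$ and uniqueness of the diametral direction are used—but I retain the stated domain, since that is where the result is subsequently applied.
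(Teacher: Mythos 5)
Your proof is correct, but it takes a genuinely different and considerably softer route than the paper's. The paper argues directly on the polygon: it labels the vertices, introduces the half-angles $\varphi_i$ at each vertex and the gap $\varepsilon_0$ between the largest and second-largest vertex distances, locates vertices of the perturbed polygon $B$ near the diametral vertices of $A$ by solving two auxiliary linear programs over $B$, and derives explicit quantitative bounds such as $\|b_{i_l}-a_{i_l}\|\le 2\delta/(\cos\overline\varphi\,\sin\underline\varphi)$; it first establishes continuity of the diametral \emph{segment} map $A\mapsto\overline{a_{i_1}a_{i_2}}$ and only then composes with the angle-extraction map. You instead reduce everything to the identity $\rho_H^2(B,C)=\sup_{\|u\|=1}|h_B(u)-h_C(u)|$, which yields $\sup_\theta|\rho_{A_n}(\theta)-\rho_A(\theta)|\le 2\rho_H^2(A_n,A)$, and then run the standard compactness/subsequence argument for the argmax of uniformly convergent objectives with a unique maximiser. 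What the paper's approach buys is an explicit modulus of continuity in terms of the polygon's geometry and control of the diametral segment itself; what yours buys is brevity, independence from the polygonal structure (so the result holds at points of $D^2$ inside all of $(K^2,\rho_H^2)$, which the paper only gestures at in a remark), and the fact that your subsequence argument never uses uniqueness of the maximiser for the \emph{approximating} sets --- so it essentially absorbs Lemma \ref{lemma3.1. - gustoća} and Corollary \ref{neprekidnost dijametralnog segments - korolar na proširenom prostoru} as well. Your handling of the endpoint identification $0\sim\pi$ is adequate: since $\rho_A(0)=\rho_A(\pi)$, the definition of $D^2$ forces the unique maximiser into the open interval $(0,\pi)$, so convergence on the circle of directions implies convergence in $[0,\pi]$.
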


\begin{proof}
    In $P^2 \cap D^2$, we investigate a particular mapping that associates each polygon within the space to the line segment where it attains its diameter. Formally, we focus on the mapping given by
    $$
    (P^2 \cap D^2, \rho_H^2) \ni A \mapsto \overline{{a}_{i_1} {a}_{i_2}} \in (K^2, \rho_H^2), 
    $$
    where ${a}_{i_1}$ and ${a}_{i_2}$ represent the vertices defining the diameter. Our objective is to verify the point-wise continuity of this mapping, that is, we aim to prove that for any given $\varepsilon > 0$, there exists a corresponding $\delta = \delta(\varepsilon, A) > 0$ such that: 
    $$
    \forall \ B \in (P^2 \cap D^2, \rho_H^2)\quad \text{such that} \quad \rho_H^2(A, B) < \delta \implies \rho_H^2 \left( \overline{{a}_{i_1} {a}_{i_2} \vphantom{b_{j_1} b_{j_2}}}, \overline{b_{j_1} b_{j_2} \vphantom{{a}_{i_1} {a}_{i_2}}} \right) < \varepsilon. 
    $$
    Let $\varepsilon > 0$ and $A \in (P^2 \cap D^2, \rho_H^2)$ be arbitrarily selected. Observe first that if $A$ is a line segment, than the continuity easily follows from the triangle inequality by taking $\delta = \varepsilon/3$. In what follows we focus on polygons having at least three vertices. For such a polygon, label the vertices as ${a}_1, \ldots, {a}_n$ arranged in counterclockwise order. At each vertex, introduce $u_i^{(1)}$ and $u_i^{(2)}$ as the unit vectors oriented along the respective adjacent edges, where $u_i^{(1)}$ is directed towards $a_{i-1}$ and $u_i^{(2)}$ is directed towards $a_{i+1}$ (considering the indices modulo $n$). Furthermore, let $\varphi_i$ denote the size of the angle between the vectors $u_i^{(1)}$ and $u_i^{(1)} + u_i^{(2)}$, or $u_i^{(2)}$ and $u_i^{(1)} + u_i^{(2)}$, see Figure \ref{fig:vrh-poligona}.

\begin{figure}[h]
    \centering
    \begin{tikzpicture}[scale = 2]
        \node[circle, fill, inner sep=1.5pt, label={right:$a_i$}] (ai) at (0,0) {};
        \node[circle, fill, inner sep=1.5pt, label={below:$a_{i+1}$}] (ai+1) at (-2,-1) {};
        \node[circle, fill, inner sep=1.5pt, label={below:$a_{i-1}$}] (ai-1) at (2,-1) {};
    
        \draw[-] (ai) -- (ai+1);
        \draw[-] (ai) -- (ai-1);
        
        \draw[->, line width=1.5pt] (ai) -- (-1,-0.5) node[above] {$u_{i}^{(2)}$};
        \draw[->, line width=1.5pt] (ai) -- (1,-0.5) node[above] {$u_{i}^{(1)}$};
        
        \draw[-] (0,-0.8) arc (270:322:1) node[midway, below] {$\varphi_i$};
        \draw[-] (0,-0.9) arc (270:212:1) node[midway, below] {$\varphi_i$};
        
        \draw[dashed] (0,0) -- (0,-1.5);
    \end{tikzpicture}
    \caption{Vertex of a polygon with related vectors and angles.}
    \label{fig:vrh-poligona}
\end{figure}
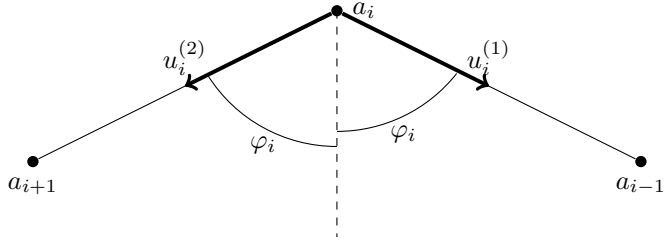
    
    Clearly, 
    \begin{align} \label{kutevi_varphi}
        \varphi_i \in \left( 0, \frac{\pi}{2} \right) 
    \end{align}
    for all $i \in \{1, \dots, n\}$. Therefore, we have that
 \begin{align*}
    \overline{\varphi} :&= \max \left\lbrace  \varphi_i :  i \in \{1, \dots, n\}\right\rbrace  \in \left( 0, \frac{\pi}{2} \right), \\
    \underline{\varphi} :&= \min \phantom{ } \left\lbrace \varphi_i :  i \in \{1, \dots, n\}\right\rbrace  \in \left( 0, \frac{\pi}{2} \right). 
\end{align*}
    Recall that ${a}_{i_1}, {a}_{i_2} \in \text{V}(A)$ is the unique pair of vertices for which 
    $$
    \operatorname{diam}(A) = \|a_{i_1} - a_{i_2}\|. 
    $$ 
    Therefore, the set
    $$
    \left\lbrace \|x - y\| : x, y \in V(A) \right\rbrace
    $$
    is finite, with a unique maximal value. Let $\varepsilon_0>0$ be defined as the difference between the two greatest values in this set. For $\delta > 0$ small enough (to be specified later), select an arbitrary polygon $B \in (P^2 \cap D^2, \rho_H^2)$ satisfying $\rho_H^2(A, B) < \delta$. Consequently, there exist points $b_{i_1}'$ and $b_{i_2}'$ in $\partial B$ such that 
    \begin{align*}
        \|a_{i_1} - b_{i_1}'\| < \delta, \qquad \textnormal{and} \qquad  \|a_{i_2} - b_{i_2}'\| < \delta. 
    \end{align*}
    However, it is worth noting that $b_{i_1}'$ and $b_{i_2}'$ are not necessarily vertices of $B$. To find vertices of $B$ that satisfy analogous relations (with modified right hand side) we proceed as follows. Consider two distinct linear optimization problems
    \begin{align*}
       (OP)_l =  \left\lbrace
            \begin{array}{c}
                 (u_{i_l}^{(1)} + u_{i_l}^{(2)})^T b \rightarrow \min \\
                 b \in B
            \end{array}
        \right. 
    \end{align*}
    for $l \in\{1, 2\}$. Given that $B$ is a convex polygon and the objective function under consideration is also convex, it follows that there exist $b_{i_1}, b_{i_2} \in V(B)$ for which $b_{i_l}$ is the solution to the $l$-th optimization problem, denoted as $(OP)_l$. Furthermore, $b_{i_l}$ must be situated in a right-angle triangle, one of whose catheti is the segment connecting 
    $$
    a_{i_l} + \delta \cdot \frac{u_{i_l}^{(1)} + u_{i_l}^{(2)}}{\|u_{i_l}^{(1)} + u_{i_l}^{(2)}\|}, \qquad \text{and} \qquad
    \ a_{i_l} - \frac{\delta}{\sin \varphi_{i_l}} \cdot \frac{u_{i_l}^{(1)} + u_{i_l}^{(2)}}{\|u_{i_l}^{(1)} + u_{i_l}^{(2)}\|}.
    $$
    Figure \ref{fig:ograda-na-kosinus-kuta} illustrates the reasoning. 
    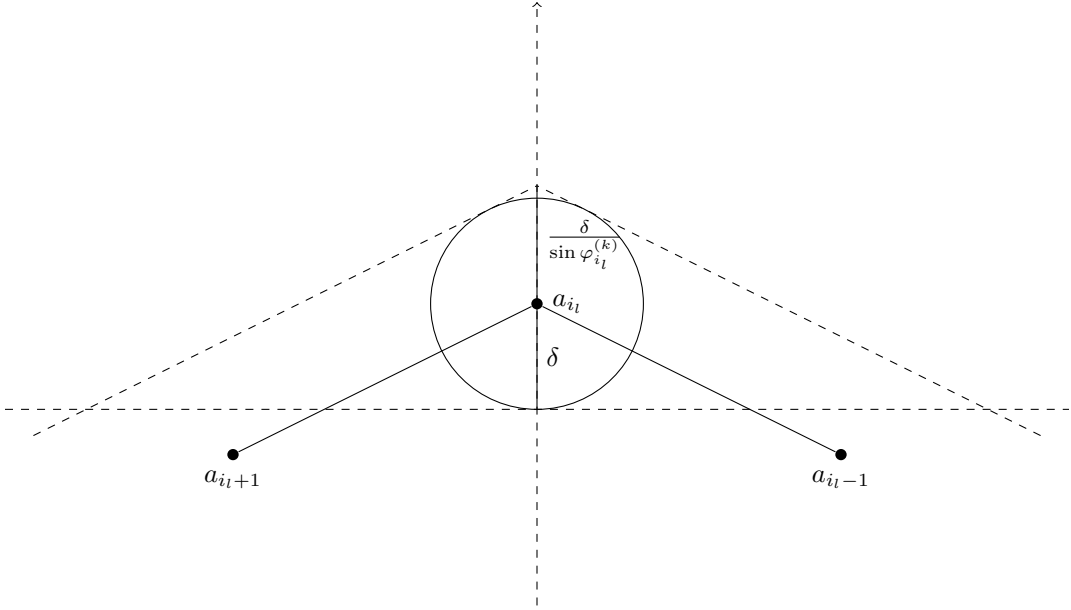
\begin{figure}[h!]
    \centering
\begin{tikzpicture}[scale = 2]
        \node[circle, fill, inner sep=1.5pt, label={right:$a_{i_l}$}] (ai) at (0,0) {};
        \node[circle, fill, inner sep=1.5pt, label={below:$a_{i_l+1}$}] (ai+1) at (-2,-1) {};
        \node[circle, fill, inner sep=1.5pt, label={below:$a_{i_l-1}$}] (ai-1) at (2,-1) {};
    
        \draw[-] (ai) -- (ai+1);
        \draw[-] (ai) -- (ai-1);
        
        %\draw[->, line width=1.5pt] (ai) -- (-1,-0.5) node[above] {$\vec{u}_{i}^{(2)}$};
        %\draw[->, line width=1.5pt] (ai) -- (1,-0.5) node[above] {$\vec{u}_{i}^{(1)}$};
        
        %\draw[->, dashed] (ai) -- (-0.5, 1) node[left] {$(u_i^{(2)})^\perp$};
        %\draw[->, dashed] (ai) -- (0.5, 1) node[right] {$(u_i^{(1)})^\perp$};

        %\draw[-] (0,-0.8) arc (270:322:1) node[midway, below] {$\varphi^{(1)}_i$};
        %\draw[-] (0,-0.9) arc (270:212:1) node[midway, below] {$\varphi^{(2)}_i$};
        
        \draw[->, dashed] (0,-2) -- (0,2);
        
        % Kružnica

        \draw (0,0) circle (0.7);

        % Paralelni lijevo i desno

        \draw[dashed] (-3.31304952, -0.87390097) -- (0,0.7826237921249);
        \draw[dashed] (3.31304952, -0.87390097) -- (0,0.7826237921249);

        % Donji pravac

        \draw[dashed] (-3.5,-0.7) -- (3.5,-0.7); 

        % \draw (0,0) -- (0.31304952, 0.62609903) node[midway, right] {$\delta$};
        \draw (0,0) -- (0,-0.7) node[midway, right] {$\delta$};

        \draw (0,0) -- (0,0.7826237921249) node[midway, right] {$\frac{\delta}{\sin \varphi_{i_l}^{(k)}}$};
        
    \end{tikzpicture}
    \caption{Vertex of a polygon with related vectors and angles}
    \label{fig:ograda-na-kosinus-kuta}
\end{figure}
    Hence, the maximal distance between $a_{i_l}$ and $b_{i_l}$ is bounded by the length of the hypotenuse of the left or right triangle. Therefore, it has to be
    \begin{align} \label{udaljenost bil i ail}
        \| b_{i_l} - a_{i_l} \| 
        \leq 
        \frac{\delta + \frac{\delta}{\sin \varphi_{i_l}}}{\cos \varphi_{i_l}}
        \leq 
        \frac{2 \delta}{\cos \varphi_{i_l} \cdot \sin \varphi_{i_l}} 
        \leq 
        \frac{2 \delta}{\cos \overline{\varphi} \cdot \sin \underline{\varphi}}. 
    \end{align}
    Consequently, by the triangle inequality, we establish a lower bound for the distance between $b_{i_1}$ and $b_{i_2}$ given by  
    \begin{align} \label{udaljenost bi1 i bi2}
        \|b_{i_1} - b_{i_2}\| \geq \|a_{i_1} - a_{i_2}\| - \frac{4 \delta}{\cos \overline{\varphi} \cdot \sin \underline{\varphi}} = \operatorname{diam}(A) - \frac{4 \delta}{\cos \overline{\varphi} \cdot \sin \underline{\varphi}}. 
    \end{align}
    Thus, it implies that 
    \begin{align} \label{donja ograda an diamB}
        \operatorname{diam}(B) \geq \operatorname{diam}(A) - \frac{4 \delta}{\cos \overline{\varphi} \cdot \sin \underline{\varphi}}.
    \end{align}
    Next, consider the vertices $b_{j_1}$ and $b_{j_2}$ of $B$ at which the polygon $B$ attains its diameter. Our objective is to demonstrate that one vertex is close to $a_{i_1}$ and the other is close to $a_{i_2}$. To this end, there must exist points $a_{j_1}, a_{j_2} \in \partial A$ (which are not necessarily vertices) such that 
    \begin{align} \label{udaljenost bjl i ajl}
        \| b_{j_l} - a_{j_l}\| < \delta
    \end{align}
    for both $l \in\{ 1, 2\}$. We have the following lower bound on the distance between $a_{j_1}$ and $a_{j_2}$ 
    \begin{align*}
        \|a_{j_1} - a_{j_2} \| \geq \|b_{j_1} - b_{j_2} \| - 2 \delta = \operatorname{diam}(B) - 2 \delta,
    \end{align*}
    by the triangle inequality. Together with (\ref{donja ograda an diamB}), we get
    \begin{align} \label{ograda na aj1 i aj2}
        \|a_{j_1} - a_{j_2} \| 
        \geq 
        \operatorname{diam}(A) -\delta \left(
        \frac{4}{\cos \overline{\varphi} \cdot \sin \underline{\varphi}} + 2
        \right) 
        > \operatorname{diam}(A) - \varepsilon_0, 
    \end{align}
    where the second inequality holds if we choose 
    $$
    \delta < \frac{\varepsilon_0}{\left(\dfrac{4}{\cos \overline{\varphi} \cdot \sin \underline{\varphi}}+2\right)}. 
    $$
    Consider the function $f_0: \partial A \times \partial A \rightarrow \mathbb{R}^+$ defined by $f_0(x, y) = \|x-y\|$. This function is continuous if the domain is equipped with the maximum of the relative norms. Since $A$ obtains its diameter at the unique segment, the relative maxima of $f_0$ occur at pairs of vertices, with the unique maximal value attained at the pairs $(a_{i_1}, a_{i_2})$ and $(a_{i_2}, a_{i_1})$. Given that $f_0(a_{j_1}, a_{j_2}) > \operatorname{diam}(A) - \varepsilon_0$, there exists some $\delta_0 > 0$ such that the point $(a_{j_1}, a_{j_2})$ lies within a $\delta_0$-neighborhood of either $(a_{i_1}, a_{i_2})$ or $(a_{i_2}, a_{i_1})$ in the relative topology. Without loss of generality, we can assume that $(a_{j_1}, a_{j_2})$ is in a $\delta_0$-neighborhood of $(a_{i_1}, a_{i_2})$. Consequently, we obtain that 
    \begin{align*}
        \|a_{j_l} - a_{i_l} \| < \delta_0, 
    \end{align*}
    for both $l \in\{ 1, 2\}$. Therefore, we get
    \begin{align*}
        \|b_{j_l} - a_{i_l} \| \leq \|b_{j_l} - a_{j_l} \| + \|a_{j_l} - a_{i_l} \|  < \delta + \delta_0 
    \end{align*}
    for both $l \in\{ 1, 2\}$. It is worth noting that $\delta_0$ is a function of $\delta$. However, as $\delta$ approaches zero, the quantity $\|a_{j_1} - a_{j_2}\|$ approaches $\operatorname{diam}(A)$, as can be seen from inequality (\ref{ograda na aj1 i aj2}). Given this relationship, we require that $\delta > 0$ is sufficiently small such that $\delta_0$ is also sufficiently small and that $\delta + \delta_0 < \varepsilon$. Hence, it follows that if $b_{j_1}$ and $b_{j_2}$ are the vertices defining the diameter of $B$ we have that 
    \begin{align*}
        \rho_H^2 \left( \overline{{a}_{i_1} {a}_{i_2} \vphantom{b_{j_1} b_{j_2}}}, \overline{b_{j_1} b_{j_2} \vphantom{{a}_{i_1} {a}_{i_2}}} \right) < \delta + \delta_0 < \varepsilon.
    \end{align*}
    Therefore, we have successfully demonstrated that the mapping $A \mapsto \overline{a_{i_1} a_{i_2}}$ is point-wise continuous. To complete the proof, note that the function 
    $$
    \left(P^2 \cap D^2, \rho_H^2\right) \ni A \mapsto 
     \underset{0 \leq \theta \leq \pi}{\arg \max }\; \rho_A(\theta) \in [0, \pi]
    $$
    can be written as the composition of 
    $$
    \left(P^2 \cap D^2, \rho_H^2\right) \ni A \mapsto \overline{a_{i_1} a_{i_2}} \in\left(K, \rho_H^2\right), 
    $$
    and
    $$
    \left(K, \rho_H^2\right)
    \setminus
    \left\{\overline{a b}: a, b \in \mathbb{R}^2, a=b\right\} \ni \overline{a_{i_1} a_{i_2}} \mapsto 
    \arctan \left| \frac{\pi_y(a_{i_2}) - \pi_y(a_{i_1})}{\pi_x(a_{i_2}) - \pi_x(a_{i_1})}  \right| \in [0, \pi], 
    $$
    where $\pi_x$ and $\pi_y$ are projections to the $x$ and $y$ axis respectively, with the understanding that the last expression equals $\pi/2$ if the denominator is zero. Both of these functions are continuous, and therefore their composition must also be continuous.
\end{proof}

In the following corollary, we show that the function $A \mapsto \arg\max_{0 \le \theta \le \pi}{\rho_A(\theta)}$ is continuous at points from $P^2 \cap D^2$ in the space $(P^2, \rho_H^2)$. We first show the following auxiliary lemma.
\begin{lemma} \label{lemma3.1. - gustoća}
    The set $P^2 \cap D^2$ is dense in $(P^2, \rho_H^2)$. 
\end{lemma}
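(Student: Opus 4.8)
The plan is to show that every convex polygon can be approximated arbitrarily well, in the Hausdorff metric, by a convex polygon whose diameter is realized along a single segment. The idea is to \emph{break ties}: if a polygon attains its diameter along several segments, a tiny outward push of one endpoint of one diametral segment makes that segment strictly longer than all competitors, and hence the unique diameter.

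Concretely, I would fix $A \in P^2$ and $\varepsilon > 0$, and set $d = \diam(A)$. Since the diameter of a convex body is always attained at a pair of extreme points, and the extreme points of a polygon are its vertices, there is a pair $a, b \in \operatorname{V}(A)$ with $\|a - b\| = d$ (and $d > 0$ because a polygon contains at least two distinct points). Writing $u = (a - b)/\|a - b\|$ for the unit vector along this segment, I fix $\eta \in (0, \varepsilon)$, set $a' = a + \eta\, u$, and define
$$
B = \chull\bigl( (\operatorname{V}(A) \setminus \{a\}) \cup \{a'\} \bigr) \in P^2.
$$
Two things then have to be verified. First, $\rho_H^2(A, B) \le \eta < \varepsilon$: every generator of $B$ coming from $\operatorname{V}(A) \setminus \{a\}$ lies in $B$, while $a$ lies within distance $\eta$ of $a' \in B$, so $\operatorname{V}(A) \subseteq B^\eta$; since $B^\eta$ is convex it contains $A = \chull(\operatorname{V}(A))$, and the symmetric inclusion $B \subseteq A^\eta$ follows the same way. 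Second, I must check $B \in D^2$.

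For the membership $B \in D^2$, the key is an equality analysis in the triangle inequality. As above, $\diam(B)$ is attained at a pair of vertices of $B$, and these belong to the generating set $(\operatorname{V}(A) \setminus \{a\}) \cup \{a'\}$. Any pair not involving $a'$ is at distance $\le d$, while for a generator $y \neq a$ one has
$$
\|a' - y\| \le \|a - y\| + \eta \le d + \eta ,
$$
and equality in the full chain forces both $\|a - y\| = d$ and $a - y = \|a-y\|\,u$, which together with $\|a-y\| = \|a-b\| = d$ gives $y = b$. Thus $\diam(B) = d + \eta$ is attained at the single pair $(a', b)$; in particular $a'$ and $b$ are each other's unique farthest points, hence both are vertices of $B$, and the diametral segment $\overline{a'b}$ is unique. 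Finally, since a direction $\theta$ satisfies $\rho_B(\theta) = \diam(B)$ exactly when some diametral pair is parallel to $\e_\theta$, the uniqueness of the diametral segment yields a unique maximizing direction, so $\arg\max_{0 \le \theta \le \pi} \rho_B(\theta)$ is a singleton and $B \in D^2$.

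The main obstacle is precisely this last uniqueness argument: one must rule out \emph{every} rival diametral pair created by the perturbation, which is where the strict equality case of the triangle inequality does the work, and one must take care that ``unique diametral segment'' is translated faithfully into the defining condition of $D^2$ (a single maximizing angle). Everything else — the Hausdorff estimate and the fact that diameters of polygons are attained at vertices — is routine.
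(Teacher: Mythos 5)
Your proof is correct and follows essentially the same route as the paper: perturb one endpoint of a diametral segment outward by a small amount so that this segment becomes the strict, unique maximizer, and note that the Hausdorff distance to the original polygon is at most the size of the perturbation. In fact your write-up is slightly more careful than the paper's, since you justify uniqueness of the new diametral pair via the equality case of the triangle inequality, which the paper merely asserts.
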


\begin{proof}
    Let $\varepsilon > 0$. For an arbitrary polygon $A \in (P^2, \rho_H^2)$, let $\theta'$ be an element of $\arg \max_{0 \leq \theta \leq \pi} \rho_A(\theta)$. Let the vertices of $A$ be denoted by $a_1, \ldots, a_n$ in a counterclockwise orientation. Assume that $a_{i_1}$ and $a_{i_2}$ are the vertices that correspond to the direction determined by $\theta'$. Note that some of the points $a_{i_l} + \varepsilon \mathbf{e}_{\theta'}$ lie in a direction opposite to the interior of $A$. Without  loss of generality, assume that $a_{i_1}$ is such a point. Consequently, the polygon
    $$
        A_{\varepsilon} := \operatorname{chull} \left\{ a_1, \ldots, a_{i_1 -1}, a_{i_1} + \varepsilon \mathbf{e}_{\theta'}, a_{i_1+1}, \ldots, a_n \right\},
    $$
    is an element of $P^2 \cap D^2$, and $\rho_H^2 (A, A_\varepsilon) = \varepsilon$. The statement is proven given that $\varepsilon > 0$ was chosen arbitrarily.
\end{proof}

\begin{remark} \label{remark3.1. - konstrukcija A_delta komentar}
    Observe that 
    $$
    \underset{0 \leq \theta \leq \pi}{\arg \max }\; \rho_{A_\varepsilon}(\theta) = \{ \theta' \}, 
    $$
    where $\theta'$ corresponds to the angle arbitrarily selected from $\arg \max_{0 \leq \theta \leq \pi} \rho_A(\theta)$. Thus, $A_{\varepsilon}$ is an element from $P^2 \cap D^2$ whose unique diameter is attained in the $\theta'$ direction, and at an arbitrarily small Hausdorff distance from $A$.
\end{remark}

\begin{corollary} \label{neprekidnost dijametralnog segments - korolar na proširenom prostoru}
    The function
    $$
        A \mapsto \underset{0 \leq \theta \leq \pi}{\arg \max } \;\rho_A(\theta),
    $$
is continuous at points from $P^2 \cap D^2$ in $(P^2, \rho_H^2)$.
\end{corollary}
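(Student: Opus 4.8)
The plan is to leverage Proposition \ref{neprekidnost dijametralnog segmenta}, which already gives continuity within the restricted space $(P^2\cap D^2,\rho_H^2)$, together with the density statement of Lemma \ref{lemma3.1. - gustoća} and the explicit construction of Remark \ref{remark3.1. - konstrukcija A_delta komentar}. Fix $A\in P^2\cap D^2$ and write $\arg\max_{0\le\theta\le\pi}\rho_A(\theta)=\{\theta_A\}$, a single point. Continuity at $A$ in the ambient space $(P^2,\rho_H^2)$ means that for every $\varepsilon>0$ there is $\delta>0$ such that any $B\in P^2$ (no longer assumed to lie in $D^2$) with $\rho_H^2(A,B)<\delta$ satisfies $\rho_H^1\!\left(\{\theta_A\},\,\arg\max_{0\le\theta\le\pi}\rho_B(\theta)\right)<\varepsilon$. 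The difficulty compared with the Proposition is that for $B\notin D^2$ the set $\arg\max\rho_B$ may contain more than one point, so I must control every element of it simultaneously.

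First I would fix $\varepsilon>0$ and invoke Proposition \ref{neprekidnost dijametralnog segmenta} to obtain $\delta>0$ (depending on $\varepsilon$ and $A$) such that every $C\in P^2\cap D^2$ with $\rho_H^2(A,C)<\delta$ has its unique diametral direction within $\varepsilon$ of $\theta_A$. Then I would take any $B\in P^2$ with $\rho_H^2(A,B)<\delta/2$ and any $\theta'\in\arg\max_{0\le\theta\le\pi}\rho_B(\theta)$. Applying the construction of Remark \ref{remark3.1. - konstrukcija A_delta komentar} to $B$ and this chosen $\theta'$ produces a polygon $B'\in P^2\cap D^2$ with $\arg\max\rho_{B'}=\{\theta'\}$ and $\rho_H^2(B,B')$ as small as desired, say $<\delta/2$. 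By the triangle inequality $\rho_H^2(A,B')<\delta$, so the Proposition forces $|\theta'-\theta_A|<\varepsilon$.

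Since $\theta'$ was an arbitrary element of $\arg\max\rho_B$, this shows at once that $\sup_{\theta'\in\arg\max\rho_B}|\theta'-\theta_A|\le\varepsilon$, and, because $\arg\max\rho_B$ is nonempty (the argmax of a continuous function on the compact interval $[0,\pi]$), also $\operatorname{dist}(\theta_A,\arg\max\rho_B)\le\varepsilon$; together these yield $\rho_H^1(\{\theta_A\},\arg\max\rho_B)\le\varepsilon$, which is exactly the asserted continuity. I expect the only genuinely delicate point to be the uniformity: one must verify that the single $\delta$ from the Proposition can be reused for every $\theta'$ in the possibly non-singleton set $\arg\max\rho_B$, which works precisely because the Remark realizes each such $\theta'$ as the unique diameter direction of a $D^2$-polygon lying within $\delta/2$ of $B$. (Alternatively, one could bypass the density argument by observing that $|\rho_A(\theta)-\rho_B(\theta)|\le 2\rho_H^2(A,B)$ uniformly in $\theta$, so $\rho_B\to\rho_A$ uniformly, and then applying the standard argmax-continuity argument using that $\theta_A$ is the unique, hence strict, maximizer of $\rho_A$; the route through the Proposition is preferred here because it keeps the polygonal geometry explicit.)
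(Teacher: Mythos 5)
Your proposal is correct and follows essentially the same route as the paper's own proof: fix $\delta$ from Proposition \ref{neprekidnost dijametralnog segmenta}, take $B\in P^2$ within $\delta/2$ of $A$, and for each maximizing direction $\theta'$ of $\rho_B$ use the construction of Lemma \ref{lemma3.1. - gustoća} and Remark \ref{remark3.1. - konstrukcija A_delta komentar} to produce a nearby polygon in $P^2\cap D^2$ realizing $\theta'$ as its unique diametral direction, then conclude by the triangle inequality. The only (harmless) difference is that you spell out both halves of the Hausdorff distance $\rho_H^1$ explicitly, and you sketch an alternative argument via uniform convergence of $\rho_B$ to $\rho_A$ that the paper does not pursue.
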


\begin{proof}

    Let $A \in P^2 \cap D^2$ and $\varepsilon > 0$ be arbitrary. By Proposition \ref{neprekidnost dijametralnog segmenta}, there exists $\delta > 0$ satisfying:
    \begin{align} \label{korolar3.1. - deltatilda}
        \forall \ {B} \in P^2 \cap D^2 \quad \text{such that} \quad \rho_H^2(A, B) < \tilde{\delta} \implies | \theta_A - \theta_B | < \varepsilon, 
    \end{align}
    where $\theta_A$ is uniquely determined as $\{\theta_A\} = \arg \max_{0 \leq \theta \leq \pi} \rho_A(\theta)$ (the same applies to $B$). Now, consider a polygon $B \in P^2$ with $\rho_H^2(A, B) < \delta/2$, and let $\theta_B\in\arg \max_{0 \leq \theta \leq \pi} \rho_B(\theta)$ be arbitrary. According to Lemma \ref{lemma3.1. - gustoća} and Remark \ref{remark3.1. - konstrukcija A_delta komentar}, we obtain the existence of a polygon $B_{\theta_B} \in P^2 \cap D^2$ satisfying 
    $$\underset{0 \leq \theta \leq \pi}{\arg \max} \;\rho_{B_{\theta_B}}(\theta)=\{\theta_B\},
    $$ 
    with $\rho_H^2(B, B_{\theta_B}) = \delta/2$. It follows that 
    $$
        \rho_H^2(A, B_{\theta_B}) \leq \rho_H^2(A, B) + \rho_H^2(B, B_{\theta_B}) < \delta/2 + \delta/2 = \delta.
    $$
    Applying equation (\ref{korolar3.1. - deltatilda}), it becomes evident that $|\theta_B - \theta_A| < \varepsilon$. As $\theta_B$ was selected arbitrarily, we deduce that 
    $$
    \rho_H^1 \left( \theta_A, \underset{0 \leq \theta \leq \pi}{\arg \max}\; \rho_{B}(\theta) \right) = \sup \left\{ | \theta_A - \theta_B|: \theta_B \in  \underset{0 \leq \theta \leq \pi}{\arg \max}\; \rho_{B}(\theta) \right\} < \varepsilon,
    $$
    thereby confirming the corollary.
\end{proof}

\begin{remark}
The density of $P^2$ in $(K^2, \rho_H^2)$ (see \cite{schneider1981approximation}) suggests that the previous proof could be adapted to $(K^2, \rho_H^2)$ as the domain of interest. Namely, every convex and compact subset of $\mathbb{R}^2$ can be arbitrarily well approximated with the convex polygon, and by Lemma \ref{lemma3.1. - gustoća} every convex polygon can be arbitrarily well approximated with the convex polygon with the unique diametrical segment. Hence, if we apply the triangle inequality twice, we would obtain the claimed statement. 

Furthermore, the preceding corollary does not offer insights into the continuity of the mapping when applied to other polygons in $P^2$. In fact, it is possible to show with relative ease that the function manifests discontinuities when evaluated on polygons characterized by two or more diametrical segments.
\end{remark}

We are now in position to prove Theorem \ref{lema s ani}.
\begin{proof}[Proof of Theorem \ref{lema s ani}]
    From Theorem \ref{tm:SLLN}, we have that
    \begin{align} \label{teorem - prvo konvergencija ljuski}
        n^{-1} \operatorname{chull}\left\{S_j^{(k)}: 0 \leq j \leq n, k=1,2\right\} 
        \xrightarrow[n \to \infty]{a.s.}
        \operatorname{chull}\left\{\{\boldsymbol{0}\} \cup\left\{\boldsymbol{\mu}^{(k)}: k=1,2\right\}\right\} . 
    \end{align}
    Denote by $A$ the right hand side in \eqref{teorem - prvo konvergencija ljuski}. Since $A \in P^2 \cap D^2$, we find that 
    $$
        \underset{0 \leq \theta \leq \pi}{\arg \max } \;\rho_A(\theta) = \{ \theta^{(1)} \}.
    $$
    Furthermore, denote
    $$
        A_n = n^{-1} \operatorname{chull}\left\{S_j^{(k)}: 0 \leq j \leq n, \ k=1,2\right\}.
    $$
    Using Corollary \ref{neprekidnost dijametralnog segments - korolar na proširenom prostoru} with the continuous mapping theorem (see \cite[Theorem 3.2.10]{durrett2019probability}) yields the following
    \begin{align*}
        \underset{0 \leq \theta \leq \pi}{\arg \max } \ \rho_{A_n}(\theta)  
        \xrightarrow[n \to \infty]{a.s.}
        \theta^{(1)}. 
    \end{align*}
    It is worth noting that scaling does not impact the direction of the diametrical segment. Consequently, it holds that 
    $$
    \underset{0 \leq \theta \leq \pi}{\arg \max }\;\rho_{A_n}(\theta) = \underset{0 \leq \theta \leq \pi}{\arg \max } \; R_n(\theta).
    $$
    As a result, for a given $\delta > 0$, there exists an almost surely finite random variable $N_\delta$ such that:
    $$
    n \geq N_\delta \implies \rho_H^1\left(\left\{ \theta^{(1)} \right\}, \underset{0 \leq \theta \leq \pi}{\arg \max }\;R_n(\theta)\right) < \delta.
    $$
    Therefore, we get that 
    \begin{align} \label{konvergencija za rho_H^1}
         \mathbb{P} \left( \rho_H^1\left(\left\{ \theta^{(1)} \right\}, \underset{0 \leq \theta \leq \pi}{\arg \max } \;R_n(\theta)\right)<\delta \right) 
         \geq \mathbb{P}( n \geq N_\delta )  
         \rightarrow 
         \mathbb{P}( N_\delta < \infty ) = 1.  
    \end{align}
    It is important to observe that the distribution of 
    $$
    \underset{0 \leq \theta \leq \pi}{\arg \max }\;R_n^{(i)}(\theta)
    $$ 
    coincides with that of $\arg \max _{0 \leq \theta \leq \pi} R_n(\theta)$. Finally, we have that
    $$
    \min _{1 \leq i \leq n} \mathbb{P}\left(A_{n, i}(\delta)\right)
    \geq
    1 - 2 \mathbb{P} \left( \rho_H^1\left(\left\{ \theta^{(1)} \right\}, \underset{0 \leq \theta \leq \pi}{\arg \max }\;R_n(\theta)\right) \geq \delta \right),
    $$
    which concludes the proof.
\end{proof}

\subsection{Approximation lemma for diameter}

The main goal of this subsection is to prove an analogue of Lemma \ref{approximation lemma} for the diameter. More precisely, we aim to compare $\mathcal{D}_{n, i}$ with appropriately centered and projected $i$-th step of the walk with the dominating drift (this can be first walk, second walk, or the difference walk, but, without loss of generality, as before, we assume that this is the first walk). As in the case of the perimeter, the proof of the lemma depends on our earlier assumptions regarding the spatial orientation of the drift vectors relative to the $y$-axis. We again consider only the case where the $y$-axis is situated between the drift vectors. Notice further that the assumption that $\|\boldsymbol{\mu}^{(1)}\|$ is the maximal element of the set $\{\|\boldsymbol{\mu}^{(1)}\|, \|\boldsymbol{\mu}^{(2)}\|, \|\boldsymbol{\mu}^{(1)}\| - \|\boldsymbol{\mu}^{(2)}\|\}$ positions the drift vector $\boldsymbol{\mu}^{(1)}$ in the region $\Theta_{(1>2>0)}^{\varepsilon}$.

Having in mind the previous discussion, it is clear that for a given, sufficiently small, $\delta > 0$, we can select and fix sufficiently small $\varepsilon = \varepsilon(\delta)$ such that
$$
(\theta^{(1)} - \delta, \theta^{(1)} + \delta) \subseteq \Theta_{(1>2>0)}^{\varepsilon}.
$$ 

In the following lemma we describe the behavior of the parametrized range function in this interval.

\begin{lemma} \label{lemma3.2.}
    Let $\gamma \in(0,1 / 2)$. Then for $\delta > 0$ and $\varepsilon > 0$ from the upper discussion, and any $i \in I_{n, \gamma}$, on $E_{n, i}(\varepsilon, \gamma)$,
\begin{equation*}
    \left|\sup _{|\theta-\theta^{(1)}| \leq \delta} R_n(\theta)-\sup _{|\theta-\theta^{(1)}| \leq \delta} R_n^{(i)}(\theta)-\left(Z_i^{(1)}-\tilde{Z}_i^{(1)}\right) \cdot \e_{\theta^{(1)}}\right| \leq 2\delta\left\|Z_i^{(1)}-\tilde{Z}_i^{(1)}\right\| .
\end{equation*}
\end{lemma}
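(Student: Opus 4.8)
The plan is to reduce the statement to a clean pointwise identity for $R_n(\theta)-R_n^{(i)}(\theta)$ on the interval $J:=[\theta^{(1)}-\delta,\theta^{(1)}+\delta]$, and then to control the two suprema by the elementary inequality $\bigl|\sup_J f-\sup_J g\bigr|\le\sup_J|f-g|$, followed by a Lipschitz estimate for $\theta\mapsto\e_\theta$.

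First I would use the paragraph preceding the lemma: the choice $\varepsilon=\varepsilon(\delta)$ guarantees $J\subseteq\Theta_{(1>2>0)}^{\varepsilon}\subseteq\Theta^\varepsilon_{\operatorname{I}}$. Hence Proposition \ref{proposition - control of delta_n_i - perimeter}(i) applies on this interval, and since on $E_{n,i}(\varepsilon,\gamma)$ we have $\overline{\I}_n(\theta)=1$ for every $\theta\in\Theta_{(1>2>0)}^{\varepsilon}$, it yields, for all $\theta\in J$,
\begin{equation*}
    \Delta_n^{(i)}(\theta)=R_n(\theta)-R_n^{(i)}(\theta)=\bigl(Z_i^{(1)}-\tilde{Z}_i^{(1)}\bigr)\cdot\e_\theta .
\end{equation*}
Writing $W:=Z_i^{(1)}-\tilde{Z}_i^{(1)}$, this gives the identity $R_n(\theta)=R_n^{(i)}(\theta)+W\cdot\e_\theta$ on $J$, valid on the event $E_{n,i}(\varepsilon,\gamma)$.

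Next I would absorb the constant $W\cdot\e_{\theta^{(1)}}$ into the resampled supremum, i.e.\ $\sup_{\theta\in J}R_n^{(i)}(\theta)+W\cdot\e_{\theta^{(1)}}=\sup_{\theta\in J}\bigl[R_n^{(i)}(\theta)+W\cdot\e_{\theta^{(1)}}\bigr]$. The quantity to be estimated then becomes the difference of the suprema over $J$ of the two functions $\theta\mapsto R_n^{(i)}(\theta)+W\cdot\e_\theta$ and $\theta\mapsto R_n^{(i)}(\theta)+W\cdot\e_{\theta^{(1)}}$, so that
\begin{equation*}
    \left|\sup_{\theta\in J}R_n(\theta)-\sup_{\theta\in J}R_n^{(i)}(\theta)-W\cdot\e_{\theta^{(1)}}\right|\le\sup_{\theta\in J}\bigl|W\cdot(\e_\theta-\e_{\theta^{(1)}})\bigr|.
\end{equation*}
Finally, Cauchy--Schwarz gives $\bigl|W\cdot(\e_\theta-\e_{\theta^{(1)}})\bigr|\le\|W\|\,\|\e_\theta-\e_{\theta^{(1)}}\|$, and since $\e_\theta,\e_{\theta^{(1)}}$ are unit vectors subtending angle $|\theta-\theta^{(1)}|\le\delta$, one has $\|\e_\theta-\e_{\theta^{(1)}}\|=2\sin\!\bigl(|\theta-\theta^{(1)}|/2\bigr)\le|\theta-\theta^{(1)}|\le\delta\le2\delta$, which closes the estimate.

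I do not expect a genuine obstacle here; the only points requiring care are applying the Proposition \emph{uniformly} over all $\theta\in J$ (which is exactly why $J\subseteq\Theta_{(1>2>0)}^{\varepsilon}$ is arranged beforehand) and manipulating the suprema through the inequality $\bigl|\sup f-\sup g\bigr|\le\sup|f-g|$ rather than attempting to differentiate or locate the maximizers. The final geometric bound is in fact slightly sharper than the stated $2\delta\|Z_i^{(1)}-\tilde{Z}_i^{(1)}\|$, so the claim follows comfortably.
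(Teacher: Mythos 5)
Your proposal is correct and follows essentially the same route as the paper: both rest on the pointwise identity $R_n(\theta)-R_n^{(i)}(\theta)=(Z_i^{(1)}-\tilde Z_i^{(1)})\cdot\e_\theta$ on $J\subseteq\Theta^{\varepsilon}_{(1>2>0)}$ supplied by Proposition \ref{proposition - control of delta_n_i - perimeter}, an elementary comparison of suprema (the paper's $\inf_I(f-g)\le\sup_I f-\sup_I g\le\sup_I(f-g)$ versus your $|\sup f-\sup g|\le\sup|f-g|$ after absorbing the constant, which are equivalent), and the Lipschitz bound $|x\cdot\e_{\theta}-x\cdot\e_{\theta^{(1)}}|\le\|x\|\,|\theta-\theta^{(1)}|$. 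Your closing observation that the bound can be sharpened to $\delta\|Z_i^{(1)}-\tilde Z_i^{(1)}\|$ is accurate but immaterial to the stated claim.
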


\begin{proof}
We assert that for every $i$ belonging to the set $I_{n, \gamma}$, and for any $\theta_1$ and $\theta_2$ within the interval $\left(\theta^{(1)}-\delta, \theta^{(1)}+\delta\right)$ satisfying $\theta_1<\theta_2$, the following condition holds on the event $E_{n, i}(\varepsilon, \gamma)$:
\begin{equation} \label{eq:razlika_supremuma_1}
  \inf_{\theta_1 \leq \theta \leq \theta_2}\left(Z_i^{(1)}-\tilde{Z}_i^{(1)}\right) \cdot \mathbf{e}_\theta 
\leq 
\sup_{\theta_1 \leq \theta \leq \theta_2} R_n(\theta)
-
\sup _{\theta_1 \leq \theta \leq \theta_2} R_n^{(i)}(\theta) 
\leq 
\sup_{\theta_1 \leq \theta 
\leq\theta_2}\left(Z_i^{(1)}-\tilde{Z}_i^{(1)}\right) \cdot \mathbf{e}_\theta .  
\end{equation}
Furthermore, it can be easily verified that for any $x \in \mathbb{R}^2$, and any $\theta_1, \theta_2 \in\mathbb{R}$, the following holds
\begin{equation*}
\left|{x} \cdot \mathbf{e}_{\theta_1}-{x} \cdot \mathbf{e}_{\theta_2}\right| 
\leq
\|{x}\|
\mid \theta_1-\theta_2 \mid .
\end{equation*}
From this, we have that
$$
\begin{aligned}
\sup_{\theta_1 \leq \theta 
\leq\theta_2} \left(Z_i^{(1)}-\tilde{Z}_i^{(1)}\right) \cdot \mathbf{e}_\theta
 & = \sup_{\theta_1 \leq \theta 
\leq\theta_2} \left(Z_i^{(1)}-\tilde{Z}_i^{(1)}\right) \cdot (\mathbf{e}_{\theta^{(1)}} + \mathbf{e}_\theta - \mathbf{e}_{\theta^{(1)}}) \\
& \leq\left(Z_i^{(1)}-\tilde{Z}_i^{(1)}\right) \cdot \mathbf{e}_{\theta^{(1)}}+2\delta\left\|Z_i^{(1)}-\tilde{Z}_i^{(1)}\right\|.
\end{aligned}
$$
By analogous argumentation we conclude
\begin{equation*}
    \inf_{\theta_1 \leq \theta 
\leq\theta_2} \left(Z_i^{(1)}-\tilde{Z}_i^{(1)}\right) \cdot \mathbf{e}_\theta \geq\left(Z_i^{(1)}-\tilde{Z}_i^{(1)}\right) \cdot \mathbf{e}_{\theta^{(1)}}-2\delta\left\|Z_i^{(1)}-\tilde{Z}_i^{(1)}\right\|.
\end{equation*}
The assertion of the lemma follows by taking $\theta_1=\theta^{(1)}-\delta$ and $\theta_2=\theta^{(1)}+\delta$. We are left to prove \eqref{eq:razlika_supremuma_1}. Observe that for functions $f, g: \mathbb{R} \rightarrow \mathbb{R}$, satisfying $\sup _{\theta \in I}|f(\theta)|<\infty$ and $\sup _{\theta \in I}|g(\theta)|<\infty$, for $I \subseteq \mathbb{R}$,
\begin{equation*}
\inf _{\theta \in I}(f(\theta)-g(\theta)) \leq \sup _{\theta \in I} f(\theta)-\sup _{\theta \in I} g(\theta) \leq \sup _{\theta \in I}(f(\theta)-g(\theta)) .
\end{equation*}
In particular, if $I=[\theta_1, \theta_2]$ with $\theta_1, \theta_2 \in (\theta^{(1)}-\delta, \theta^{(1)}+\delta)$, we have
$$
\inf _{\theta_1 \leq \theta \leq \theta_2}\left(R_n(\theta)-R_n^{(i)}(\theta)\right) \leq \sup _{\theta_1 \leq \theta \leq \theta_2} R_n(\theta)-\sup _{\theta_1 \leq \theta \leq \theta_2} R_n^{(i)}(\theta) \leq \sup _{\theta_1 \leq \theta \leq \theta_2}\left(R_n(\theta)-R_n^{(i)}(\theta)\right). 
$$
Moreover, on the event $E_{n, i}(\varepsilon, \gamma)$, according to Proposition \ref{proposition - control of delta_n_i - perimeter}, for all $\theta \in [\theta_1, \theta_2 ]$ we have
$$
R_n(\theta)-R_n^{(i)}(\theta)=\left(Z_i^{(1)}-\tilde{Z}_i^{(1)}\right) \cdot \mathbf{e}_\theta.
$$
This proves claim \eqref{eq:razlika_supremuma_1}.
\end{proof}

We are now ready to prove the approximation result for $\mathcal{D}_{n, i}$. First, let us denote 
$$
B_{n, i}(\gamma, \delta) := E_{n, i}(\varepsilon(\delta), \gamma) \cap A_{n, i}(\delta).
$$ 

\begin{lemma} \label{lemma - approximation lemma - diameter}
    Assume that $E[\|Z_1^{(k)}\|] < \infty$ for both $k \in \{1,2\}$. Let $\gamma \in (0,1/2)$, and let $ \varepsilon$ and $ \delta$ be as in the previous lemma.  Then, for any $i \in I_{n, \gamma}$, the following inequality holds a.s.
    \begin{equation}\label{approximation lemma - inequality}
    \begin{aligned}
        \left|\mathcal{D}_{n, i}-(Z_i^{(1)}-\boldsymbol{\mu}^{(1)}) \cdot \e_{\theta^{(1)}}\right|
        & \leq  
        3\left( \|Z_i^{(1)}\| +  \| Z_i^{(2)}\| \right) \mathbb{P}\left(B_{n, i}^{\mathrm{c}}(\gamma, \delta) \mid \mathcal{F}_i\right) + 2\delta\left(\|Z_i^{(1)}\|+\mathbb{E}\|Z_1^{(1)}\|\right)
        \\
        & \quad +
        3 \mathbb{E}\left[ \left( \|\tilde{Z}_i^{(1)}\| + \|\tilde{Z}_i^{(2)}\|\right) \mathbf{1}\left(B_{n, i}^{\mathrm{c}}(\gamma, \delta)\right) \mid \mathcal{F}_i\right].
    \end{aligned}
    \end{equation}
\end{lemma}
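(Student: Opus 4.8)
The plan is to split $\mathcal{D}_{n,i}$ according to the event $B_{n,i}(\gamma,\delta)=E_{n,i}(\varepsilon(\delta),\gamma)\cap A_{n,i}(\delta)$ and its complement,
$$\mathcal{D}_{n,i}=\mathbb{E}\left[(D_n-D_n^{(i)})\mathbf{1}(B_{n,i}(\gamma,\delta))\mid\mathcal{F}_i\right]+\mathbb{E}\left[(D_n-D_n^{(i)})\mathbf{1}(B_{n,i}^c(\gamma,\delta))\mid\mathcal{F}_i\right],$$
treating the good-event piece via the geometry controlled by $A_{n,i}(\delta)$ and $E_{n,i}(\varepsilon,\gamma)$, and absorbing the bad-event piece into the error terms through a crude uniform bound on $|D_n-D_n^{(i)}|$.

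First I would record the crude bound. From the Cauchy formula $D_n=\sup_{0\le\theta\le\pi}R_n(\theta)$, the elementary inequality $|\sup_\theta f-\sup_\theta g|\le\sup_\theta|f-g|$, and Lemma \ref{lemma - omedjenost integrabilnom varijablom} applied to $\Delta_n^{(i)}(\theta)=R_n(\theta)-R_n^{(i)}(\theta)$, one obtains $|D_n-D_n^{(i)}|\le 2(\|Z_i^{(1)}\|+\|\tilde{Z}_i^{(1)}\|+\|Z_i^{(2)}\|+\|\tilde{Z}_i^{(2)}\|)$ a.s. Using that the $Z_i^{(k)}$ are $\mathcal{F}_i$-measurable while the $\tilde{Z}_i^{(k)}$ are independent of $\mathcal{F}_i$, this bounds the bad-event piece by $2(\|Z_i^{(1)}\|+\|Z_i^{(2)}\|)\mathbb{P}(B_{n,i}^c(\gamma,\delta)\mid\mathcal{F}_i)+2\mathbb{E}[(\|\tilde{Z}_i^{(1)}\|+\|\tilde{Z}_i^{(2)}\|)\mathbf{1}(B_{n,i}^c(\gamma,\delta))\mid\mathcal{F}_i]$.

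The good-event piece is where the main idea enters. The key observation is that on $A_{n,i}(\delta)$ every maximizer of $R_n$ (and of $R_n^{(i)}$) over $[0,\pi]$ lies within $\delta$ of $\theta^{(1)}$, so the supremum over $[0,\pi]$ is attained in the $\delta$-neighborhood and hence $D_n=\sup_{|\theta-\theta^{(1)}|\le\delta}R_n(\theta)$ and $D_n^{(i)}=\sup_{|\theta-\theta^{(1)}|\le\delta}R_n^{(i)}(\theta)$ there. Since $B_{n,i}(\gamma,\delta)\subseteq A_{n,i}(\delta)\cap E_{n,i}(\varepsilon,\gamma)$, Lemma \ref{lemma3.2.} then gives, on $B_{n,i}(\gamma,\delta)$,
$$\left|(D_n-D_n^{(i)})-(Z_i^{(1)}-\tilde{Z}_i^{(1)})\cdot\e_{\theta^{(1)}}\right|\le 2\delta\|Z_i^{(1)}-\tilde{Z}_i^{(1)}\|.$$
Thus I would write $(D_n-D_n^{(i)})\mathbf{1}(B_{n,i})=(Z_i^{(1)}-\tilde{Z}_i^{(1)})\cdot\e_{\theta^{(1)}}\mathbf{1}(B_{n,i})$ plus a remainder of modulus at most $2\delta\|Z_i^{(1)}-\tilde{Z}_i^{(1)}\|\mathbf{1}(B_{n,i})$, whose conditional expectation is bounded by $2\delta(\|Z_i^{(1)}\|+\mathbb{E}\|Z_1^{(1)}\|)$.

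Finally I would re-insert the missing part of the indicator. Since $\tilde{Z}_i^{(1)}$ is independent of $\mathcal{F}_i$ with mean $\boldsymbol{\mu}^{(1)}$, we have $\mathbb{E}[(Z_i^{(1)}-\tilde{Z}_i^{(1)})\cdot\e_{\theta^{(1)}}\mid\mathcal{F}_i]=(Z_i^{(1)}-\boldsymbol{\mu}^{(1)})\cdot\e_{\theta^{(1)}}$, so passing from $\mathbf{1}(B_{n,i})$ to $1$ costs $\mathbb{E}[(Z_i^{(1)}-\tilde{Z}_i^{(1)})\cdot\e_{\theta^{(1)}}\mathbf{1}(B_{n,i}^c)\mid\mathcal{F}_i]$, which is bounded by $\|Z_i^{(1)}\|\mathbb{P}(B_{n,i}^c\mid\mathcal{F}_i)+\mathbb{E}[\|\tilde{Z}_i^{(1)}\|\mathbf{1}(B_{n,i}^c)\mid\mathcal{F}_i]$. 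Collecting the three contributions and bundling coefficients — the $\mathbb{P}(B_{n,i}^c\mid\mathcal{F}_i)$ terms total at most $3(\|Z_i^{(1)}\|+\|Z_i^{(2)}\|)$ and the resampled-indicator terms total at most $3\mathbb{E}[(\|\tilde{Z}_i^{(1)}\|+\|\tilde{Z}_i^{(2)}\|)\mathbf{1}(B_{n,i}^c)\mid\mathcal{F}_i]$ — yields exactly \eqref{approximation lemma - inequality}. The only genuinely delicate point is the geometric reduction $D_n=\sup_{|\theta-\theta^{(1)}|\le\delta}R_n(\theta)$ on $A_{n,i}(\delta)$, which rests on reading $A_{n,i}(\delta)$ as the assertion that the entire argmax set lies in the $\delta$-neighborhood of $\theta^{(1)}$; the rest is careful, but routine, conditioning and bookkeeping.
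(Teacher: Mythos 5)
Your proposal is correct and follows essentially the same route as the paper: the same crude bound from Lemma \ref{lemma - omedjenost integrabilnom varijablom} on the bad event, the same use of $A_{n,i}(\delta)$ to localize the suprema to the $\delta$-neighborhood of $\theta^{(1)}$ and of Lemma \ref{lemma3.2.} there, and the same conditioning facts; the only difference is cosmetic (you split by the event first and re-insert the linear term afterwards, while the paper subtracts $(Z_i^{(1)}-\tilde{Z}_i^{(1)})\cdot\e_{\theta^{(1)}}$ before splitting), and the constants come out identically.
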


\begin{proof}
Given that the random variable $Z_i^{(1)}$ is $\mathcal{F}_i$-measurable and that $\tilde{Z}_i^{(1)}$ is independent of $\mathcal{F}_i$, it follows that
$$
\mathcal{D}_{n, i}-\left(Z_i^{(1)}-\boldsymbol{\mu}^{(1)}\right) \cdot \e_{\theta^{(1)}}
=
\mathbb{E}\left[D_n-D_n^{(i)}-\left(Z_i^{(1)}-\tilde{Z}_i^{(1)}\right) \cdot \e_{\theta^{(1)}} \mid \mathcal{F}_i\right],
$$
from which it follows
$$
\begin{aligned}
\left|\mathcal{D}_{n, i}-\left(Z_i^{(1)}-\boldsymbol{\mu}^{(1)}\right) \cdot \e_{\theta^{(1)}}\right| 
&\leq 
\mathbb{E}\left[\left|D_n-D_n^{(i)}-\left(Z_i^{(1)}-\tilde{Z}_i^{(1)}\right) \cdot \e_{\theta^{(1)}}\right| \mathbf{1}\left(B_{n, i}(\gamma, \delta)\right) \mid \mathcal{F}_i\right] \\
&\quad +\mathbb{E}\left[\left|D_n-D_n^{(i)}-\left(Z_i^{(1)}-\tilde{Z}_i^{(1)}\right) \cdot \e_{\theta^{(1)}}\right| \mathbf{1}\left(B_{n, i}^{\mathrm{c}}(\gamma, \delta)\right) \mid \mathcal{F}_i\right] .
\end{aligned}
$$
From Lemma \ref{lemma - omedjenost integrabilnom varijablom}, we next establish
$$
\mathbb{E}\left[\left|D_n-D_n^{(i)}-\left(Z_i^{(1)}-\tilde{Z}_i^{(1)}\right) \cdot \e_{\theta^{(1)}}\right| \mathbf{1}\left(B_{n, i}^{\mathrm{c}}(\gamma, \delta)\right) \mid \mathcal{F}_i\right] \leq  \mathbb{E}\left[S \cdot  \mathbf{1}\left(B_{n, i}^{\mathrm{c}}(\gamma, \delta)\right) \mid \mathcal{F}_i\right], 
$$
where
$$
S = 3\left(\left\|Z_i^{(1)}\right\|+\left\|\tilde{Z}_i^{(1)}\right\|\right)+2 \left(\left\|Z_i^{(2)}\right\|+\left\|\tilde{Z}_i^{(2)}\right\|\right).  
$$
Now, on the event $A_{n, i}(\delta)$, we have that
$$
D_n=\sup _{|\theta-\theta^{(1)}| \leq \delta} R_n(\theta), \qquad \text { and } \qquad D_n^{(i)}=\sup _{|\theta-\theta^{(1)}| \leq \delta} R_n^{(i)}(\theta),
$$
and hence, by Lemma \ref{lemma3.2.}, on the event $B_{n, i}(\gamma, \delta)$,
$$
\left|D_n-D_n^{(i)}-\left(Z_i^{(1)}-\tilde{Z}_i^{(1)}\right) \cdot \e_{\theta^{(1)}}\right| \leq 2\delta\left\|Z_i^{(1)}-\tilde{Z}_i^{(1)}\right\| .
$$
Consequently
$$
\mathbb{E}\left[\left| D_n-D_n^{(i)}-\left(Z_i^{(1)}-\tilde{Z}_i^{(1)}\right) \cdot \e_{\theta^{(1)}}\right| \mathbf{1}\left(B_{n, i}(\gamma, \delta)\right) \mid \mathcal{F}_i\right] \leq 2\delta \mathbb{E}\left[\left\|Z_i^{(1)}\right\|+\left\|\tilde{Z}_i^{(1)}\right\| \mid \mathcal{F}_i\right].
$$
\end{proof}

Let us denote $V_i := (Z_i^{(1)} - \boldsymbol{\mu}^{(1)}) \cdot \e_{\theta^{(1)}}$, and $U_{n, i} := \mathcal{D}_{n, i} - V_i$. In the following lemma we show that the error term $U_{n,i}$ is $L^2$-negligible under the scaling $\sqrt{n}$.

\begin{lemma} \label{lemma5.1. - diameter - sum goes to 0}
Assume \eqref{eq:per_assumption}, \eqref{eq:diam_assumption} and that $\E[\|Z_1^{(k)}\|^2]<\infty$ for both $k\in \{1,2\}$. Then
\begin{equation*}
    \lim _{n \rightarrow \infty} \frac{1}{n} \sum_{i=1}^n \mathbb{E}\left[U_{n, i}^2\right]=0. 
\end{equation*}
\end{lemma}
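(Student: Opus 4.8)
The plan is to follow the template of the perimeter estimate in Lemma \ref{sum goes to 0}, with the parameter $\delta$ now playing the role that $\lambda((\Theta^\varepsilon)^c)$ played there, and with the diameter approximation furnished by Lemma \ref{lemma - approximation lemma - diameter}. Fix a target $\varepsilon_1 > 0$. First I would record a crude uniform $L^2$ bound: from the Cauchy formula \eqref{al:cauchy_form_diam}, conditional Jensen, and Lemma \ref{lemma - omedjenost integrabilnom varijablom}, one gets
$$
|\mathcal{D}_{n,i}| = \left|\E\left[D_n - D_n^{(i)} \mid \mathcal{F}_i\right]\right| \le \E\left[\sup_{\theta}|\Delta_n^{(i)}(\theta)| \mid \mathcal{F}_i\right] \le 2\left(\|Z_i^{(1)}\| + \E\|Z_1^{(1)}\| + \|Z_i^{(2)}\| + \E\|Z_1^{(2)}\|\right),
$$
while $|V_i| \le \|Z_i^{(1)}\| + \E\|Z_1^{(1)}\|$. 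Since $\E[\|Z_1^{(k)}\|^2] < \infty$, this yields a constant $C_0$ with $\E[U_{n,i}^2] \le C_0$ for all $n,i$. As $\mathrm{card}(I_{n,\gamma}^c) \le 2\gamma n$, the tail contribution satisfies $\frac{1}{n} \sum_{i \notin I_{n,\gamma}} \E[U_{n,i}^2] \le 2\gamma C_0$, which is made $< \varepsilon_1$ by choosing $\gamma$ small.

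For the bulk $i \in I_{n,\gamma}$ I would invoke Lemma \ref{lemma - approximation lemma - diameter}, whose right-hand side consists of three terms, and the order of the choices is the delicate point. I would first fix $\delta$ (and therewith $\varepsilon = \varepsilon(\delta)$) so small that the deterministic term $2\delta(\|Z_i^{(1)}\| + \E\|Z_1^{(1)}\|)$ is negligible in $L^2$, which is legitimate because $4\delta^2\,\E[(\|Z_1^{(1)}\| + \E\|Z_1^{(1)}\|)^2] \to 0$ as $\delta \to 0$. Next, for the resampled term $3\,\E[(\|\tilde Z_i^{(1)}\| + \|\tilde Z_i^{(2)}\|)\mathbf{1}(B_{n,i}^c) \mid \mathcal{F}_i]$ I would truncate at a level $B_1$: splitting on $\{\|\tilde Z_i^{(1)}\| > B_1\} \cup \{\|\tilde Z_i^{(2)}\| > B_1\}$ and using that $\tilde Z_i^{(k)}$ is independent of $\mathcal{F}_i$, the large part becomes a deterministic constant that vanishes as $B_1 \to \infty$ by dominated convergence, while the small part is dominated by $2B_1\,\mathbb{P}(B_{n,i}^c \mid \mathcal{F}_i)$. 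The net effect, exactly as in Lemma \ref{sum goes to 0}, is an a.s.\ bound of the shape
$$
|U_{n,i}| \le C\left(1 + \|Z_i^{(1)}\| + \|Z_i^{(2)}\| + \E\|Z_1^{(1)}\| + \E\|Z_1^{(2)}\|\right)\left(B_1\,\mathbb{P}(B_{n,i}^c \mid \mathcal{F}_i) + \delta\right).
$$
Squaring, using $\mathbb{P}(B_{n,i}^c \mid \mathcal{F}_i)^2 \le \mathbb{P}(B_{n,i}^c \mid \mathcal{F}_i)$, and taking expectations, a further truncation of $\|Z_i^{(k)}\|$ at a level $B_2$ (admissible by the second moments) separates the $\mathbb{P}(B_{n,i}^c)$-dependent part, where one uses $\E[\mathbb{P}(B_{n,i}^c \mid \mathcal{F}_i)] = \mathbb{P}(B_{n,i}^c)$.

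Finally, with $\delta, \varepsilon, B_1, B_2$ now fixed, I would appeal to the uniform smallness of $\mathbb{P}(B_{n,i}^c)$. Since $B_{n,i}(\gamma,\delta) = E_{n,i}(\varepsilon(\delta),\gamma) \cap A_{n,i}(\delta)$, a union bound gives $\max_{1 \le i \le n}\mathbb{P}(B_{n,i}^c) \le \max_i \mathbb{P}(E_{n,i}^c) + \max_i \mathbb{P}(A_{n,i}^c)$, and both maxima tend to $0$ by Proposition \ref{proposition - control of delta_n_i - perimeter}(ii) and Theorem \ref{lema s ani}. Hence for $n$ large one has $\E[U_{n,i}^2] \le \mathrm{const}\cdot\varepsilon_1$ uniformly over $i \in I_{n,\gamma}$, so $\frac{1}{n}\sum_{i \in I_{n,\gamma}} \E[U_{n,i}^2] \le \mathrm{const}\cdot\varepsilon_1$; combining with the tail estimate and letting $\varepsilon_1 \downarrow 0$ finishes the proof. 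I expect the only genuine obstacle to be bookkeeping the order of the limits: because $\varepsilon$ is slaved to $\delta$ through the geometry, the deterministic $\delta$-term must be dispatched \emph{before} one is allowed to send $n \to \infty$ in the $\mathbb{P}(B_{n,i}^c)$ terms; every remaining step is a routine truncation argument already executed for the perimeter in Lemma \ref{sum goes to 0}.
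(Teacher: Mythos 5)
Your proposal is correct and follows essentially the same route as the paper's proof: the same crude uniform $L^2$ bound splitting off $I_{n,\gamma}^c$, the same use of Lemma \ref{lemma - approximation lemma - diameter} with truncations of $\tilde Z_i^{(k)}$ and $Z_i^{(k)}$ at levels $B_1$ and $B_2$, the same fixing of $\delta$ (hence $\varepsilon(\delta)$) before sending $n\to\infty$, and the same appeal to Theorem \ref{lema s ani} and Proposition \ref{proposition - control of delta_n_i - perimeter} to make $\max_i \mathbb{P}(B_{n,i}^c)$ small. The ordering of choices you flag as the delicate point is exactly the ordering the paper uses.
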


\begin{proof}
    For a given $\varepsilon \in (0,1)$, let $\gamma \in (0, 1/2)$ and $\delta > 0$ be sufficiently small, the specifics of which will be clarified later. From Lemma \ref{lemma - omedjenost integrabilnom varijablom}, we have that
    $$
    \left|U_{n, i}\right| \leq 
    3\left(\|Z_i^{(1)}\|+\mathbb{E}\|Z_1^{(1)}\| + \|Z_i^{(2)}\|+\mathbb{E}\|Z_1^{(2)}\|\right).
    $$
    Thus, $\mathbb{E}\left( U_{n, i}^2 \right) \leq C_0$ for all $n$ and all $i$, and for some constant $C_0>0$ whose value depends solely on the distributions of $Z_i^{(k)}$. Therefore, we can conclude that 
$$
\frac{1}{n} \sum_{i \notin I_{n, \gamma}} \mathbb{E}\left(U_{n, i}^2\right) \leq 2 \gamma C_0 .
$$
We choose and fix $\gamma > 0$ sufficiently small to ensure that $2 \gamma C_0 < \varepsilon$. Now, for $i \in I_{n, \gamma}$, Lemma \ref{lemma - approximation lemma - diameter} provides an upper bound on $\left| U_{n, i} \right|$. Note that for any constant $C_1^{(k)}>0$, given that $\tilde{Z}_i^{(k)}$ is independent of $\mathcal{F}_i$, we can conclude the following
$$
\mathbb{E}\left[\|\tilde{Z}_i^{(k)}\| \mathbf{1}\left(B_{n, i}^{\mathrm{c}}(\gamma, \delta)\right) \mid \mathcal{F}_i\right] 
\leq 
\mathbb{E}\left[\|Z_i^{(k)}\| \mathbf{1}\left(\|Z_i^{(k)}\| \geq C_1^{(k)}\right)\right]
+
C_1^{(k)} \mathbb{P}\left[B_{n, i}^{\mathrm{c}}(\gamma, \delta) \mid \mathcal{F}_i\right]. 
$$
Given $\varepsilon \in (0,1)$, we can select $C_1 = C_1(\varepsilon)>0$ sufficiently large such that 
$$
\mathbb{E}\left[\| Z_i^{(k)} \| \mathbf{1}\left( \| Z_i^{(k)} \| \geq C_1 \right)\right] < \varepsilon
$$
for both $k \in \{1, 2\}$. For the sake of convenience, we also choose $C_1 > 1$ and $C_1 > \mathbb{E}[\| Z_1^{(k)} \|]$ for both $k \in \{1, 2\}$. Consequently, by Lemma \ref{lemma - approximation lemma - diameter}, we obtain that
$$
\left|U_{n, i}\right| \leq 3\left(\|Z_i^{(1)}\| + \|Z_i^{(2)}\|+2C_1\right) \mathbb{P}\left[B_{n, i}^{\mathrm{c}}(\gamma, \delta) \mid \mathcal{F}_i\right]
+6 \varepsilon
+2\delta\left(\|Z_i^{(1)}\|+\mathbb{E}\|Z_1^{(1)}\|\right). 
$$
Using $\mathbb{P}\left[B_{n, i}^{\mathrm{c}}(\gamma, \delta) \mid \mathcal{F}_i\right] \leq 1$, $\varepsilon \leq 1$, $\delta \leq 1$, and the elementary inequality $(a + b + c)^2 \le 3(a^2 + b^2 + c^2)$ for positive $a, b, c \in \mathbb{R}$, we conclude
\begin{equation*}
U_{n, i}^2 
\leq 
27 \left( \|Z_i^{(1)}\| + \|Z_i^{(2)}\|  + 2C_1 \right)^2 
\mathbb{P}\left[B_{n, i}^{\mathrm{c}}(\gamma, \delta) \mid \mathcal{F}_i\right]
+
108 \varepsilon
+
12 \delta \left( \|Z_i^{(1)}\| + \E \|Z_1^{(1)}\| \right)^2.
\end{equation*}
By assumption, for a given $\varepsilon$, there is $\delta$ small enough such that

$$
12 \delta \left( \|Z_i^{(1)}\| + \E \|Z_1^{(1)}\| \right)^2 < \varepsilon.
$$
We shall fix such $\delta>0$ for the remainder of the discussion. We then have
$$
\mathbb{E}\left(U_{n, i}^2\right) 
\leq 
27 \E \left[ \left( \|Z_i^{(1)}\| + \|Z_i^{(2)}\|  + 2C_1 \right)^2 
\mathbb{P}\left[B_{n, i}^{\mathrm{c}}(\gamma, \delta) \mid \mathcal{F}_i\right] \right] 
+
109 \varepsilon. 
$$
Next, for any $C_2>0$, we have that
\begin{align*}
&\E \left[ \left( \|Z_i^{(1)}\| + \|Z_i^{(2)}\|  + 2C_1 \right)^2 
\mathbb{P}\left[B_{n, i}^{\mathrm{c}}(\gamma, \delta) \mid \mathcal{F}_i\right] \right]\\
&\leq
C_2^2 \mathbb{P}\left(B_{n, i}^{\mathrm{c}}(\gamma, \delta)\right) 
+
\mathbb{E}\left[ \left( \|Z_i^{(1)}\| + \|Z_i^{(2)}\|  + 2C_1 \right)^2 
\mathbf{1}\left( \|Z_i^{(1)}\| + \|Z_i^{(2)}\|  + 2C_1 \geq C_2\right)\right],
\end{align*}
and using the dominated donvergence theorem, it is possible to choose a value $C_2$ sufficiently large such that the last term is less than ${\varepsilon}/{27}$. Consequently, with this choice of $C_2$, we have
$$
\mathbb{E}\left(U_{n, i}^2\right) \leq 110 \varepsilon+27 C_2^2\mathbb{P}\left(B_{n, i}^{\mathrm{c}}(\gamma, \delta)\right). 
$$
By Theorem \ref{lema s ani} and Proposition \ref{proposition - control of delta_n_i - perimeter}, we conclude
$$
\lim_{n\to\infty}\max _{1 \leq i \leq n} \mathbb{P}\left(B_{n, i}^{\mathrm{c}}(\gamma, \delta)\right) =0, 
$$
so that, for given $\varepsilon>0$ (and hence $C_1$ and $C_2$) we may select $n_0 \in \mathbb{N}$ sufficiently large so that $\max _{i \in I_{n, \gamma}} \mathbb{E}\left(U_{n, i}^2\right) \leq 111 \varepsilon$, for $n \ge n_0$. Consequently,
$$
\frac{1}{n} \sum_{i \in I_{n, \gamma}} \mathbb{E}\left(U_{n, i}^2\right) \leq 111 \varepsilon,
$$
for all $n \geq n_0$. Combining this with the earlier approximation for $i \notin I_{n, \gamma}$, we arrive at
$$
\frac{1}{n} \sum_{i=1}^n \mathbb{E}\left(U_{n, i}^2\right) \leq 112 \varepsilon,
$$
for all $n \geq n_0$. Since $\varepsilon > 0$ was arbitrarily chosen, the conclusion follows. 
\end{proof}

\subsection{Proofs of the main results}

The proofs of the main results for the diameter in the largest part follow as in the perimeter case. In the proof of Theorem \ref{theorem - diameter L2 convergence} instead of using Lemma \ref{lemma - MDS - perimeter} and Lemma \ref{sum goes to 0}, we employ Lemma \ref{lemma - MDS - diameter} and Lemma \ref{lemma5.1. - diameter - sum goes to 0}. The proof of Theorem \ref{theorem - variance asymptotic - diameter} follows analogously as the proof of Theorem \ref{theorem - variance asymptotic - perimeter} with the only difference being in the definition of the sequence $(\zeta_n)_{n = 1}^{\infty}$:
\begin{equation}
    \zeta_n = \frac{1}{\sqrt{n}} \sum_{i = 1}^{n} \left( Z_i^{(1)} - \boldsymbol{\mu}^{(1)} \right) \cdot \e_{\theta^{(1)}}.
\end{equation}
The proof of the Central Limit Theorem presented in Theorem \ref{theorem - CLT - diameter} remains the same, by replacing the constant $\sigma_L$ with the constant $\sigma_D$.

\section{Final discussions}\label{sec:final_discussions}

In this section, we turn our attention to certain situations that may have remained unexplained. To begin with, an interesting question arises when we assume that $\|\boldsymbol{\mu}^{(1)} - \boldsymbol{\mu}^{(2)}\|$ is the unique maximal element of the set from the assumption \eqref{eq:diam_assumption}. Under this assumption, Theorem \ref{theorem - diameter L2 convergence} should be restated in the following form
$$
n^{-1 / 2}\left|D_n-\mathbb{E} D_n- \left( \left(S_n^{(1)}-\mathbb{E} S_n^{(1)}\right) - \left(S_n^{(2)}-\mathbb{E} S_n^{(2)}\right)  \right)  \cdot \mathbf{e}_{\theta}\right| \xrightarrow[n \to \infty]{L^2} 0. 
$$
Here, $\theta \in [0, \pi)$ denotes the angle corresponding to the direction of $\boldsymbol{\mu}^{(1)} - \boldsymbol{\mu}^{(2)}$. Consequently, both random walks under consideration contribute to the asymptotic behavior of the diameter in this particular scenario.

Furthermore, the analysis of the diameter of the convex hull spanned by two random walks can be adapted to the more general scenario involving $m$ independent random walks. The asymptotic behavior of the diameter is controlled by the geometric characteristics of the set
$$
\left\lbrace \|x-y\| : {x}, {y} \in \{0\} \cup \left\{ \boldsymbol{\mu}^{(k)} : k = 1, \dots, m \right\} \right\rbrace.
$$
Namely, if this set possesses a unique maximal element, it can be demonstrated that
$$
n^{-1 / 2}\left|D_n-\mathbb{E} D_n- \left( \left(S_n^{(k_{1})}-\mathbb{E} S_n^{(k_1)}\right) - \left(S_n^{(k_2)}-\mathbb{E} S_n^{(k_2)}\right)  \right)  \cdot \mathbf{e}_{\theta}\right| \xrightarrow[n \to \infty]{L^2} 0,
$$
where $k_1, k_2 \in \{1, \dots, m\}$ are selected such that $\|\boldsymbol{\mu}^{(k_1)} - \boldsymbol{\mu}^{(k_2)}\|$ is the (unique) largest element in the set mentioned above, and $\theta$ keeps its role as the angle corresponding to the direction $\boldsymbol{\mu}^{(k_1)} - \boldsymbol{\mu}^{(k_2)}$. Similarly, the present discussion does not readily extend to scenarios involving multiple maximal elements within the set.

%\begin{figure}[h!]
 %   \centering
  %  \begin{tikzpicture}[scale=1.5]
    
      % Draw axes
   %   \draw[->] (-2.5,0) -- (2.5,0) node[right] {$x$};
    %  \draw[->] (0,-3.5) -- (0,2.5) node[above] {$y$};
    
      % Draw points
      %\fill (0,0) circle (4pt) node[anchor=north east] {$A$};
      %\fill (1,1) circle (4pt) node[anchor=north west] {$B$};
      %\fill (-1,2) circle (4pt) node[anchor=south] {$C$};
      %\fill (-2,-2) circle (4pt) node[anchor=north] {$D$};
      %\fill (0,-3) circle (4pt) node[anchor=north] {$E$};
      %\fill (2,-1) circle (4pt) node[anchor=north west] {$F$};
    
      % Draw vectors
     % \draw[->, thick] (0,0) -- (1,1); 
      %\draw[->, thick] (0,0) -- (-1,2); 
%      \draw[->, thick] (0,0) -- (-2,-2); 
 %     \draw[->, thick] (0,0) -- (0,-3); 
  %    \draw[->, thick] (0,0) -- (2,-1); 
   %   \draw[->, thick] (0,0) -- (1,-1); 
    %  \draw[->, thick] (0,0) -- (-1,0); 

      % Draw convex hull
  %    \draw[blue] (1,1) -- (-1,2) -- (-2,-2) -- (0,-3) -- (2,-1) -- cycle;
    
   % \end{tikzpicture}

    %\caption{The convex hull of seven drift vectors - the limiting distribution is expected to be Gaussian since $0$ is contained in the interior of the convex hull.}
%    \label{fig:perimeter - multiple walks - Gaussian limit}
% \end{figure}

On the other hand, the problem of the perimeter of the convex hull generated by multiple independent random walks is much more demanding. To more effectively handle the extrema ($M_n(\theta)$ and $m_n(\theta)$), one might consider employing an alternative version of the Cauchy formula for the perimeter, given by
$$
L_n =  \int_0^{2 \pi} M_n(\theta) \text{d} \theta. 
$$
Yet, the specific random walks contributing to this integral depend upon the geometric properties of the convex hull formed by their respective drift vectors. If the convex hull of these drift vectors coincides with the convex hull of a subset (of the original set) of drift vectors where all are non-zero, the argument above can be adjusted to arrive at a Gaussian limiting distribution. An example of such a setting is presented in Figure \ref{fig:different situations of k0 walks} on the right graph. On the other hand, if a zero-drift random walk exists and zero lies on the boundary of the convex hull, a non-Gaussian limit can be anticipated. An example of such a setting is presented in Figure \ref{fig:different situations of k0 walks} on the left graph.

% \begin{figure}[h!]
 %   \centering
  %  \begin{tikzpicture}[scale=1.5]
    
      % Draw axes
    %  \draw[->] (-1.5,0) -- (1.5,0) node[right] {$x$};
     % \draw[->] (0,-0.5) -- (0,2.5) node[above] {$y$};
    
      % Draw points
      %\fill (0,0) circle (4pt) node[anchor=north east] {$A$};
      %\fill (1,1) circle (4pt) node[anchor=north west] {$B$};
      %\fill (-1,2) circle (4pt) node[anchor=south] {$C$};
      %\fill (-2,-2) circle (4pt) node[anchor=north] {$D$};
      %\fill (0,-3) circle (4pt) node[anchor=north] {$E$};
      %\fill (2,-1) circle (4pt) node[anchor=north west] {$F$};
    
      % Draw vectors
     % \draw[->, thick] (0,0) -- (1,1); 
     % \draw[->, thick] (0,0) -- (-1,2); 
     % \draw[->, thick] (0,0) -- (0,2); 

      % Draw convex hull
     % \draw[blue] (0,0) -- (1,1) -- (0,2) -- (-1,2) -- cycle;

     % \fill[blue] (0,0) circle (1pt); 

    %\end{tikzpicture}

    %\caption{The convex hull of four drift vectors - there exists a zero-drift random walk, and $0$ is on the boundary of the convex hull.  The simulations show that the limiting distribution is non-Gaussian. }
    %\label{fig:perimeter - multiple walks - non-Gaussian limit}
%\end{figure}

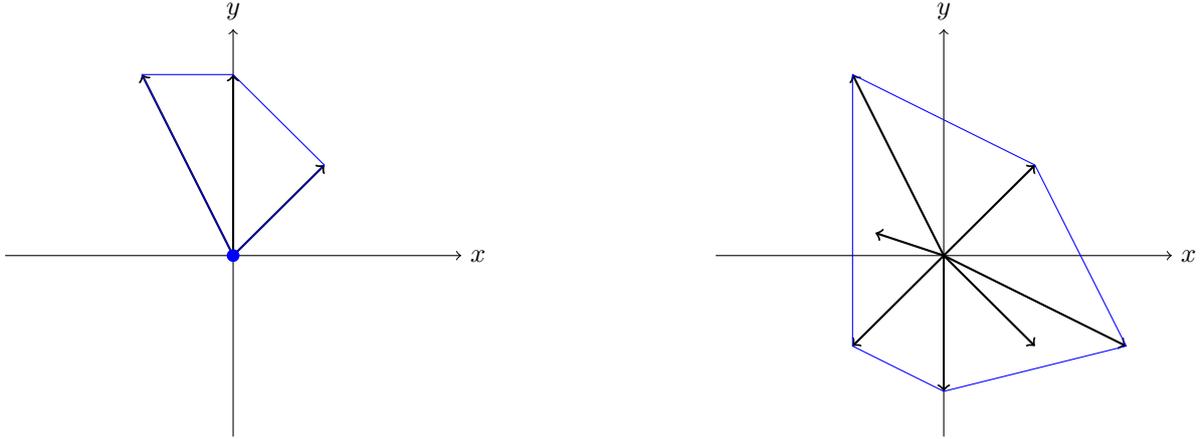
\begin{figure}[h!]
  \centering
  \begin{minipage}{0.45\textwidth}
    \centering
    \begin{tikzpicture}[scale=1.2]
      % Draw axes
      \draw[->] (-2.5,0) -- (2.5,0) node[right] {$x$};
      \draw[->] (0,-2) -- (0,2.5) node[above] {$y$};
      % Draw vectors
      \draw[->, thick] (0,0) -- (1,1); 
      \draw[->, thick] (0,0) -- (-1,2); 
      \draw[->, thick] (0,0) -- (0,2); 
      % Draw convex hull
      \draw[blue] (0,0) -- (1,1) -- (0,2) -- (-1,2) -- cycle;
      \fill[blue] (0,0) circle (2pt); 
    \end{tikzpicture}
  \end{minipage}
  \hfill
  \begin{minipage}{0.45\textwidth}
    \centering
    \begin{tikzpicture}[scale=1.2]
      % Draw axes
      \draw[->] (-2.5,0) -- (2.5,0) node[right] {$x$};
      \draw[->] (0,-2) -- (0,2.5) node[above] {$y$};
      % Draw vectors
      \draw[->, thick] (0,0) -- (1,1); 
      \draw[->, thick] (0,0) -- (-1,2); 
      \draw[->, thick] (0,0) -- (-1,-1); 
      \draw[->, thick] (0,0) -- (0,-1.5); 
      \draw[->, thick] (0,0) -- (2,-1); 
      \draw[->, thick] (0,0) -- (1,-1); 
      \draw[->, thick] (0,0) -- (-0.75,0.25); 
      % Draw convex hull
      \draw[blue] (1,1) -- (-1,2) -- (-1,-1) -- (0,-1.5) -- (2,-1) -- cycle;
    \end{tikzpicture}
    
  \end{minipage}
  \caption{Different positions of drift vectors}
  \label{fig:different situations of k0 walks}
\end{figure}

\section{Simulation study and open problems}\label{sec:simulations}

In \cite{wade2015convex2} it has been shown that for a single zero-drift planar random walk, $(L_n - \mathbb{E}[L_n])/\sqrt{n}$ converges in distribution to a non-degenerate non-Gaussian limit. We conjecture a similar phenomena in the case of two planar random walks when the assumption \eqref{eq:per_assumption} is not satisfied, that is, $\boldsymbol{0} \in \{\boldsymbol{\mu}^{(1)}, \boldsymbol{\mu}^{(2)}, \boldsymbol{\mu}^{(1)} - \boldsymbol{\mu}^{(2)}\}$. In the case when $\{\boldsymbol{\mu}^{(1)}, \boldsymbol{\mu}^{(2)}, \boldsymbol{\mu}^{(1)} - \boldsymbol{\mu}^{(2)}\} = \{\boldsymbol{0}\}$ this can be proved by completely the same arguments as in \cite{wade2015convex2}. The limiting object can be expressed in terms of the perimeter of the convex hull spanned by two independent planar Brownian motions.

More delicate situation arises when $\boldsymbol{0} \in \{\boldsymbol{\mu}^{(1)}, \boldsymbol{\mu}^{(2)}, \boldsymbol{\mu}^{(1)} - \boldsymbol{\mu}^{(2)}\} \neq \{\boldsymbol{0}\}$. In what follows we provide a simulation study that supports our conjecture, leaving the clarification of our conjecture open. In the case when $\boldsymbol{\mu}^{(1)} \neq \boldsymbol{\mu}^{(2)} = \boldsymbol{0}$, or $\boldsymbol{\mu}^{(2)} \neq \boldsymbol{\mu}^{(1)} = \boldsymbol{0}$, our intuition was that, since both walks contribute to the convex hull (see Figure \ref{fig:nul_i_nenul_drift}), distributional limit will not be Gaussian. On one hand, a single (non-degenerate) planar random walk with non-zero drift generates a convex hull whose perimeter has a Gaussian behavior (see \cite[Theorem 1.2]{wade2015convex}), but a single zero-drift planar random walk generates a convex hull whose perimeter does not have a Gaussian behavior (see \cite[Corollary 2.6 and Proposition 3.7]{wade2015convex2}), and this non-Gaussian part affects the convex hull generated by both walks combined. We ran some simulations and the results are shown in Figure \ref{fig:per_fig+100+0_+0+0}.

\begin{figure}[h!]
    \centering
    \vspace{25px}
    \begin{tikzpicture}[x=1pt,y=1pt]
\definecolor{fillColor}{RGB}{255,255,255}
%\path[use as bounding box,fill=fillColor,fill opacity=0.00] (0,0) rectangle (505.89,505.89);
\begin{scope}
%\path[clip] ( 49.20, 61.20) rectangle (480.69,456.69);
\definecolor{drawColor}{RGB}{0,0,255}

\path[draw=drawColor,line width= 0.8pt,line join=round,line cap=round] ( 95.02,302.61) --
	( 91.38,291.74) --
	(110.06,280.17) --
	(135.10,278.17) --
	(143.23,274.23) --
	(174.31,280.41) --
	(157.56,263.59) --
	(175.67,280.49) --
	(206.94,280.23) --
	(222.29,276.71) --
	(238.31,271.57) --
	(272.52,289.68) --
	(298.27,283.03) --
	(304.66,284.04) --
	(323.54,280.27) --
	(344.35,306.42) --
	(379.93,294.52) --
	(395.76,293.42) --
	(428.64,256.32) --
	(444.77,268.91) --
	(464.71,258.88);
\end{scope}
\begin{scope}
%\path[clip] ( 49.20, 61.20) rectangle (480.69,456.69);
\definecolor{drawColor}{RGB}{255,0,0}

\path[draw=drawColor,line width= 0.8pt,line join=round,line cap=round] ( 95.02,302.61) --
	( 91.04,291.15) --
	(108.08,262.53) --
	(101.49,252.06) --
	( 96.50,257.49) --
	(104.86,282.02) --
	(116.96,279.14) --
	(120.46,265.02) --
	( 98.64,246.50) --
	( 90.90,232.36) --
	( 68.72,224.35) --
	( 65.18,249.18) --
	( 92.99,241.42) --
	(104.46,229.25) --
	(111.53,229.95) --
	(107.20,227.12) --
	(111.30,211.47) --
	( 90.64,214.88) --
	( 78.26,220.28) --
	( 91.77,251.89) --
	( 92.21,268.89);
\definecolor{drawColor}{RGB}{0,0,0}

\path[draw=drawColor,line width= 0.8pt,line join=round,line cap=round] (464.71,258.88) --
	(111.30,211.47) --
	( 90.64,214.88) --
	( 68.72,224.35) --
	( 65.18,249.18) --
	( 95.02,302.61) --
	(344.35,306.42) --
	(395.76,293.42) --
	(444.77,268.91) --
	(464.71,258.88);
\end{scope}
\end{tikzpicture}
    \vspace{10px}
    \caption{The convex hull of two independet planar random walks with parameters $\boldsymbol{\mu}^{(1)} = (1.5, 0)$ (blue), $\boldsymbol{\mu}^{(2)} = (0, 0)$ (red), $\boldsymbol{\Sigma}^{(1)} = \boldsymbol{\Sigma}^{(2)} = I_2$, where $I_2$ is the two-dimensional identity matrix.}
    \label{fig:nul_i_nenul_drift}
\end{figure}
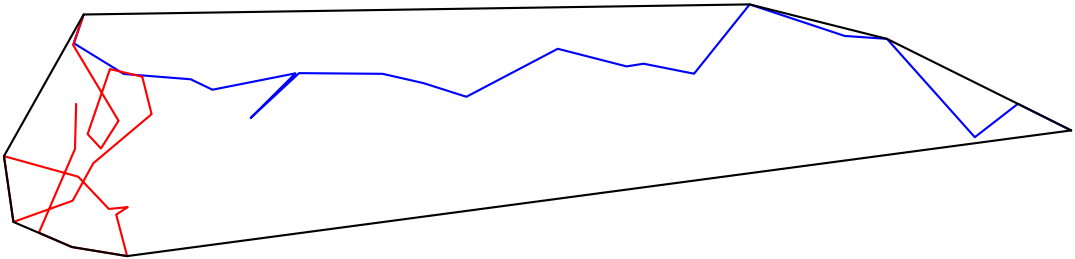

\begin{figure}[h!]
\begin{center}
    \includegraphics[width=.7\linewidth]{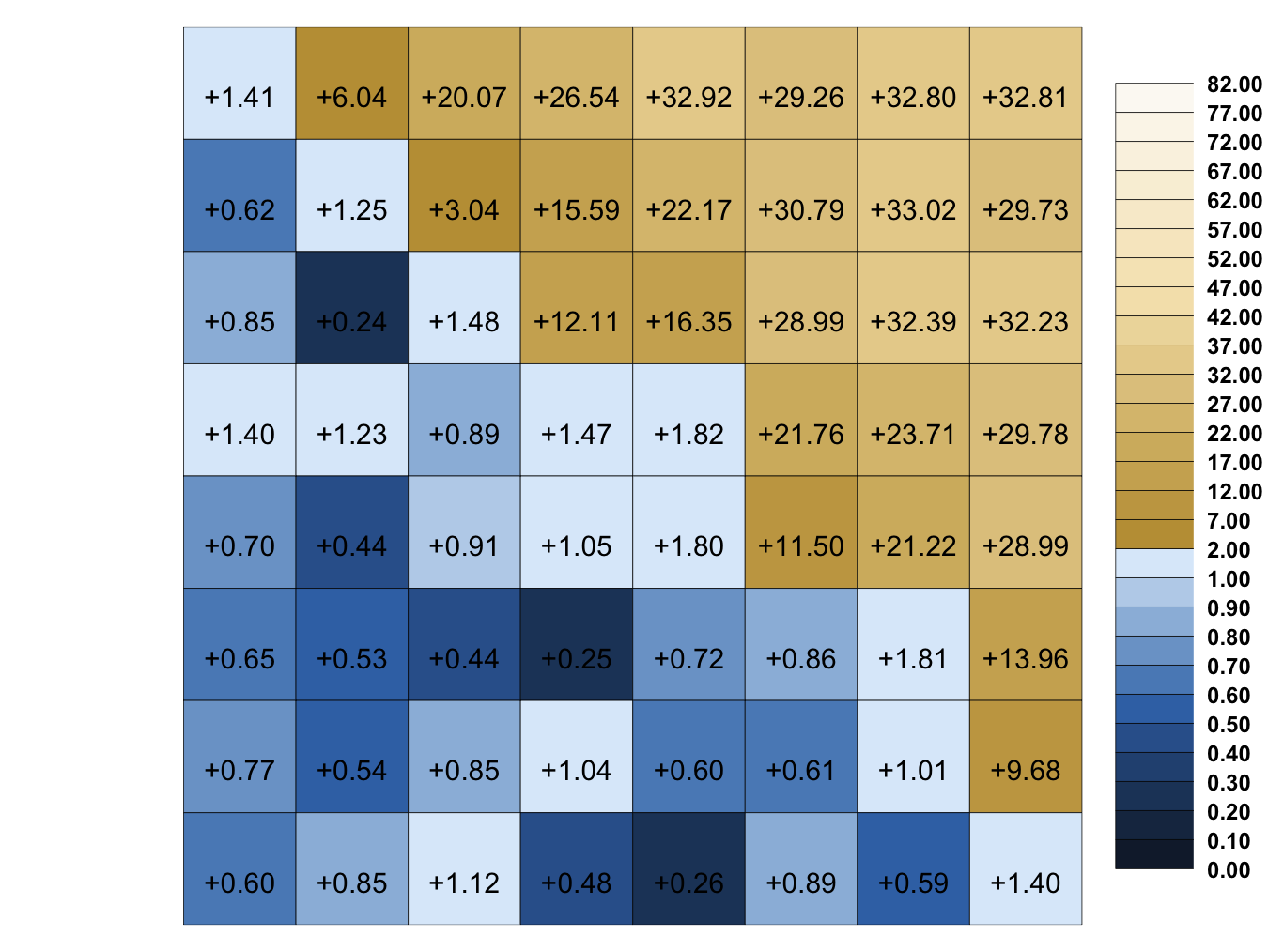}
	\caption{Simulation results for the perimeter process -- $\boldsymbol{\mu}^{(1)} = (100, 0)$, $\boldsymbol{\mu}^{(2)} = (0, 0)$.}\label{fig:per_fig+100+0_+0+0}
\end{center}
\end{figure}

Since we were simulating two planar random walks, we had some freedom in the design of our simulation study, but we kept everything as simple as possible. Namely, covariance matrices were always multiples of the identity matrix, and the steps of random walks were generated from multivariate normal distribution. To see what happens in the scenario when one of the two walks has a non-zero drift, and the other one has a zero drift (illustrated in Figure \ref{fig:nul_i_nenul_drift}), we set one drift vector to $(100, 0)$, and the other one, clearly, to $(0, 0)$. As mentioned, the covariance matrices of both walks were always of the shape $\sigma I_2$ (where $I_2$ is the two-dimensional identity matrix). We varied the value of $\sigma$ across all the elements from the set $\{0.1, 0.5, 1, 5, 10, 50, 100, 500\}$. More precisely, for every combination of $\sigma_1, \sigma_2 \in \{0.1, 0.5, 1, 5, 10, 50, 100, 500\}$ we simulated $10^3$ random walks with parameters $\boldsymbol{\mu}^{(1)} = (100, 0)$, $\boldsymbol{\Sigma}^{(1)} = \sigma_1 I_2$ and $\boldsymbol{\mu}^{(2)} = (0, 0)$, $\boldsymbol{\Sigma}^{(2)} = \sigma_2 I_2$. In each of those $10^3$ simulations, we simulated $10^4$ steps of both random walks, determined the convex hull generated by the trajectories of both walks, and then calculated the perimeter of the resulting convex hull. Hence, for each combination of values of $\sigma_1$ and $\sigma_2$ we had $10^3$ realizations of a random variable $L_n$, for $n = 10^4$. We then tested those $10^3$ realizations for normality and calculated the $p$-value. To gain additionally stability of our simulations, we repeated the procedure $5$ times and averaged all the $p$-values obtained. Since we varied the values of $\sigma_1$ and $\sigma_2$ across $8$ different values, we ended up with $8 \times 8$ matrix of averaged $p$-values. We then transformed the elements of the matrix with the mapping $x \mapsto - \log x$ so that it is easier to present the results. After this transformation, the values in the matrix that were less than or equal to $2$ corresponded to $p$-values big enough to suggest not to reject the hypothesis of Gaussian distribution. Bigger values in the matrix correspond to smaller $p$-values and point in the direction of non-Gaussian behavior. To stress this difference between values less than or equal to $2$ and values bigger than $2$, we use different color palettes for those two ranges of values. In Figure \ref{fig:per_fig+100+0_+0+0}, as in all the figures that follow, the color on the position $(i, j)$ (starting from top left corner) corresponds to the simulation in which $\sigma_1$ is equal to the $i$-th value, and $\sigma_2$ to the $j$-th value from the set $\{0.1, 0.5, 1, 5, 10, 50, 100, 500\}$. As one can see in Figure \ref{fig:per_fig+100+0_+0+0}, if the variability of the zero-drift random walk is smaller than or equal to the variability of the random walk with the non-zero drift simulations suggest not to reject the hypothesis of Gaussian distribution. However, we believe that in this case the impact of the non-Gaussian part is too small to be detected by the test. As soon as the variance of the zero-drift random walk is bigger, the simulations clearly suggest non-Gaussian behavior.

For illustration, we conduct the same experiment in the case when the assumption \eqref{eq:per_assumption} is not violated. We can see in Figure \ref{fig:A1_not_violated} that the same design of simulation study as above captures the behavior proven in Theorem \ref{theorem - CLT - perimeter}.

\begin{figure}[h!]
    \begin{center}
	\begin{subfigure}{.4\linewidth}\centering
		\includegraphics[width=\linewidth]{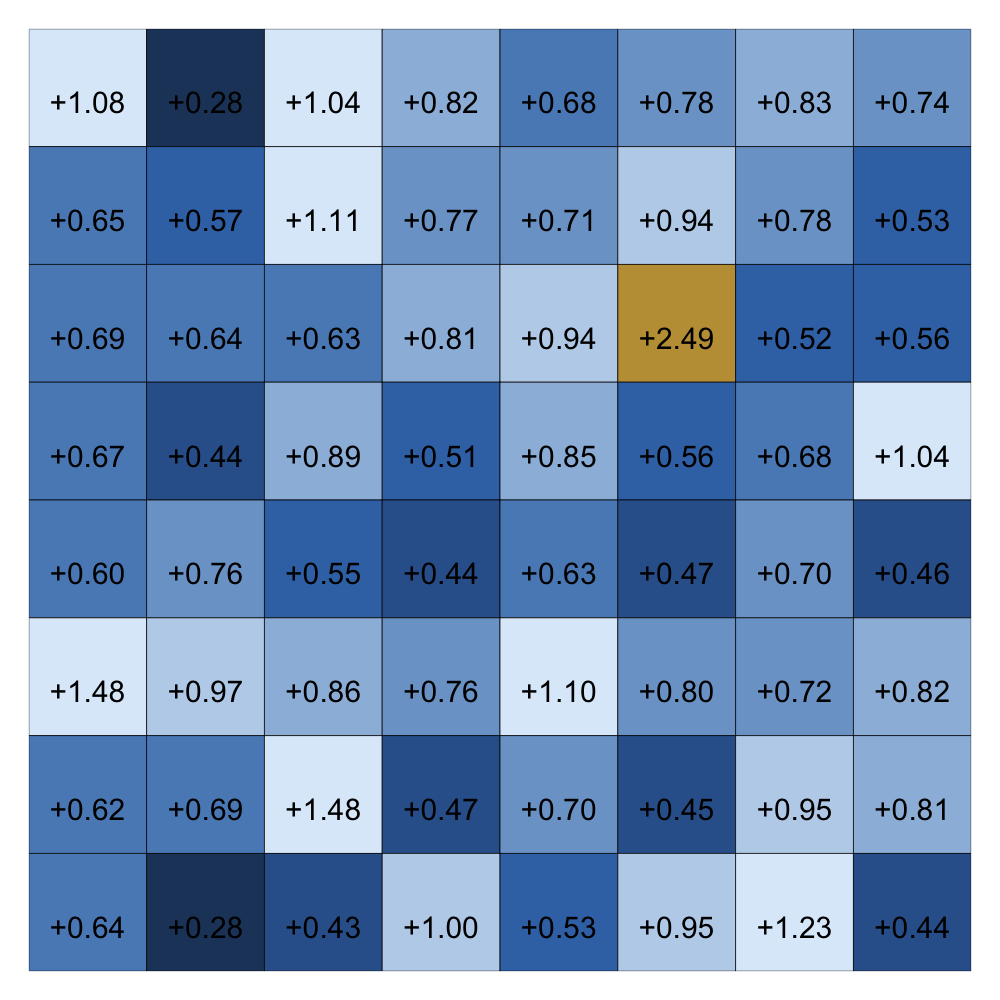}
            \caption{$\boldsymbol{\mu}^{(1)} = (200, 100)$, $\boldsymbol{\mu}^{(2)} = (-100, 100)$}
	\end{subfigure}
	\begin{subfigure}{.4\linewidth}\centering
		\includegraphics[width=\linewidth]{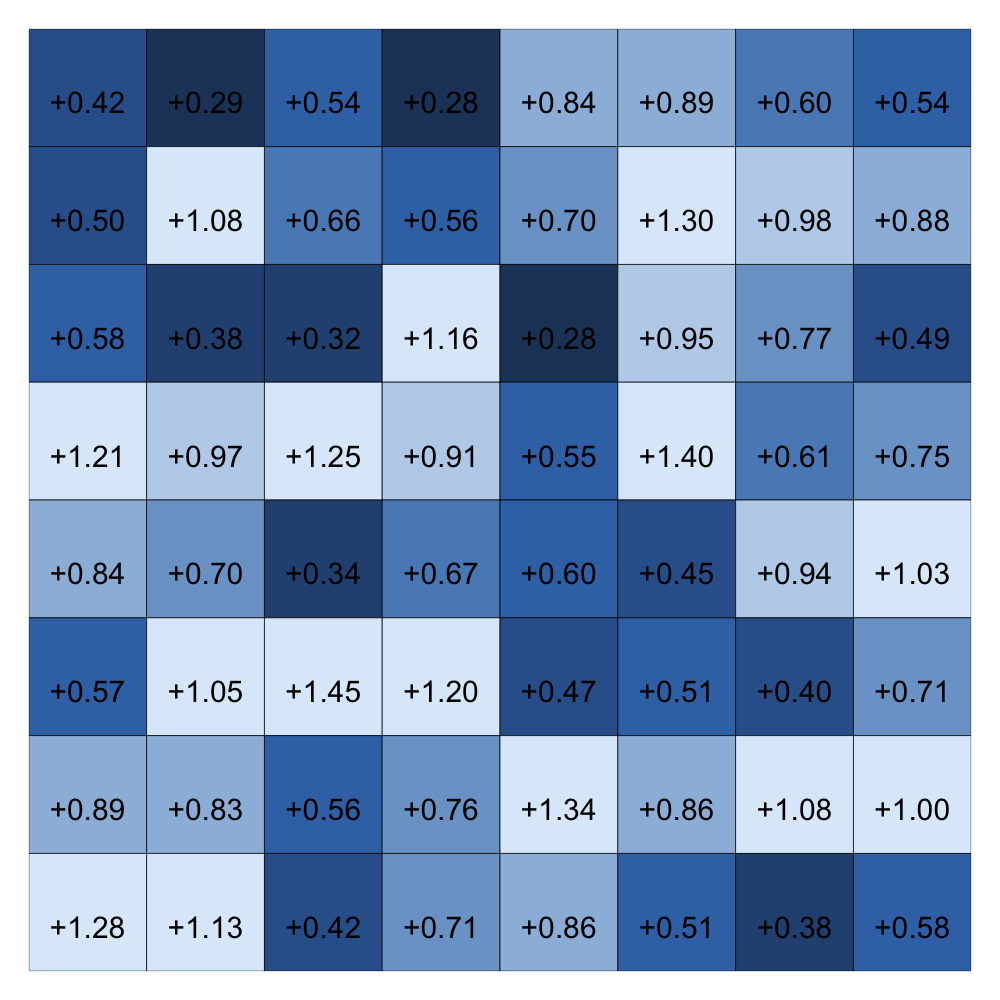}
            \caption{$\boldsymbol{\mu}^{(1)} = (200, 100)$, $\boldsymbol{\mu}^{(2)} = (100, 100)$}
	\end{subfigure}\\
	\begin{subfigure}{.4\linewidth}\centering
		\includegraphics[width=\linewidth]{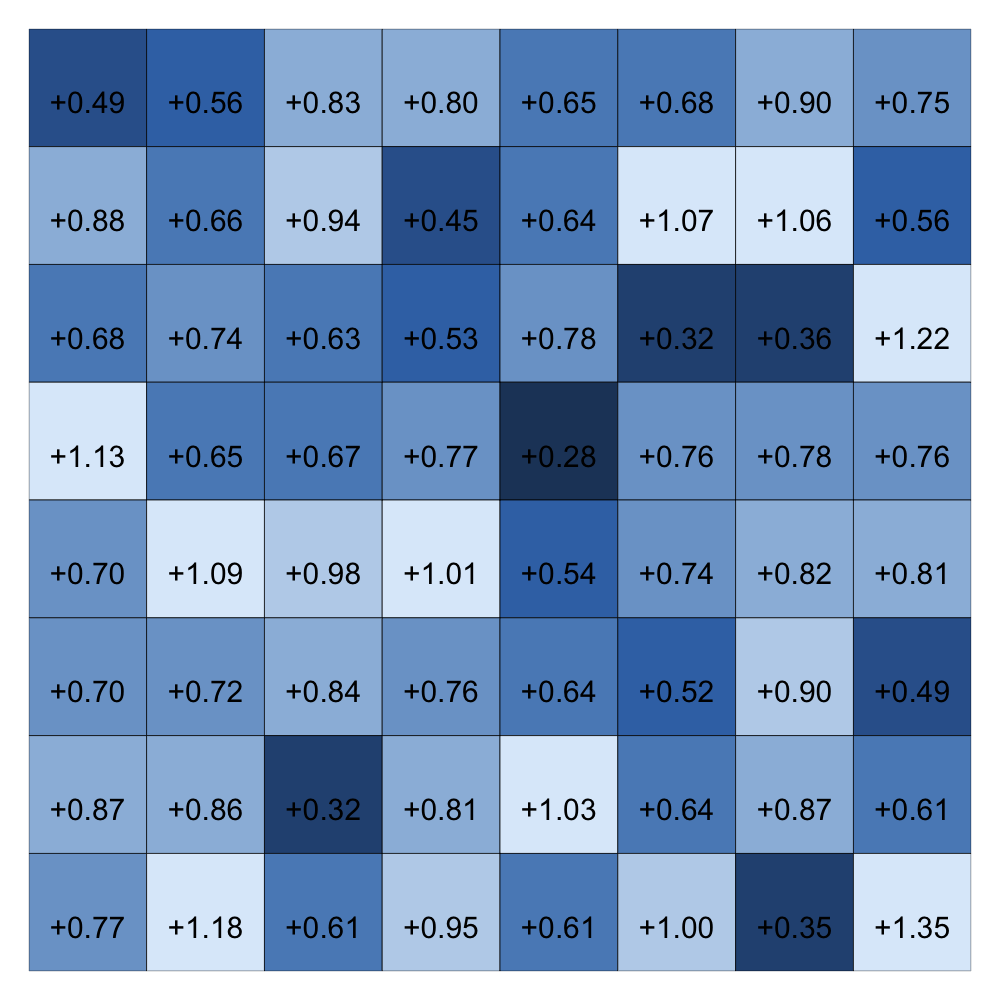}
            \caption{$\boldsymbol{\mu}^{(1)} = (200, 0)$, $\boldsymbol{\mu}^{(2)} = (-100, 0)$}
	\end{subfigure}
	\begin{subfigure}{.4\linewidth}\centering
		\includegraphics[width=\linewidth]{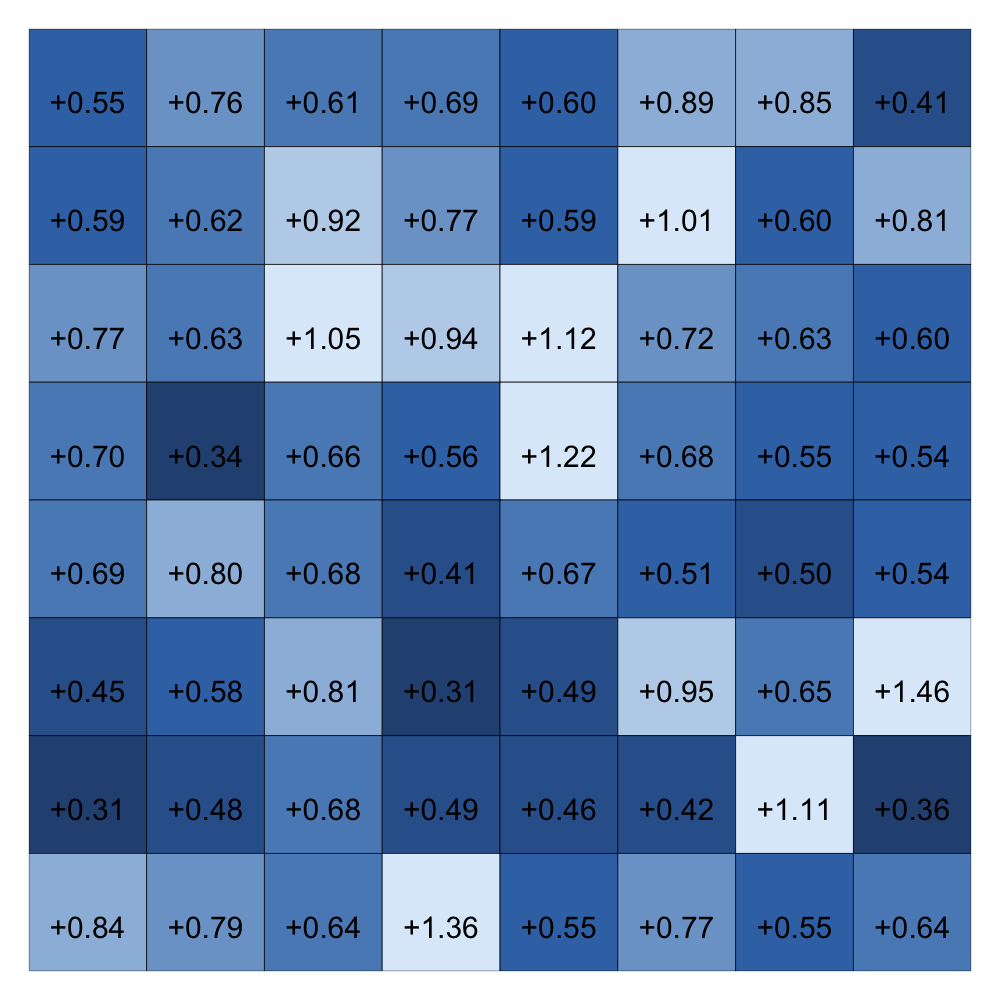}
            \caption{$\boldsymbol{\mu}^{(1)} = (200, 0)$, $\boldsymbol{\mu}^{(2)} = (100, 0)$}
	\end{subfigure}
	\caption{Simulation results (for the perimeter process) illustrating Theorem \ref{theorem - CLT - perimeter}}\label{fig:A1_not_violated}
    \end{center}
\end{figure}

The last scenario in which assumption \eqref{eq:per_assumption} is not satisfied is the one where $\boldsymbol{\mu}^{(1)} = \boldsymbol{\mu}^{(2)} \neq \boldsymbol{0}$. It was clear to us that our approach to the proof of Theorem \ref{theorem - CLT - perimeter} cannot cover this case, but our first impression was that the normality will still hold. Hence, it was somewhat surprising to us when simulations suggested that in this case we again do not have normal behavior (see Figure \ref{fig:per_fig+100+0_+100+0}). These simulation results motivated the formulation of the assumption \eqref{eq:per_assumption} in the present form. Possible justification is that one has to consider the triangle spanned by the drift vectors, and as soon as one of the three sides has length zero, the normality does not hold.

\begin{figure}[h!]
\begin{center}
    \includegraphics[width=.4\linewidth]{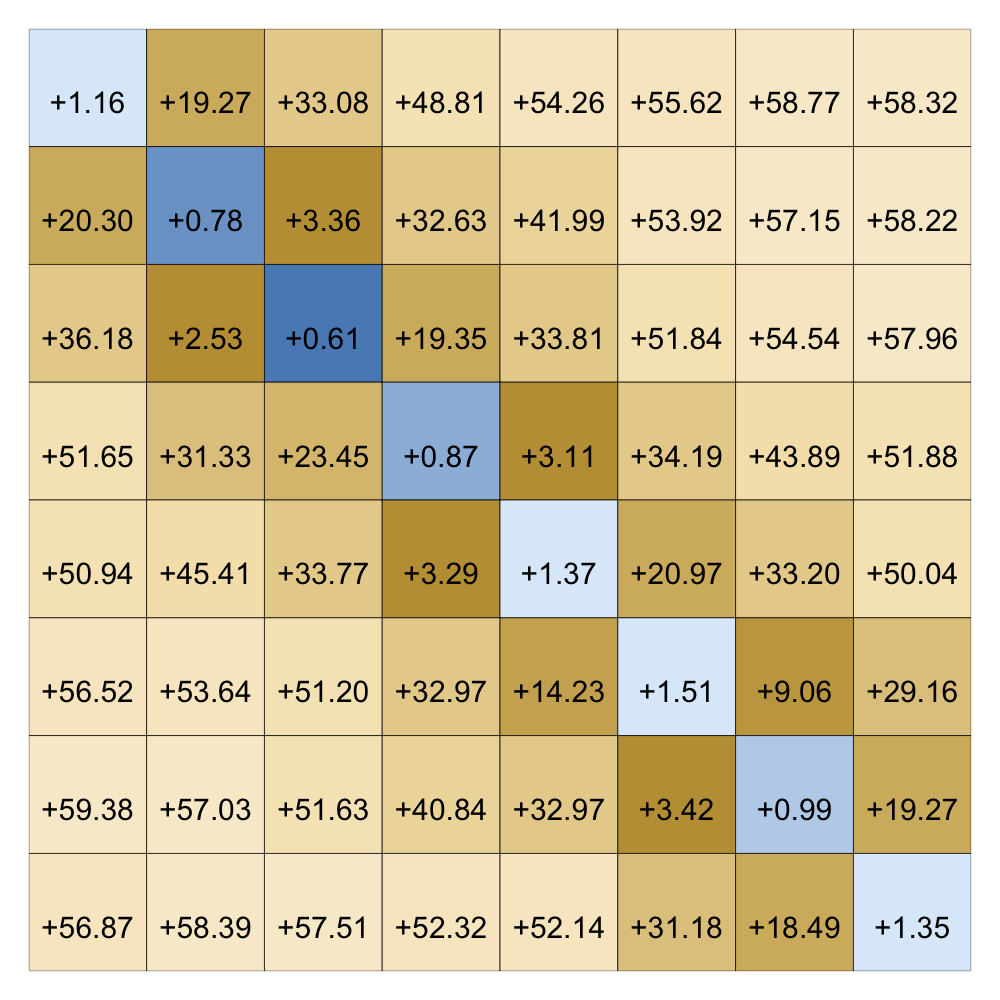}
	\caption{Simulation results for the perimeter process -- $\boldsymbol{\mu}^{(1)} = (100, 0)$, $\boldsymbol{\mu}^{(2)} = (100, 0)$.}\label{fig:per_fig+100+0_+100+0}
\end{center}
\end{figure}

When it comes to the assumption \eqref{eq:per_assumption} in the diameter case, we have completely analogous situation as above. We repeated all the experiments as above and got analogous results (see Figure \ref{fig:diam_rep}). However, in the central limit theorem for the diameter (Theorem \ref{theorem - CLT - diameter}) we have additional assumption \eqref{eq:diam_assumption}. Again, our method of proof did not work in this case, but our intuition was that the normality should still hold. We were quite surprised to see that simulations suggest non-Gaussian behavior when the set $\{\|\boldsymbol{\mu}^{(1)}\|, \|\boldsymbol{\mu}^{(2)}\|, \|\boldsymbol{\mu}^{(1)} - \boldsymbol{\mu}^{(2)}\|\}$ does not have a unique maximal element. Those simulation results are shown in Figure \ref{fig:A2_violated}.

\begin{figure}[h!]
    \begin{center}
	\begin{subfigure}{.4\linewidth}\centering
		\includegraphics[width=\linewidth]{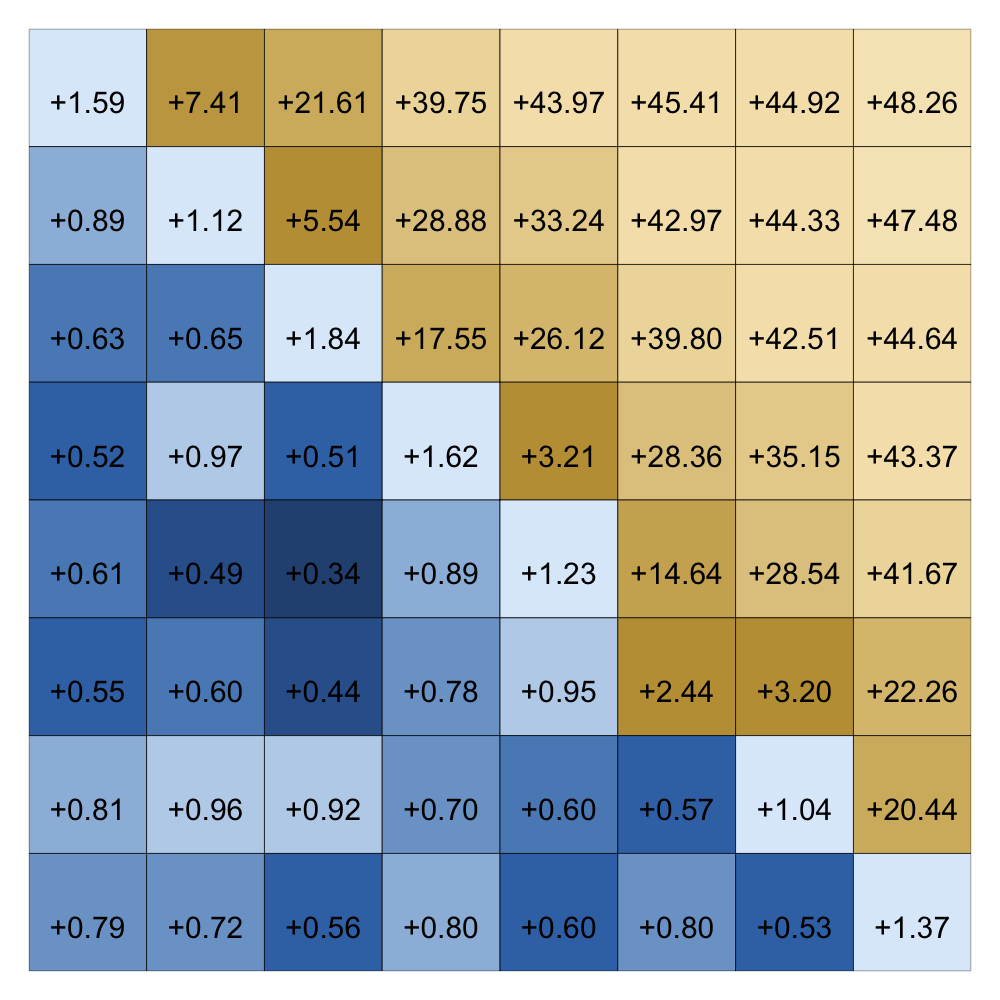}
            \caption{$\boldsymbol{\mu}^{(1)} = (100, 0)$, $\boldsymbol{\mu}^{(2)} = (0, 0)$}
	\end{subfigure}
	\begin{subfigure}{.4\linewidth}\centering
		\includegraphics[width=\linewidth]{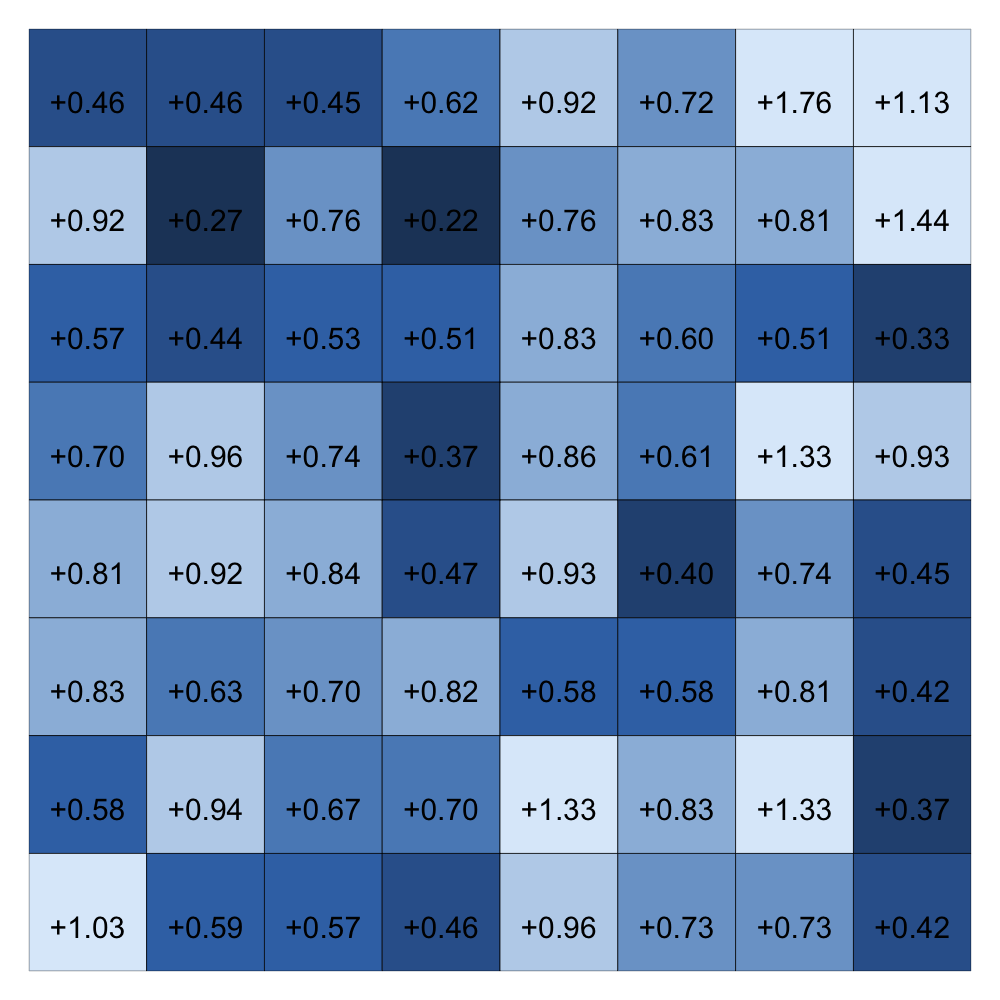}
            \caption{$\boldsymbol{\mu}^{(1)} = (200, 100)$, $\boldsymbol{\mu}^{(2)} = (-100, 100)$}
	\end{subfigure} \\
        \begin{subfigure}{.4\linewidth}\centering
		\includegraphics[width=\linewidth]{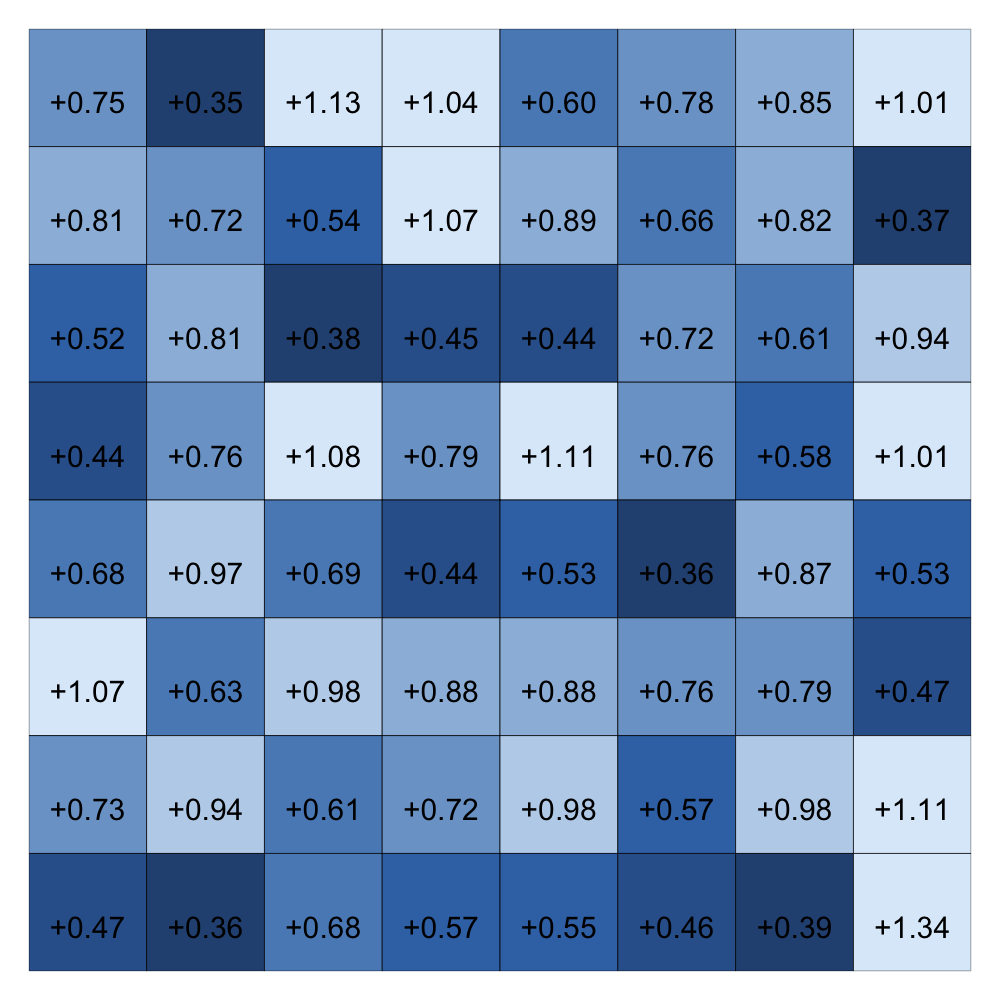}
            \caption{$\boldsymbol{\mu}^{(1)} = (200, 100)$, $\boldsymbol{\mu}^{(2)} = (100, 100)$}
	\end{subfigure}
	\begin{subfigure}{.4\linewidth}\centering
		\includegraphics[width=\linewidth]{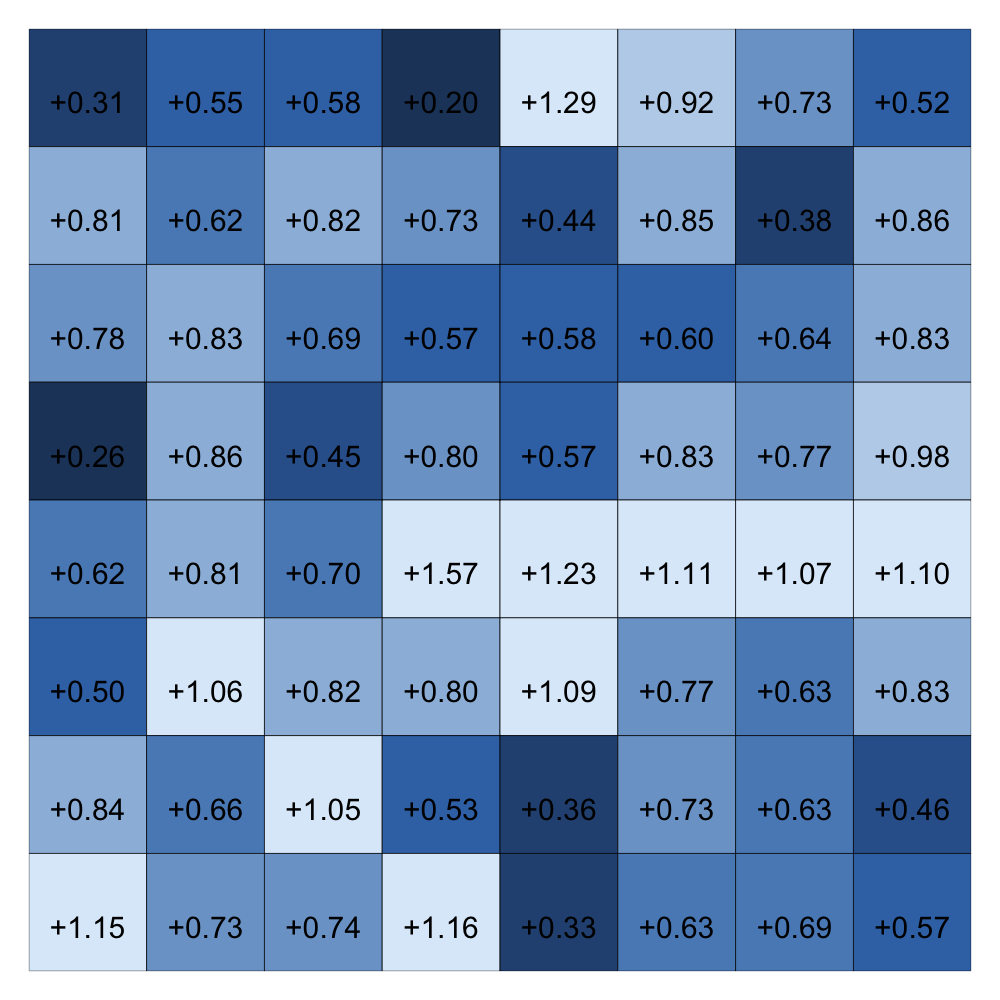}
            \caption{$\boldsymbol{\mu}^{(1)} = (200, 0)$, $\boldsymbol{\mu}^{(2)} = (-100, 0)$}
	\end{subfigure} \\
	\begin{subfigure}{.4\linewidth}\centering
		\includegraphics[width=\linewidth]{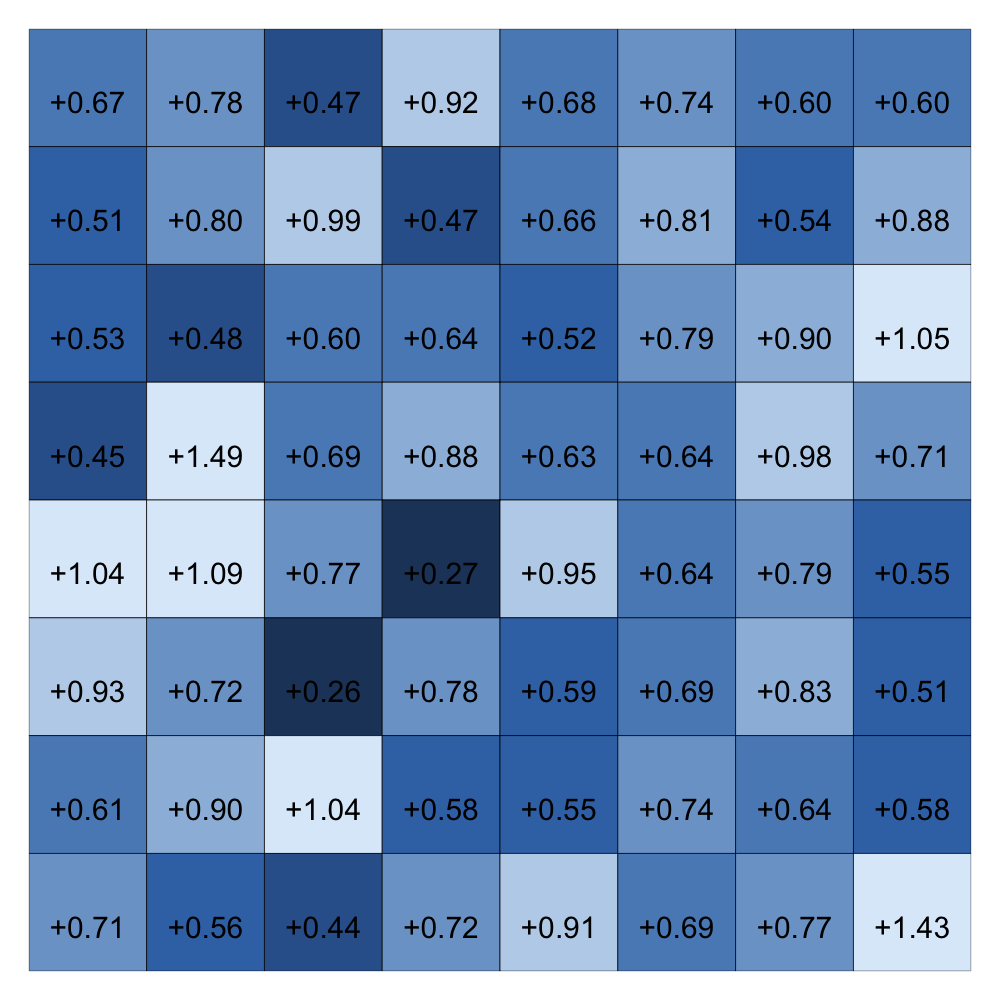}
            \caption{$\boldsymbol{\mu}^{(1)} = (200, 0)$, $\boldsymbol{\mu}^{(2)} = (100, 0)$}
	\end{subfigure}
	\begin{subfigure}{.4\linewidth}\centering
		\includegraphics[width=\linewidth]{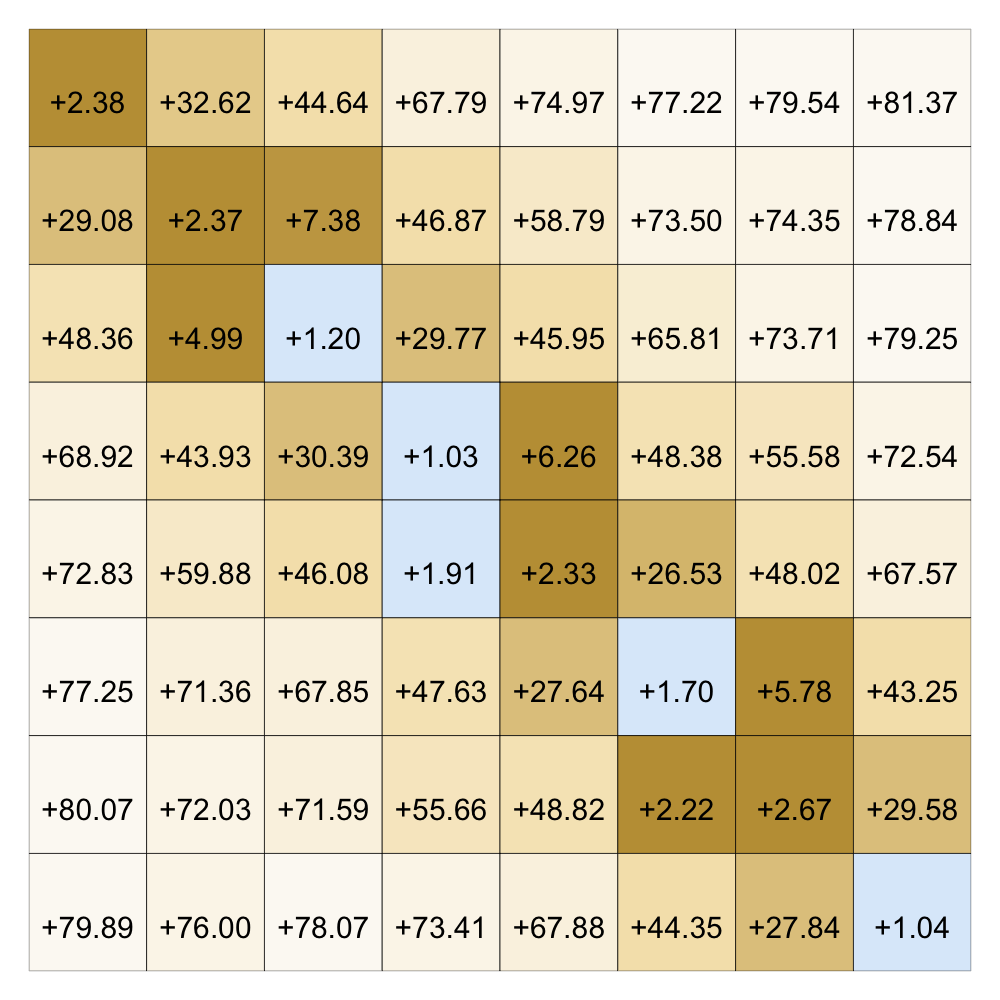}
            \caption{$\boldsymbol{\mu}^{(1)} = (100, 0)$, $\boldsymbol{\mu}^{(2)} = (100, 0)$}
	\end{subfigure}
	\caption{Simulation results for the diameter process in the same scenarios as the ones analyzed in the context of the perimeter process.}\label{fig:diam_rep}
    \end{center}
\end{figure}

\begin{figure}[h!]
    \begin{center}
	\begin{subfigure}{.4\linewidth}\centering
		\includegraphics[width=\linewidth]{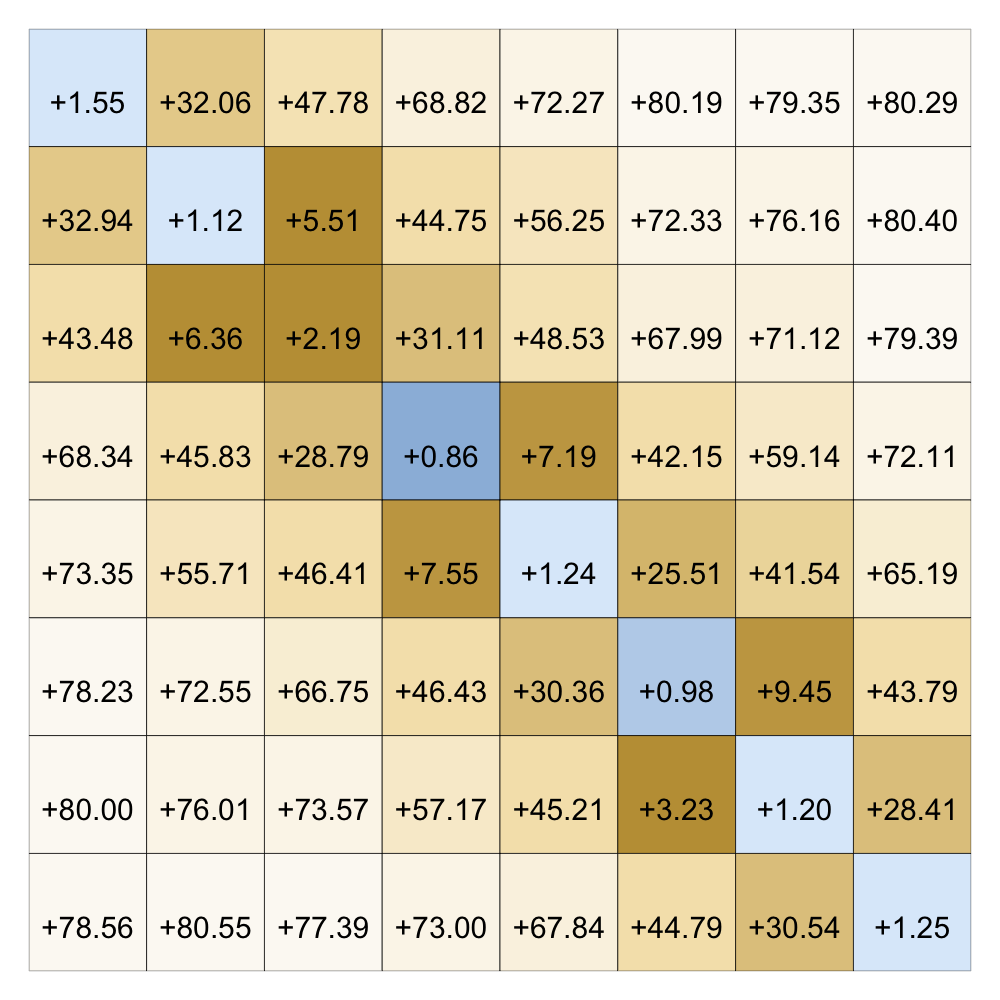}
            \caption{$\boldsymbol{\mu}^{(1)} = (100, 200)$, $\boldsymbol{\mu}^{(2)} = (-100, 200)$}
	\end{subfigure}
	\begin{subfigure}{.4\linewidth}\centering
		\includegraphics[width=\linewidth]{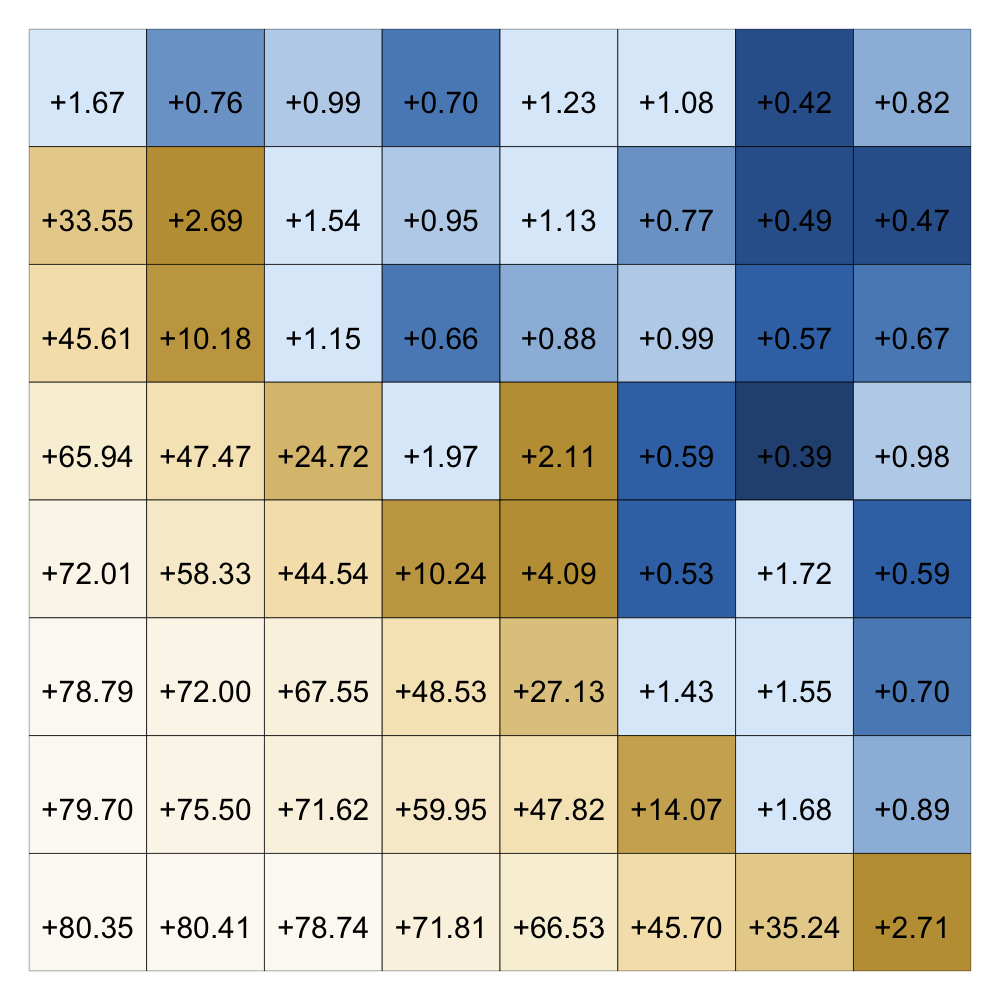}
            \caption{$\boldsymbol{\mu}^{(1)} = (0, 200)$, $\boldsymbol{\mu}^{(2)} = (200, 100)$}
	\end{subfigure}
	\caption{Simulation results (for the diameter process) in the case when assumption \eqref{eq:diam_assumption} is violated.}\label{fig:A2_violated}
 \end{center}
\end{figure}

One thing that the simulations suggest is that the variability of the walks does not change the limiting behavior of the studied processes (as long as $\sigma_L > 0$ and $\sigma_D > 0$), but it has an effect on simulations. Therefore, it could be that because of a bad simulation study design we conjectured something that does not hold. Regardless of that, it seems that scenarios excluded with assumptions \eqref{eq:per_assumption} and \eqref{eq:diam_assumption} require additional work and a different approach, and the efforts to extend our results in this direction are currently underway.

\section*{Acknowledgments}

We thank Andrew Wade (Durham University) and Wojciech Cygan (University of Wroc\l{}av) for fruitful discussions and comments. Financial support of the Croatian Science Foundation through project 2277 is gratefully acknowledged.

\noindent
\textit{Disclosure of Generative AI Tool Use}: During the preparation of this article, the authors made limited use of generative AI tools. Specifically, ChatGPT, based on GPT-4 (OpenAI, 2023–2024), was used to assist with minor language editing and to accelerate the generation of LaTeX/TikZ code for illustrative figures and simulations made in the programming language R. All outputs were reviewed and verified by the authors to ensure accuracy and scientific integrity.

\bibliography{references}

\end{document}